\documentclass[a4paper,11pt, oneside]{amsart}
\usepackage{amsmath}
\usepackage{amsthm}
\usepackage{amssymb}
\usepackage[margin=0.5in]{caption}
\usepackage{comment}
\usepackage{dsfont}
\usepackage{enumitem}
\usepackage{graphicx}
\usepackage{mathrsfs}
\usepackage{mathtools}
\usepackage{nameref}
\usepackage[new]{old-arrows}
\usepackage{seqsplit}
\usepackage{setspace}
\usepackage{stmaryrd}
\usepackage{thmtools}
\usepackage{xurl}
\usepackage{xcolor}
\usepackage{xparse}
\usepackage{pdfpages} 

\usepackage{tikz}
\usetikzlibrary{cd}
\usetikzlibrary{positioning, angles, quotes}

\theoremstyle{plain}
\newtheorem{thm}{Theorem}[section]      \newtheorem*{thm*}{Theorem}

\newtheorem{cor}[thm]{Corollary}        \newtheorem*{cor*}{Corollary}

\newtheorem{prop}[thm]{Proposition}     \newtheorem*{prop*}{Proposition}
            \newtheorem*{lem*}{Lemma}

\newtheorem{claim}[thm]{Claim}          \newtheorem*{claim*}{Claim}
        \newtheorem*{exer*}{Exercise}
\newtheorem{q}[thm]{Question}           \newtheorem*{q*}{Question}
\newtheorem{prob}[thm]{Problem}           \newtheorem*{prob*}{Problem}
\newtheorem{conj}[thm]{Conjecture}      \newtheorem*{conj*}{Conjecture}

\theoremstyle{definition}
\newtheorem{defn}[thm]{Definition}      \newtheorem*{defn*}{Definition}

           \newtheorem*{ex*}{Example}
    \newtheorem*{notation*}{Notation}
          \newtheorem*{setup*}{Setup}
		\newtheorem*{const*}{Construction}

\theoremstyle{remark}
\newtheorem{rem}[thm]{Remark}           \newtheorem*{rem*}{Remark}
      \newtheorem*{conv*}{Conventions}

\newtheoremstyle{iremark}
    {0.5em\topsep}   
    {0.5em\topsep}   
    {\upshape}  
    {0pt}       
    {\itshape}  
    {.}         
    {5pt plus 1pt minus 1pt} 
    {\thmname{#1}\thmnumber{ \itshape#2}\thmnote{ (#3)}} 
\theoremstyle{iremark}

\theoremstyle{plain}

\usepackage[hidelinks]{hyperref}

\usepackage[capitalise]{cleveref}  

\Crefname{claim}{Claim}{Claims}
\Crefname{conj}{Conjecture}{Conjectures}
\Crefname{const}{Construction}{Constructions}
\Crefname{cor}{Corollary}{Corollaries}
\Crefname{defn}{Definition}{Definitions}
\Crefname{ex}{Example}{Examples}
\Crefname{lem}{Lemma}{Lemmas}
\Crefname{prop}{Proposition}{Propositions}
\Crefname{q}{Question}{Questions}
\Crefname{prob}{Problem}{Problems}
\Crefname{rem}{Remark}{Remarks}
\Crefname{thm}{Theorem}{Theorems}
\Crefname{manualtheoreminner}{Theorem}{Theorems}

\newcommand{\VN}{\mathcal{N}}
\newcommand{\UO}{\mathcal{U}}

\newcommand{\C}{\mathbb{C}}

\newcommand{\Q}{\mathbb{Q}}
\newcommand{\R}{\mathbb{R}}
\newcommand{\Z}{\mathbb{Z}}

\DeclareMathOperator{\cd}{cd}

\DeclareMathOperator{\Ext}{Ext}

\DeclareMathOperator{\Hom}{Hom}
\DeclareMathOperator{\id}{id}

\DeclareMathOperator{\im}{im}

\DeclareMathOperator{\Mat}{Mat}

\DeclareMathOperator{\Tor}{Tor}

\newcommand{\vertii}[1]{{\left\vert\kern-0.25ex\left\vert #1 \right\vert\kern-0.25ex\right\vert}}
\newcommand{\vertiii}[1]{{\left\vert\kern-0.25ex\left\vert\kern-0.25ex\left\vert #1 \right\vert\kern-0.25ex\right\vert\kern-0.25ex\right\vert}}

\makeatletter
\newsavebox{\@brx}
\newcommand{\llangle}[1][]{\savebox{\@brx}{\(\m@th{#1\langle}\)}%
  \mathopen{\copy\@brx\mkern2mu\kern-0.9\wd\@brx\usebox{\@brx}}}
\newcommand{\rrangle}[1][]{\savebox{\@brx}{\(\m@th{#1\rangle}\)}%
  \mathclose{\copy\@brx\mkern2mu\kern-0.9\wd\@brx\usebox{\@brx}}}
\makeatother

\newcounter{comments}

\title[Simon's knot genus problem and Lewin \texorpdfstring{$3$}{3}-manifold groups]{Simon's knot genus problem and Lewin \texorpdfstring{$3$}{3}-manifold groups}

\author{Pablo S\'{a}nchez-Peralta}
\address[P. S\'{a}nchez-Peralta]{Emmanuel College, University of Cambridge, St Andrew's Street, Cambridge CB2~3AP, United Kingdom}
\email{ps2089@cam.ac.uk}

\begin{document}

\begin{abstract}
	We provide a positive answer to an old problem of Jonathan K. Simon: if $K$ and $K'$ are two knots such that there is an epimorphism from the knot group of $K$ to the knot group of $K'$, then the genus of $K$ is greater than or equal to the genus of $K'$. This result follows from our proof of a conjecture of Friedl and L\"uck. Specifically, we show that any map between admissible $3$-manifolds that induces an epimorphism on the fundamental groups and an isomorphism on the rational homologies yields an inequality of Thurston norms. Under these conditions, we also establish new upper bounds on the number of faces of the Thurston norm ball of the target manifold.
	
	To resolve Friedl and L\"uck's conjecture, we prove that locally indicable $3$-manifold groups are Lewin groups, thereby confirming a conjecture of Jaikin-Zapirain within the class of $3$-manifold groups. As a further consequence of our methods, we show that the crossed product of a division ring and a torsion-free $3$-manifold group that is virtually free-by-cyclic is a pseudo-Sylvester domain. 
\end{abstract}

\maketitle

\section{Introduction} \label{sec: introd}

Let $K$ be a knot in the $3$-sphere $S^3$ and let $X_K$ be the exterior $S^3 \setminus \upsilon(K)$ of $K$, where $\upsilon(K)$ is an open tubular neighborhood of $K$. We write $G_K$ for the knot group of $K$, that is, the fundamental group $\pi_1(X_K)$. For two knots $K$ and $K'$, if there exists a group epimorphism $G_K \twoheadrightarrow G_{K'}$, then it is natural to expect that $K$ is more complicated than $K'$ in some sense. For instance, it was shown in \cite{GordonLuecke_Knots} that if $K$ and $K'$ are prime knots and the epimorphism is an actual isomorphism, then $K$ and $K'$ are equivalentent, meaning that there is a homeomorphism $h \colon S^3 \to S^3$ such that $h(K) = K'$. Moreover, Hempel proved in \cite{Hempel_RF3mflds} that knot groups are residually finite, and hence Hopfian, so (in the prime case) $G_{K'}$ cannot surject on $G_K$ unless $K$ and $K'$ are equivalent. However, if $n(G)$ denotes the minimum number of (meridian) generators, the inequality $n(G_K) > n(G_{K'})$ proposed in \cite[Problem 1.12(A)]{Kirby_Problems} does not hold in general as the group of the torus knot $(3p,2)$, $p$ odd, maps onto the trefoil knot group \cite{HartleyMurasugi_Meridians}.  In Kirby's problem list \cite{Kirby_Problems}, Jonathan K. Simon posed the following seminal problem in the 1970s concerning the genus of knots, denoted as $g(K)$.

\begin{prob}[{\cite[Problem 1.12(B)]{Kirby_Problems}}] \label{prob: Simon}
	Let $K$ and $K'$ be two knots. If there is a group epimorphism from the knot group $G_K$ to the knot group $G_{K'}$, does this imply that $g(K)$ is greater than or equal to $g(K')$?
\end{prob}

As pointed out in \cite[p. 278]{Kirby_Problems}, it follows from classical results that the inequality holds if the degree of the Alexander polynomial of $K'$ is twice the genus of $K'$; in particular, the inequality holds if $K'$ is fibered (though this forces the Alexander polynomial to be monic). In \cite[Corollary 6.22]{Gabai_Foliations} and \cite[Theorem 8.8]{Gabai_FoliationsIII}, Gabai gave an affirmative answer when the epimorphism is induced by a proper map of non-zero degree between the knot exteriors. In \cite{BoileauKitanoNozaki_Genera}, Boileau, Kitano and Nozaki provided a positive answer for a symmetric union and its partial knot. Central to our work are the connections established by Friedl and L\"uck in \cite{FriedlLuck_euler} between $L^2$-Betti numbers and the Thurston norm, which in turn extends the genus of a knot. With this framework, they solved \cref{prob: Simon} when the knot group $G_{K'}$ is residually-\{locally indicable and elementary amenable\} (which includes the case when $K'$ is fibered \cite[Lemma 7.5]{FriedlLuck_euler}). Let us recall the definition of the Thurston norm as in \cite{Thurston_Norm}.

Given a surface $F$ with connected components $F_1, \ldots, F_n$, we define 
\[
\chi_{-}(F) := \sum_{i = 1}^n \max\{ -\chi(F_i), 0\}.
\]
Let $M$ be a connected, orientable, compact $3$-manifold $M$ with empty or non-empty boundary, and let $\phi \in H^1(M; \Z) = \Hom(\pi_1(M), \Z)$. The \emph{Thurston norm} $x_M(\phi)$ is defined as
\[
x_M(\phi) := \min \{ \chi_{-}(F) : \mbox{ $F \subset M$ properly embedded surface dual to $\phi$} \}.
\]
If $K \subseteq S^3$ is a knot, we have $H^1(X_K; \Z) \cong \Z$, and a routinary exercise shows that for any generator $\phi \in H^1(X_K; \Z)$ it holds
\[
	x_{X_K}(\phi) = \max \{ 2g(K) - 1, 0\}.
\]
Thurston \cite{Thurston_Norm} showed that this function is homogeneous and subadditive, that is, for $\phi, \phi_1, \phi_2 \in H^1(M; \Z)$ and $n \in \Z$ it holds that
\[
x_M(n \phi) = \lvert n \rvert x_M(\phi) \quad \mbox{and} \quad x_M(\phi_1+\phi_2) \leq x_M(\phi_1) + x_M(\phi_2).
\]
These properties imply that  $x_M$  extends to a function on $H^{1}(M; \mathbb{Q})$ which can then be extended by continuity to a seminorm on $H^{1}(M; \mathbb{R})$ which we denote by $x_M$ again.

Following Friedl and L\"uck (see \cite[Definition 0.1]{FriedlLuck_euler}), we say that a $3$-manifold $M$ is \emph{admissible} if $M$ is connected, orientable, compact and irreducible, its boundary is empty or a disjoint union of tori, and its fundamental group $\pi_1(M)$ is inﬁnite.  In this paper we prove the following conjecture of Friedl and L\"uck \cite[Conjecture 0.5]{FriedlLuck_euler}.

\begin{thm} \label{thm: ineq_thurston3}
	Let $f \colon M \to N$ be a map between admissible $3$-manifolds which induces an epimorphism on the fundamental groups and an isomorphism $f_{*} \colon H_n(M; \Q) \to H_n(N; \Q)$ for every non-negative integer $n$. Then for every $\phi \in H^1(N; \R)$ it holds that
	\[
	 	 x_M(f^{*} \phi ) \geq x_N(\phi)
	\]
	where $f^{*} \phi$ is the pullback of $\phi$ with $f$.
\end{thm}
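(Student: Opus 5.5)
The plan is to follow the Friedl–Lück strategy through $L^2$-Betti numbers. The abstract indicates that the missing ingredient is that locally indicable $3$-manifold groups are Lewin groups. Write $G=\pi_1(M)$, $H=\pi_1(N)$ and $\alpha=f_*\colon G\twoheadrightarrow H$. For $\phi\in H^1(N;\Z)$, Friedl and Lück show that the Thurston norm is computed by a $\phi$-twisted $L^2$-Euler characteristic:
\[
x_N(\phi)=-\chi^{(2)}(\widetilde N;\phi),
\]
and the same holds for $M$ with $f^*\phi=\phi\circ\alpha$. The statement is homogeneous and continuous in $\phi$, so it suffices to treat integral classes.

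The $\phi$-twisted $L^2$-Betti numbers can be reformulated algebraically, as in Jaikin-Zapirain's work. Let $\mathcal D_{H}$ be the Linnell division ring of $\Q[H]$. It contains $\mathcal D_{\ker\phi}$, and $\mathcal D_H$ is an Ore localisation of the twisted polynomial ring $\mathcal D_{\ker\phi}[t^{\pm1}]$. The twisted Euler characteristic is then the alternating sum of the ranks over $\mathcal D_{\ker\phi}$ of the $\mathcal D_{\ker\phi}[t^{\pm1}]$-homology of $\widetilde N$; equivalently, it is the alternating sum of degrees of the Dieudonné determinants of the chain boundary maps. The aim is to compare the chain complexes $C_*(\widetilde M)\otimes_{\Q G}\mathcal D_H$, obtained via $\alpha$, and $C_*(\widetilde N)\otimes_{\Q H}\mathcal D_H$.

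First, the map $f$ gives a chain map between these complexes. The second input is that $H_*(M;\Q)\cong H_*(N;\Q)$ together with surjectivity of $\alpha$ should make this map a $\mathcal D_H$-homology isomorphism, or at least give the correct rank inequality. This is exactly where the Lewin property enters. $H$ is locally indicable: when $b_1(N)\ge 1$, admissible $3$-manifold groups with positive first Betti number are locally indicable. The theorem asserts that $H$ is then Lewin, meaning that $\mathcal D_{\Q[H]}$ is the universal division ring of fractions of $\Q[H]$, and by Jaikin-Zapirain it is also universal for Sylvester rank functions. Universality means that any matrix over $\Q[H]$ that becomes invertible over some epic division ring, or that has full rank in the $\Q$-homology sense via the augmentation, is invertible over $\mathcal D_H$.

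Concretely, rational homology equivalence implies that the relative chain complex of $f$ (its mapping cone) becomes acyclic after $\otimes\Q$. A Strebel / Cohn-type argument would then show that it is acyclic over $\mathcal D_H$; this is the step I expect to be hardest. I would first try Lewin's property in the Sylvester-matrix-rank form: $\operatorname{rk}_{\mathcal D_H}$ is the maximal Sylvester rank on $\Q[H]$, while the augmentation rank is some rank function, so the inequality runs in the right direction. One must also handle passing from $G$ to $H$ through the epimorphism, using locally indicable and Lewin for $H$ only.

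Given $\mathcal D_H$-acyclicity of the cone, the $\mathcal D_{\ker\phi}$-rank Euler characteristics agree for the pulled-back class: the Euler characteristic of $C_*(\widetilde M)\otimes\mathcal D_H$ computed via $\alpha$ equals $-x_N(\phi)$. Finally, compare with $x_M(f^*\phi)$. It is computed with $\mathcal D_G$ instead of $\mathcal D_H$, and the needed inequality is $-\chi^{(2)}(\widetilde M;f^*\phi)\ge -\chi^{(2)}(M\to H;\phi)$. This follows from monotonicity of ranks under the epimorphism $\Q[G]\to\Q[H]$: Lewin/universality of $\mathcal D_G$ gives $\operatorname{rk}_{\mathcal D_G}\ge\operatorname{rk}_{\mathcal D_H\circ\alpha}$ on matrices, so in low degrees the $L^2$-Betti numbers over $G$ are at most those over $H$. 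Since only $b_1$ contributes (using $b_0=b_2=0$-type vanishing for admissible manifolds with $\phi\neq0$), this yields $x_M(f^*\phi)\ge x_N(\phi)$. The case $b_1(N)=0$ is vacuous.
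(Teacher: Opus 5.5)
Your cone argument is sound. It is a clean alternative to the paper's augmentation-ideal computation. Universality of $\mathcal{D}_{\pi_1(N)}$ makes its matrix ranks dominate those over $\Q$, so the $\Q$-acyclic cone of $\Q[\pi_1(N)]\otimes_{\Q[\pi_1(M)]}C_*(\widetilde M)\to C_*(\widetilde N)$ is $\mathcal{D}_{\pi_1(N)}$-acyclic. Together with $L^2$-acyclicity of $N$, this gives $b_1^{\mathcal{D}_{\pi_1(N)}}(\pi_1(M))=0$, which is the paper's key vanishing.

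\textbf{First gap: the claimed equality of Euler characteristics is false.} You claim that, from this, the $\phi$-twisted Euler characteristic of $M$ via $f_*$ equals $-x_N(\phi)$. That invariant is a $\mathcal{D}_{\ker\phi}$-dimension Euler characteristic over $\mathcal{D}_{\ker\phi}[t^{\pm1}]$, essentially the $\phi$-degree of a Dieudonn\'e determinant. $\mathcal{D}_{\pi_1(N)}$-acyclicity of the cone only says that the cone's homology over $\mathcal{D}_{\ker\phi}[t^{\pm1}]$ is torsion. It does \emph{not} say that this torsion has zero $\mathcal{D}_{\ker\phi}$-dimension.

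The claim already fails for the epimorphism $G_{3_1\#3_1}\to G_{3_1}$, taken as the identity on one summand and abelianisation on the other. A Mayer--Vietoris count along the meridional annulus gives $\chi^{(2)}(M;f_*,\phi)=(-1)+(-1)-1=-3$, while $x_N(\phi)=1$. What holds is only the inequality $x_N(\phi)=b_1^{(2)}(\ker\phi)\le b_1^{\mathcal{D}_{\ker\phi}}(\ker f^*\phi)$. It comes from the surjection $\Q[\ker\phi]\otimes_{\Q[\ker f^*\phi]}I_{\Q[\ker f^*\phi]}\twoheadrightarrow I_{\Q[\ker\phi]}$. This is where surjectivity on $\pi_1$ is needed, and your argument never uses it.

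\textbf{Second gap: your final step runs in the wrong direction.} Universality makes ranks over the Linnell ring of $\pi_1(M)$ (respectively $\ker f^*\phi$) maximal, hence its Betti numbers minimal. As you yourself write, Betti numbers over $\pi_1(M)$ are at most those over $\pi_1(N)$. So this monotonicity can only give $x_M(f^*\phi)\le-\chi^{(2)}(M;f_*,\phi)$, the reverse of what you need. Combined with your claimed equality, it would even give $x_M(f^*\phi)\le x_N(\phi)$.

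The bound you actually need, $b_1^{\mathcal{D}_{\ker\phi}}(\ker f^*\phi)\le b_1^{(2)}(\ker f^*\phi)=x_M(f^*\phi)$, is not ring-theoretic. It is Friedl--L\"uck's topological estimate $-\chi^{(2)}(M;\mu,\phi)\le x_M(\phi\circ\mu)$, from their Theorems 4.1 and 5.5, obtained from a norm-minimising surface dual to $f^*\phi$. Its hypothesis is precisely the vanishing $b_1^{\mathcal{D}_{\pi_1(N)}}(\pi_1(M))=0$ that your cone argument supplies.

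If you replace your two claims by these two inequalities, you get the paper's chain $x_N(\phi)\le b_1^{\mathcal{D}_{\ker\phi}}(\ker f^*\phi)\le x_M(f^*\phi)$.
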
 

We record that the rational homology condition of \cref{thm: ineq_thurston3} cannot be dropped as shown by Liu's example in \cite[Remark 0.6 (3)]{FriedlLuck_euler}, which exhibits a map $f$ from $M = S^1 \times \Sigma$ with $\Sigma$ a surface of genus $g \geq 2$ with boundary, to $N$ the exterior of a non-trivial torus knot, such that the pullback $f^{*} \phi$ of a generator $\phi$ of $H^1(N;\Z)$ is dual to the ``vertical tori'' in $S^1 \times \Sigma$. As a corollary to \cref{thm: ineq_thurston3}, we give a positive answer to \cref{prob: Simon} in full generality.

\begin{cor} \label{cor: Simon}
	Let $K$ and $K'$ be two knots. If $G_K$ surjects onto $G_{K'}$, then $g(K) \geq g(K')$.
\end{cor}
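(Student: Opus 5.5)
We will deduce \cref{cor: Simon} from \cref{thm: ineq_thurston3}, applied to $M = X_K$ and $N = X_{K'}$. First we dispose of degenerate cases. If $K'$ is the unknot, then $g(K') = 0$ and there is nothing to prove. If $K$ is the unknot, then $G_K \cong \Z$ and its quotient $G_{K'}$ is abelian. Since the abelianization of a knot group is $\Z$, we get $G_{K'} \cong \Z$, so $K'$ is trivial by Dehn's lemma. Hence we may assume both knots are non-trivial. Knot exteriors are then irreducible (Alexander's theorem), compact, orientable, with torus boundary, and have infinite fundamental group. Thus $X_K$ and $X_{K'}$ are admissible.

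Next we realize the epimorphism $\varphi \colon G_K \twoheadrightarrow G_{K'}$ by a map. Since $X_{K'}$ is irreducible with infinite fundamental group, it is aspherical, so $X_{K'}$ is a $K(G_{K'},1)$. The CW complex $X_K$ therefore admits a map $f \colon X_K \to X_{K'}$ with $f_* = \varphi$ on $\pi_1$ (up to basepoint conventions).

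We then check the rational homology hypothesis. Both spaces are connected, so $H_0$ is fine. For $H_1$, the abelianizations are both $\Z$, and $\varphi$ induces a surjection $\Z \to \Z$, which is an isomorphism. For $n \geq 2$, both exteriors have the rational homology of a circle: $H_2(X_K;\Q) = 0$ by Alexander duality, and $H_n = 0$ for $n \geq 3$ because these are $3$-manifolds with non-empty boundary. So $f_*$ is an isomorphism in every degree.

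Now apply \cref{thm: ineq_thurston3} with $\phi$ a generator of $H^1(X_{K'};\Z) \cong \Z$. Because $f$ induces an isomorphism on $H_1(-;\Z)$ modulo torsion (here $\Z \to \Z$ is onto, hence bijective), $f^*\phi$ is a generator of $H^1(X_K;\Z)$. Using the formula $x_{X_K}(\phi) = \max\{2g(K)-1,0\}$ on each side, the theorem gives
\[
\max\{2g(K)-1,0\} \geq \max\{2g(K')-1,0\}.
\]
Since $g(K') \geq 1$, the right-hand side equals $2g(K')-1 \geq 1$. Hence the left-hand side is at least $1$, so it equals $2g(K)-1$, and therefore $g(K) \geq g(K')$.

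The only step needing care is the degenerate unknot case, where admissibility fails. This is handled above via $\Z$ having only abelian quotients together with the unknotting criterion.
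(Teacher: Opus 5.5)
Your proof is correct and follows the same route the paper intends: realize the epimorphism by a map $f\colon X_K \to X_{K'}$, check that both exteriors have the rational homology of a circle and that $f^*\phi$ is a generator, apply \cref{thm: ineq_thurston3}, and read off the genus from $x_{X_K}(\phi)=\max\{2g(K)-1,0\}$.

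One small correction: the unknot exterior is a solid torus, which is irreducible with infinite fundamental group $\Z$, so it is admissible. Your remark that admissibility fails for the unknot is therefore wrong. Your separate treatment of trivial knots is harmless but unnecessary, since the theorem already covers them.
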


In their breakthrough paper, Agol and Liu \cite{AgolLiu_Simon} showed that every knot group maps onto at most finitely many knot groups resolving \cite[Problems 1.12(C) \& (D)]{Kirby_Problems} conjectured by Simon. \cref{cor: Simon} settles part (B), thereby fully resolving Problem 1.12 from Kirby's problem list \cite{Kirby_Problems}. Observe that one can define a preorder on knots where $K \geq K'$ if there is an epimorphism from $G_K$ to $G_{K'}$. It follows then that, for this preorder, each knot has finitely many descendants all of which have smaller or equal genus.

For a compact and orientable $3$-manifold $M$ the \emph{Thurston norm ball} is defined by
\[
	B_{x_M} :=\{ \phi \in H^1(M;\R) \colon x_M(\phi) \leq 1 \}
\]
and the \emph{dual Thurston polytope} is given by
\[
T(M)^{*} := B_{x_M}^{*} = \{ p \in H_1(M; \R) \colon \phi(p) \leq x_M(\phi) \mbox{ for all } \phi \in H^1(M; \R) \}.
\]
Thurston \cite[Theorem 2, p. 106 and first paragraph, p. 107]{Thurston_Norm} showed that $T(M)^{*}$ is integral, that is, the vertices lie in $H_1(M)_{\text{f}} \subseteq H_1(M;\R)$. Define $F_{x_M}$ as the number of integral points contained in $T(M)^{*}$. As a byproduct of \cref{thm: ineq_thurston3} we obtain that $F_{x_M}$ is an upper for the number of top-dimensional faces of the Thurston norm ball of the target manifold.

\begin{cor} \label{cor: Thurston_ball}
	Let $M$ be an admissible $3$-manifold. Then for every admissible $3$-manifold $N$ admitting a map $f \colon M \to N$ which induces an epimorphism on the fundamental groups and an isomorphism $f_{*} \colon H_n(M; \Q) \to H_n(N; \Q)$ for every non-negative integer $n$, the number of top-dimensional faces of $B_{x_N}$ is bounded by $F_{x_M}$.
\end{cor}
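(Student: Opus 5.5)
Since $f_*$ is an isomorphism on $H_1(-;\Q)$, the dual map $f^*\colon H^1(N;\R)\to H^1(M;\R)$ is a linear isomorphism. By \cref{thm: ineq_thurston3}, for every $\phi\in H^1(N;\R)$ we have
\[
x_M(f^*\phi)\ \geq\ x_N(\phi).
\]
We exploit this through the dual polytopes. Let $f_*\colon H_1(M;\R)\to H_1(N;\R)$ be the induced isomorphism, which maps the lattice $H_1(M)_{\text{f}}$ into $H_1(N)_{\text{f}}$. The plan is to show that every vertex of $T(N)^*$ is the image under $f_*$ of an integral point of $T(M)^*$. Top-dimensional faces of $B_{x_N}$ correspond bijectively to vertices of the dual polytope $T(N)^*$, so the number of such faces would then be at most $F_{x_M}$.

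\textbf{Step 1: polytope inclusion.} First we show that $T(N)^*\subseteq f_*(T(M)^*)$. Take $p\in T(N)^*$ and set $q=f_*^{-1}(p)\in H_1(M;\R)$. For any $\psi\in H^1(M;\R)$, write $\psi=f^*\phi$ with $\phi\in H^1(N;\R)$. Then
\[
\psi(q)=\phi(f_*q)=\phi(p)\leq x_N(\phi)\leq x_M(f^*\phi)=x_M(\psi),
\]
so $q\in T(M)^*$.

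\textbf{Step 2: integrality of preimages.} This is the main obstacle. A vertex $v$ of $T(N)^*$ lies in $H_1(N)_{\text{f}}$ by Thurston's integrality, but $f_*^{-1}(v)$ need only lie in $H_1(M;\Q)$. The index of $f_*(H_1(M)_{\text{f}})$ in $H_1(N)_{\text{f}}$ is finite, yet it could a priori be nontrivial. Surjectivity on $\pi_1$ resolves this: it gives surjectivity of $H_1(M;\Z)\to H_1(N;\Z)$, hence of $H_1(M)_{\text{f}}\to H_1(N)_{\text{f}}$. Since $f_*$ is also injective on free parts (it is rationally an isomorphism), it is a lattice isomorphism. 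Therefore $f_*^{-1}(v)$ is an integral point of $T(M)^*$.

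\textbf{Step 3: counting.} Distinct vertices of $T(N)^*$ have distinct preimages under $f_*$, and each preimage is an integral point of $T(M)^*$. Hence the number of vertices of $T(N)^*$, which equals the number of top-dimensional faces of $B_{x_N}$, is at most $F_{x_M}$.

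A degenerate case needs separate care: $x_N$ may be only a seminorm, so that $B_{x_N}$ is unbounded. In that case $T(N)^*$ lies in the annihilator of the kernel of $x_N$. The top-dimensional faces of $B_{x_N}$ then still correspond to the vertices of $T(N)^*$, so the same count applies verbatim.
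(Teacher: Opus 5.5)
Your proof is correct and follows the paper's route: derive $T(N)^*\subseteq f_*(T(M)^*)$ from \cref{thm: ineq_thurston3}, then use Thurston's integrality and the fact that $f_*$ restricts to a lattice isomorphism $H_1(M)_{\text{f}}\to H_1(N)_{\text{f}}$ to inject the vertices of $T(N)^*$ into the integral points of $T(M)^*$. The differences are minor: you prove the inclusion directly from the definition of $T(M)^*$, using that $f^*$ is an isomorphism, where the paper uses the max formula and hyperplane separation; you also spell out why $\pi_1$-surjectivity gives the lattice isomorphism, which the paper leaves implicit.
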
 

Now we give some remarks on the proof of \cref{thm: ineq_thurston3}. In \cite{FriedlLuck_euler}, Friedl and L\"uck showed that for any admissible $3$-manifold $N$ and any surjective $\phi \in H^1(N; \Z)$
\[
	x_N(\phi) = b_1^{(2)}(\ker \phi).
\]
This reduces \cref{thm: ineq_thurston3} to showing that $b_1^{(2)}(\ker \phi) \leq b_1^{(2)}(\ker f^{*}\phi)$. Friedl and L\"uck proved the latter inequality whenever the fundamental group of the target $3$-manifold is residually-\{locally indicable and elementary amenable\} (see \cite[Theorem 7.3]{FriedlLuck_euler} for the precise statement). For a general admissible $3$-manifold $N$, let $G$ denote $\pi_1(N)$, if $\phi$ is non-trivial we only know that $G$ is locally indicable (see \cref{cor: admissible_Lewin}). In this case, the group algebra $\Q[G]$ embeds into the Linnell division ring $\mathcal{D}_{\Q[G]}$ by \cite[Theorem 3.2(3)]{FriedlLuck_euler} (see \cref{sec: L2Betti} for more details). Thus, given an arbitrary finitely presented $\Q[G]$-module $L$, we have two different notions of dimension of $L$, namely
\[
	\dim_{\Q} \left( \Q \otimes_{\Q[G]} L \right) \quad \mbox{and} \quad \dim_{\mathcal{D}_{\Q[G]}} \left( \mathcal{D}_{\Q[G]} \otimes_{\Q[G]} L \right)
\]
where $\dim_{\mathcal{D}_{\Q[G]}}$ is just the usual rank of a module over a division ring. In this module theoretic language, Friedl and L\"uck showed that when $G$ is in addition residually-\{locally indicable and elementary amenable\}, then for every finitely generated free $\Q[G]$-complex $C_{*}$
\begin{equation} \label{eqtn: ineq_intro}
	\dim_{\mathcal{D}_{\Q[G]}} \left( H_n( \mathcal{D}_{\Q[G]} \otimes_{\Q[G]} C_{*})\right) = 0 \mbox{ if } \dim_{\Q} \left( H_n(\Q \otimes_{\Q[G]} C_{*})\right) = 0,
\end{equation}
and derived that $b_1^{(2)}(\ker \phi) \leq b_1^{(2)}(\ker f^{*}\phi)$. The main step in our proof is showing that for any (locally indicable) $3$-manifold group we have the following inequality (\cref{thm: 3mfld_Lewin})
\[
	\dim_{\mathcal{D}_{\Q[G]}} \left( \mathcal{D}_{\Q[G]} \otimes_{\Q[G]} L \right) \leq \dim_{\Q} \left( \Q \otimes_{\Q[G]} L \right),
\]
which implies that \labelcref{eqtn: ineq_intro} holds. In other words, \cref{thm: ineq_thurston3} is reduced to a problem on universal division ring embeddings.

\subsection{Lewin groups}

A ring homomorphism $\varphi$ from $R$ to a division ring $\mathcal{D}$ induces a dimension function on finitely presented $R$-modules, namely, $\dim_{\mathcal{D}}(\mathcal{D} \otimes_R L)$ for any finitely presented $R$-module $L$. A division $R$-ring $\varphi \colon R \to \mathcal{D}$ is called \emph{epic} if the division closure of $\im \varphi$ equals to $\mathcal{D}$. It is further called \emph{division $R$-ring of fractions} if $\varphi$ is injective. Cohn introduced the notion of universal division $R$-ring (see, for instance, \cite[Section 7.2]{cohn06FIR}). An epic division $R$-ring $\mathcal{D}$ is \emph{universal} if for every division $R$-ring $\mathcal{E}$, $\dim_{\mathcal{D}}(\mathcal{D} \otimes_R L) \leq \dim_{\mathcal{E}}(\mathcal{E} \otimes_R L)$ for every finitely presented $R$-module $L$. If it exists, the universal epic division $R$-ring of fractions is unique up to $R$-isomorphism \cite[Theorem 7.2.7]{cohn06FIR}. 

Jaikin-Zapirain \cite[Proposition 4.1]{JaikinZapirain2020THEUO} showed that the group algebra $\Q[G]$ does not have a universal division ring of fractions if $G$ is not locally indicable. In this paper, we study whether the group algebra $\Q[G]$ of a $3$-manifold group $G$, or more generally a crossed product $E * G$ where $E$ is a division ring, has a universal division ring of fractions. Thus, it is natural to consider the case where $G$ is a locally indicable $3$-manifold group.

Let $E$ be a division ring, $G$ a locally indicable group and $E * G$ a crossed product. Hughes \cite{HughesDivRings1970} introduced a freeness property for a given epic division $E * G$-ring $\mathcal{D}$ (see \cref{sec: R-rings}), and showed that if $E * G$ admits such embedding, then it is unique up to $E * G$-ring isomorphism. In this situation, we thus speak of \emph{the Hughes-free division ring of $E * G$}, denote it by $\mathcal{D}_{E * G}$, and view $E * G$ as a subring of $\mathcal{D}_{E * G}$. If for every division ring $E$ and every crossed product $E * G$ exists the Hughes-free division ring for $E * G$, we say that $G$ is \emph{Hughes-free embeddable}.

A locally indicable group $G$ is a \emph{Lewin group} if it is Hughes-free embeddable and for all possible crossed products $E * G$, where $E$ is a division ring, $\mathcal{D}_{E * G}$ is universal. In \cite[Conjecture 1]{JaikinZapirain2020THEUO}, Jaikin-Zapirain conjectures that every locally indicable group is a Lewin group. He showed that locally indicable amenable groups, residually-\{torsion-free nilpotent\} groups and free-by-cyclic groups are Lewin groups. In this paper we confirm the conjecture within the realm of $3$-manifold groups.

\begin{thm} \label{thm: 3mfld_Lewin}
	Locally indicable $3$-manifold groups are Lewin groups.
\end{thm}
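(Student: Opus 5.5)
The strategy is to reduce to a small set of building blocks whose Lewin property is known, and to show the property survives the operations that assemble a general $3$-manifold group. Two closure properties are needed. (a) The Lewin property passes to subgroups. If $H \le G$ and $E*H \subseteq E*G \subseteq \mathcal{D}_{E*G}$, then the division closure of $E*H$ in $\mathcal{D}_{E*G}$ is $\mathcal{D}_{E*H}$. Universality should then restrict via the free-module decomposition $E*G = \bigoplus_{t} (E*H)t$. (b) The Lewin property passes to free products and, more generally, to graphs of groups whose edge groups are well behaved (trivial or cyclic) and whose vertex groups are Lewin. For free products I would use Cohn's theory of coproducts of division rings (the universal field of fractions of a free ring product). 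For amalgamation I would use a Mayer--Vietoris argument comparing $\dim_{\mathcal{D}}$ with Sylvester rank functions.

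With these in hand, the geometric reduction goes as follows. A finitely generated $3$-manifold group is, by Scott's core theorem, the fundamental group of a compact $3$-manifold. It is locally indicable and hence torsion-free, so by the Kneser--Milnor prime decomposition it is a free product of fundamental groups of prime manifolds, with no $\Z/n$ or closed $S^3/\Gamma$ factors surviving. Being Lewin is a local property: universality concerns finitely presented modules, each defined over a finitely generated subgroup, and Hughes-free embeddings are compatible with directed unions. Hence it suffices to treat finitely generated groups, and by (b) only prime pieces. For a prime compact piece $M$ with $\pi_1(M)$ locally indicable, I would split into three cases:
\begin{itemize}
\item $M$ has non-empty boundary that is not all tori, or $\pi_1(M)$ is otherwise not from a closed or toroidal-boundary manifold. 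Compressing the boundary gives a graph of groups over trivial edge groups with pieces of the next two types, or free groups, handled by (b).
\item $M$ is geometric and not hyperbolic (Seifert fibred, Sol, Nil). These groups are either virtually solvable, hence amenable and locally indicable, so Lewin by Jaikin-Zapirain, or virtually a central extension of a surface group. They should be residually torsion-free nilpotent or free-by-cyclic up to finite index.
\item $M$ is hyperbolic, or more generally has a non-trivial JSJ decomposition. By Agol, Wise, Przytycki--Wise and Liu, $\pi_1(M)$ is virtually special, and in many cases virtually fibred, hence virtually free-by-cyclic or surface-by-cyclic.
\end{itemize}

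The main obstacle is that the results of Agol, Wise and their collaborators only give virtual statements. Lewin-ness of a finite-index subgroup $H$ does not obviously pass up to $G$. One must show that $\mathcal{D}_{E*G}$, which is the Ore-type crossed product $\mathcal{D}_{E*H} * (G/H)$ when $H$ is normal of finite index, is still universal for $E*G$. I would attack this by proving a finite-extension theorem. Let $\mathcal{D}_{E*H}$ be universal and let $\mathcal{D}_{E*G}$ exist, which follows from local indicability of $G$ together with Linnell/Jaikin-Zapirain--López-Álvarez for crossed products. For a finitely presented $E*G$-module $L$ and a division $E*G$-ring $\mathcal{E}$, restrict to $E*H$. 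Since $E*G$ is free of rank $[G:H]$ over $E*H$, dimensions multiply by $[G:H]$, which reduces the inequality to the $H$-case. The delicate point is that $\mathcal{E}$ is only an $E*H$-ring inside a matrix ring over a division ring, so one must work with Sylvester matrix rank functions rather than division rings. The statement to establish is thus that universality holds in the sense of being the maximal Sylvester rank function among those of ``division type'' on $E*H$, and that the induced rank of a finite-index restriction is again of this type. Using the fact that $\mathcal{D}_{E*H}$'s rank is the minimal one, I would invoke Jaikin-Zapirain's characterization of Lewin groups via $\mathrm{rk}_{\mathcal{D}}$ being the smallest Sylvester matrix rank function with values in $\tfrac{1}{n}\Z$-type constraints. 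Checking that this characterization is preserved under finite extensions, given local indicability of $G$, is where I expect the real work to lie. Once it is done, for a virtually free-by-cyclic $G$ we also get that $E*G$ is pseudo-Sylvester, as the abstract claims.
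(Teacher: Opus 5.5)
Your proposal leaves the decisive step unproved, and the reduction you sketch for it fails. Restricting to a finite-index normal subgroup $H$ multiplies dimensions by $[G:H]$ only for the Hughes-free ring, because $\mathcal{D}_{E*G}\cong\mathcal{D}_{E*H}\otimes_{E*H}E*G$. For an arbitrary division $E*G$-ring $\mathcal{E}$ there is no such relation. The inequality your reduction needs, $\dim_{\mathcal{E}}(\mathcal{E}\otimes_{E*H}L)\le[G:H]\dim_{\mathcal{E}}(\mathcal{E}\otimes_{E*G}L)$, is false in general. Take $G=\langle t\rangle\cong\Z$, $H=\langle t^2\rangle$, $\mathcal{E}=\Q$ with $t\mapsto -1$, and $L=\Q$ the trivial module. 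Then $\mathcal{E}\otimes_{\Q[G]}L=0$, since $t-1\mapsto -2$ is invertible. But $t^2\mapsto 1$ gives $\mathcal{E}\otimes_{\Q[H]}L\cong\Q$. So universality of $\mathcal{D}_{E*H}$ only yields the bound $1/2$ where $0$ is required; the true value is $0$, but restriction cannot see it. Repairing this means controlling the rank functions on $E*G$ induced through $E*G\to\Mat_{[G:H]}(\mathcal{E})$, which is exactly the ascent from a finite-index subgroup that you defer as ``the real work''. (Incidentally, universality corresponds to the \emph{maximal} rank, not the minimal one.) Two further inputs are assumed rather than proved. First, that $\mathcal{D}_{E*H}*(G/H)$ is a division ring. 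Second, that $\mathcal{D}_{E*G}$ exists at all: Jaikin-Zapirain--L\'opez-\'Alvarez covers $K[G]$ with $K\subseteq\C$, not arbitrary division rings $E$ and crossed products. The paper proves no finite-extension theorem. It obtains $\mathcal{D}_{E*G}=\mathcal{D}_{E*H}*G/H$ from the Fisher--S\'anchez-Peralta criterion, using goodness and the factorization property of $H$. It then shows directly that $E*G$ is a pseudo-Sylvester domain via the Henneke--L\'opez-\'Alvarez criterion. Shapiro's lemma transfers Tor-vanishing and weak dimension at most $1$ from $H$, and coherence of $\mathcal{D}_{E*G}$ comes from that of $\mathcal{D}_{E*H}$. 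With global dimension $2$, this makes the relation module of any finitely generated submodule of $\mathcal{D}_{E*G}$ finitely generated projective. The $K$-theoretic Farrell--Jones conjecture then makes it stably free.

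Your geometric reduction is also more fragile than necessary. It needs closure of Lewin groups under graphs of groups with cyclic edge groups, for which you give no argument. It also forces you to treat closed aspherical pieces of cohomological dimension $3$ case by case, and closed graph manifolds that are not virtually fibred fall outside all three of your cases. The paper avoids this with one trick. A finitely generated locally indicable $G\cong F*(*_i\pi_1(M_i))$ maps onto $\Z$ nontrivially on every aspherical factor. The kernel is the fundamental group of an infinite cyclic cover and has cohomological dimension at most $2$ by Bass--Serre theory. Hence it is locally virtually free-by-cyclic by Kielak--Linton. Jaikin-Zapirain's Theorem 3.7 then lets the Lewin property pass up through the $\Z$-extension and from finitely generated subgroups. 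So the only virtual step ever needed is the cohomological-dimension-$2$ case above, and no free product or amalgam closure is required.
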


Prominent examples of rings having universal division ring of fractions are pseudo-Sylvester domains (see \cref{sec: pseudoSyl} for the precise definition). In the case of crossed products $E * G$ these come, for instance, from free and free-by-$\Z$ groups \cite{Cohn_Universality,HennekeLopez_pseudoSylvester}. In \cite{JaikinSouza_Sylvesterprop}, Jaikin-Zapirain and Souza proved that the completed group algebra of a torsion-free finitely generated virtually free-by-$\Z_p$ pro-$p$ group is a Sylvester domain. We offer its $3$-manifold group analog, from which \cref{thm: 3mfld_Lewin} will follow and which we believe is of independent interest. A group $G$ is free-by-cyclic if it fits into a short exact sequence 
\[
	1 \to F \to G \to Z \to 1
\]
where $F$ is free and $Z$ is cyclic. Throughout this article, we do not require the free kernel of free-by-cyclic groups to be finitely generated.

\begin{thm}\label{thm: univ_vFbyZ}
	Let $E$ be a division ring, $G$ a torsion-free $3$-manifold group that is virtually free-by-cyclic, and $E * G$ a crossed product. Then $E * G$ is a pseudo-Sylvester domain. Moreover, the universal division $E*G$-ring of fractions is Hughes-free. In particular, $G$ is a Lewin group.
\end{thm}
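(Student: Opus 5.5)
We must prove that if $G$ is a torsion-free $3$-manifold group that is virtually free-by-cyclic and $E*G$ is a crossed product with $E$ a division ring, then $E*G$ is a pseudo-Sylvester domain, its universal division ring of fractions is Hughes-free, and hence $G$ is a Lewin group. The plan is to pass from the finite-index free-by-cyclic subgroup, where the result is known, up to $G$. Let $H \le G$ be a finite-index normal subgroup that is free-by-cyclic, $1 \to F \to H \to Z \to 1$; we may take $Z$ infinite cyclic, since $G$ is infinite. Then $E*G = (E*H)*(G/H)$ is a crossed product of $E*H$ with a finite group.

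\textbf{Step 1.} For $E*H$, write $E*H = (E*F)*\Z$. The ring $E*F$ is a crossed product of a division ring with a free group, so it is a fir, and Cohn's results make it a Sylvester domain. By Henneke--L\'opez \cite{HennekeLopez_pseudoSylvester}, a skew Laurent extension of a pseudo-Sylvester domain whose universal division ring is compatible with the automorphism is again pseudo-Sylvester. Hence $E*H$ is a pseudo-Sylvester domain. Its universal division ring of fractions is $\mathcal{D}_{E*H}$, which is Hughes-free by Jaikin-Zapirain's result for free-by-cyclic groups.

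\textbf{Step 2.} Lift to $G$. Because $\mathcal{D}_{E*H}$ is Hughes-free, conjugation by elements of $G$ extends to it by uniqueness. So we can form $\mathcal{D}_{E*H}*(G/H)$, a finite-dimensional crossed product over a division ring; it is artinian and contains $E*G$. I would then use Linnell's/Kropholler--Linnell--Moody style arguments together with torsion-freeness of $G$ to show it is a division ring, equal to $\mathcal{D}_{E*G}$. Here one needs $G$ to be locally indicable, which holds for torsion-free $3$-manifold groups that are virtually free-by-cyclic. Alternatively, one can show that $\mathcal{D}_{E*H}*(G/H)$ is the Ore localization of $E*G$ at nonzero elements of $E*H$. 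This gives Hughes-freeness of the division $E*G$-ring.

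\textbf{Step 3.} Show $E*G$ is pseudo-Sylvester. A ring is pseudo-Sylvester iff it has a universal division ring of fractions $\mathcal{D}$ with $\mathrm{Tor}_1^{R}(\mathcal{D},\mathcal{D})=0$; equivalently, $\mathcal{D}$ has weak dimension $\le 1$ over $R$, i.e.\ stable rank condition plus the law of nullity. For this:
\begin{itemize}
\item Restriction along the finite free extension $E*H \subseteq E*G$ multiplies inner ranks by $|G:H|$.
\item A full matrix over $E*G$ restricts to a matrix over $E*H$ that is stably full.
\item Stably full matrices over $E*H$ invert in $\mathcal{D}_{E*H}$, hence in $\mathcal{D}_{E*G}$.
\end{itemize}
This shows $\mathcal{D}_{E*G}$ is the universal localization, i.e.\ universal. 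Tor vanishing transfers since $\mathcal{D}_{E*G}$ is free of finite rank over $\mathcal{D}_{E*H}$ and $E*G$ is free over $E*H$:
\[
\mathrm{Tor}^{E*G}_1(\mathcal{D}_{E*G},\mathcal{D}_{E*G}) \hookrightarrow \text{a sum of copies of } \mathrm{Tor}^{E*H}_1(\mathcal{D}_{E*H},\mathcal{D}_{E*H}) = 0.
\]
Finally, $G$ is Lewin because $\mathcal{D}_{E*G}$ is both universal and Hughes-free for every crossed product.

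\textbf{Main obstacle.} I expect Step 2 to be hardest: showing the finite crossed product $\mathcal{D}_{E*H}*(G/H)$ has no zero divisors. This is exactly where the $3$-manifold hypothesis must enter. I would first try to obtain local indicability of $G$ from $3$-manifold theory, so that a Hughes-free $\mathcal{D}_{E*G}$ exists abstractly (e.g.\ via the Linnell division ring or Jaikin-Zapirain--L\'opez-\'Alvarez). I would then identify it with the Ore localization above, using uniqueness of Hughes-free division rings restricted to $H$.
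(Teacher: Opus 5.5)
Your proposal has two genuine gaps, and they sit exactly where the paper does its real work. Step 1 is fine in substance: the Henneke--L\'opez-\'Alvarez theorem concerns skew Laurent extensions of firs, which is what applies to $E*F$. The transfer of $\Tor_1$-vanishing from $E*H$ also matches the paper, which does it via Shapiro's lemma in \cref{prop: homology_D}.

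\textbf{Step 2 is not proved.} The content of this step is that $\mathcal{D}_{E*H}*(G/H)$ has no zero divisors and is Hughes-free as a division $E*G$-ring. None of the routes you suggest delivers this.
\begin{itemize}
\item It cannot be an Ore localization of $E*G$ at $E*H\setminus\{0\}$: already $\mathcal{D}_{E*H}$ is not an Ore localization of $E*H$ when $H$ is non-amenable, which is the generic case.
\item The Linnell ring and the Jaikin-Zapirain--L\'opez-\'Alvarez theorem only concern group algebras $K[G]$ with $K\subseteq\C$, not arbitrary crossed products $E*G$. Existence of Hughes-free division rings over general locally indicable groups is open, so local indicability of $G$ does not help by itself.
\item A Kropholler--Linnell--Moody-type argument needs $K$-theoretic input, e.g.\ that $K_0$ of the artinian ring $\mathcal{D}_{E*H}*(G/H)$ is generated by the image of $K_0(E*G)$. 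You do not supply this.
\item Even granting that it is a division ring, Hughes-freeness must be checked for all finitely generated $L\leqslant G$ and $U\triangleleft L$ with $L/U\cong\Z$, not only inside $H$.
\end{itemize}
The paper obtains all of this in \cref{thm: HF_vFbyZ} from the criterion of Fisher and the author. Its inputs are that $H$ is good (Cavendish plus the Scott core theorem) and has the factorization property (Schreve).

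\textbf{Step 3 rests on a false characterization and omits the $K$-theory.} A universal division ring of fractions $\mathcal{D}$ with $\Tor_1^R(\mathcal{D},\mathcal{D})=0$ does not make $R$ pseudo-Sylvester. For a Dedekind domain with non-trivial class group, the fraction field is universal and flat, yet non-principal ideals are projective but not stably free.

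This is decisive because in any pseudo-Sylvester domain $R$ with division ring of fractions $\mathcal{D}$, every finitely generated projective is stably free. For an idempotent $n\times n$ matrix $e$, the law of nullity for $e(1-e)=0$, together with $\rho^*\geq\mathrm{rk}_{\mathcal{D}}$, forces $\rho^*(e)=\mathrm{rk}_{\mathcal{D}}(e)$. Factoring $e\oplus I_s$ through $R^{\rho^*(e)+s}$ then exhibits $R^ne\oplus R^s$ as a summand of $R^{\rho^*(e)+s}$ whose complement has $\mathcal{D}$-rank $0$, hence vanishes.

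So proving that $E*G$ is pseudo-Sylvester \emph{requires} showing that finitely generated projective $E*G$-modules are stably free. This cannot be pulled back from $E*H$ by restriction. Your bullets ``restriction multiplies inner ranks by $|G:H|$'' and ``full over $E*G$ implies stably full over $E*H$'' silently assume it. An idempotent $e$ over $E*G$ defining a non-stably-free projective would have $\rho^*_{E*G}(e)>\mathrm{rk}_{\mathcal{D}_{E*G}}(e)$, while (granting Step 2) its restriction to $E*H$ has stable rank $|G:H|\,\mathrm{rk}_{\mathcal{D}_{E*G}}(e)$.

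The paper supplies the missing input through the $K$-theoretic Farrell--Jones conjecture for $3$-manifold groups (\cref{thm: FJ_vFbyZ}). It then applies the criterion \cref{thm: pseudoSyl_criterion}, using coherence of $\mathcal{D}_{E*G}$ and projective dimension at most $1$ of its finitely generated submodules (\cref{prop: coherence_D,prop: homology_D}). These make the kernels $J$ finitely generated projective, hence stably free.
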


We remark that \cref{thm: univ_vFbyZ} already leads to \cref{cor: Simon}, which was the original motivation.

\subsection*{Organization of the paper}

    In \cref{sec: prelims} we provide some background on division ring embeddings, $L^2$-Betti numbers and pseudo-Sylvester domains. In \cref{sec: Lewin} we show \cref{thm: univ_vFbyZ} and deduce \cref{thm: 3mfld_Lewin}. We then use these results to prove \cref{thm: ineq_thurston3} in \cref{sec: Thurston_ineq}. We also show \cref{cor: Thurston_ball} and provide applications to BNS invariants.

\subsection*{Acknowledgments} 
    
    The author is supported by the grants \seqsplit{PID2024-155800NB-C33} and \seqsplit{EUR2025-164928} of the Ministry of Science, Innovation and Universities of Spain. The author is grateful to Andrei Jaikin-Zapirian for enlightening conversations and remarks on this work, and to Alan Reid for interesting questions and discussions that led to \cref{cor: Thurston_ball}. The author also thanks Wolfgang L\"uck and Andrew Ng for useful comments on this work.

\section{Preliminaries} \label{sec: prelims}

Throughout, rings are assumed to be associative and unital, ring homomorphisms preserve the unit and modules are left modules unless otherwise specified.

\subsection{Division ring embeddings} \label{sec: R-rings}

An \textit{$R$-ring} is a pair $(S, \varphi)$ such that $\varphi \colon R \rightarrow S$ is a ring homomorphism. We will often omit $\varphi$ if it is clear from the context. Two $R$-rings $(S_1, \varphi_1)$ and $(S_2, \varphi_2)$ are {\it isomorphic} if there exists a ring isomorphism $\alpha \colon S_1 \to S_2$ such that $\varphi_2 = \alpha \circ \varphi_1$. 

A \emph{division} $R$-ring $(\mathcal{D}, \varphi)$ is an $R$-ring such that $\mathcal{D}$ is a division ring. Note that $\mathcal D$ inherits the structure of an $R$-bimodule, and therefore given an $R$-module $L$ we can compute its homology with coefficients in $\mathcal D$. Recall that every module over $\mathcal D$ has a well-defined dimension. If $R$ is a ring and $G$ is a group, then $R[G]$ denotes the corresponding group ring. Observe that $R$ is naturally an $R[G]$-module with $G$ acting trivially.

\begin{defn}
	Let $E$ be a division ring, $G$ a group and $\mathcal D$ be a division $E[G]$-ring. Given a non-negative integer $n$, the $n$th \emph{$\mathcal D$-Betti number} of $G$ is the (possibly infinite) value
	\[
		b_n^\mathcal D (G) := \dim_\mathcal D \left( \Tor_n^{E[G]} (\mathcal D, E) \right) \in \Z_{\geq 0} \cup \{\infty\}.
	\]
\end{defn}

An \emph{epic} division $R$-ring is a division $R$-ring $(\mathcal{D}, \varphi)$ such that the division closure of $\im \varphi$ equals $\mathcal{D}$. It is further called \emph{division $R$-ring of fractions} if $\varphi$ is injective.

Let $G$ be a group. A ring $S$ is $G$-\emph{graded} if $S = \bigoplus_{g\in G} S_g$ as an additive group, where $S_g$ is an additive subgroup for every $g\in G$, and $S_g S_h \subseteq S_{gh}$ for all $g,h \in G$. If $S_g$ contains an invertible element $u_g$ for each $g\in G$, then we say that $S$ is a \emph{crossed product} of $S_e$ and $G$ and we shall denote it by $S = S_e * G$.

A group $G$ is \emph{locally indicable} if all its non-trivial finitely generated subgroups admit an epimorphism onto $\Z$. Let $E$ be a division ring, $G$ a locally indicable group and $E * G$ a crossed product. An epic division $E * G$-ring $(\mathcal{D}, \varphi)$ is called \emph{Hughes-free} if for every non-trivial finitely generated subgroup $H \leqslant G$ and every $U \triangleleft H$ such that $H/U \cong \Z$, the multiplication map 
\[
\mathcal D_U \otimes_{E * U} E * H \rightarrow \mathcal D, \qquad x \otimes y \mapsto x \cdot \varphi(y)
\]
is injective. Here, $\mathcal D_U$ denotes the division closure of $\varphi(E * U)$ in $\mathcal D$. By uniquiness up to ring isomorphism \cite{HughesDivRings1970}, if $H$ is any subgroup of $G$, then $\mathcal{D}_{E * H}$ is isomorphic to the division closure of $E * H$ in $\mathcal{D}_{E * G}$, so we view $\mathcal{D}_{E * H}$ as a subring of $\mathcal{D}_{E * G}$. These division rings arise naturally in the context of $L^2$-theory.

We recall that the \emph{augmentation ideal} of $R[G]$ is the two-sided ideal $I_{R[G]}$ defined as the kernel of the \emph{agumentation map} $\varepsilon_{R}$ from $R[G]$ to $R$ that sends $\sum_{g \in G} a_g g$ to $\sum_{g \in G} a_g$.

\subsection{\texorpdfstring{$L^2$}{L²}-Betti numbers} \label{sec: L2Betti}

In this section we give a brief review of $L^2$-Betti numbers, emphasising what will be important for us. We refer the reader to \cite{Luck02} for more background on $L^2$-invariants. Let $G$ be a countable group and let $\ell^2(G)$ denote the Hilbert space of complex series in elements of $G$ with square-summable coefficients. The left and right multiplication actions of $G$ on itself extend to left and right actions of $G$ on $\ell^2(G)$. We can further extend the right action of $G$ on $\ell^2(G)$ to an action of $\C[G]$ on $\ell^2(G)$, and hence consider $\C[G]$ as a subalgebra of $\mathcal{B}(\ell^2(G))$, namely the {\it bounded linear operators on} $\ell^2(G)$. The {\it group von Neumann algebra} $\mathcal N(G)$ of $G$ is the algebra of bounded operators on $\ell^2(G)$ that commute with the left action of $G$. Note that $\mathcal N(G)$ contains the complex group algebra $\C[G]$. The ring $\mathcal{N}(G)$ is a finite von Neumann algebra. Moreover, $\mathcal{N}(G)$ satisfies the left (and right) Ore condition with respect to the set of non-zero-divisors (a result proved by S. K. Berberian in \cite{Berberian82}). We denote by $\mathcal{U}(G)$ the left (resp. right) classical ring of fractions named the {\it ring of unbounded operators affiliated to} $G$. Using the trace functional of $\mathcal{N}(G)$, one can define a dimension function on all $\mathcal{N}(G)$-modules (see \cite[Chapters 6.1 and 6.2]{Luck02}), and define for every non-negative integer $n$ the $n$th \emph{$L^2$-Betti number} of $G$ to be
\[
	b_n^{(2)}(G) := \dim_{\mathcal N(G)} H_n(G; \mathcal N(G)).
\]

The description of $L^2$-Betti numbers given so far relies on the involved definition of $\dim_{\mathcal N(G)}$. However, we will work with groups that satisfy the Strong Atiyah Conjecture, in which case the definition is far more straightforward. We give a formulation of the Strong Atiyah Conjecture for torsion-free groups. Given a subfield $k$ of $\C$, the \emph{Linnell ring over $k$}, denoted $\mathcal{D}(k[G])$, is the division closure of $k[G]$ in $\mathcal{U}(G)$.

\begin{conj}[The Strong Atiyah Conjecture over $k \subseteq \C$]
	Let $G$ be a torsion-free countable group, and let $k$ be a subfield of $\C$. The Linnell ring $\mathcal{D}(k[G])$ is a division ring.
\end{conj}

Linnell and Schick showed that this statement is equivalent to the usual formulation of the Strong Atiyah Conjecture in terms of the von Neumann dimension function (see \cite[Lemma 12.3]{LinnellZeroDivc} and \cite[Lemma 3]{SchickIntegrality}). While the Strong Atiyah Conjecture is open in general, it has been established for many large classes of groups (including torsion-free $3$-manifold groups \cite{FriedlLuck_euler,KielakLinton_3mfldAtiyah}). For ease of notation, we write $\mathcal{D}_G$ for the Linnell ring over $\Q$ of a group $G$ satisfying the Strong Atiyah Conjecture. In this case, we have
\[
	b_n^{(2)}(G) = \dim_{\mathcal{D}_G} H_n(G; \mathcal{D}_G) = b_n^{\mathcal{D}_G}(G)
\]
where $\dim_{\mathcal{D}_G}$ is just the usual rank of a module over a division ring.

When $G$ is locally indicable, $\mathcal D_{\Q[G]}$ always exists and coincides with $\mathcal{D}_G$ by a result of Jaikin-Zapirain and L\'opez-\'Alvarez \cite{JaikinLopez_Atiyah}. Note that by uniqueness up to $\Q[G]$-isomorphism, if $G$ is a Lewin group then $\mathcal{D}_G$ is in fact the universal (Hughes-free) division $\Q[G]$-ring of fractions.

For a general countable group $G$, an epic division $E * G$-ring $(\mathcal{D}, \varphi)$ is called \emph{Linnell-free} if for every pair of subgroups $U \leqslant H$ of $G$, the multiplication map 
\[
\mathcal D_U \otimes_{E * U} E * H \rightarrow \mathcal D, \qquad x \otimes y \mapsto x \cdot \varphi(y)
\]
is injective. It is shown in \cite{Linnell_localizations} (cf. the discussion after Problem 4.5) that elements in different cosets of $H$ in $G$ are right $\UO(H)$-linearly independent in $\UO(G)$. Thus, if $G$ satisfies the Strong Atiyah Conjecture, $\mathcal{D}_G$ is Linnell-free. However, while \cite{HughesDivRings1970} ensures the uniqueness of the division ring for locally indicable groups, it remains unknown whether Linnell-free division $E*G$-rings are unique up to $E*G$-isomorphism for a general group $G$. Moreover, Gr\"ater showed in \cite[Corollary 8.3]{Grater20} that if $G$ is a locally indicable group and the Hughes-free division $E* G$-ring $\mathcal D_{E*G}$ exists, then it is in fact a Linnell-free division $E*G$-ring.

\subsection{Pseudo-Sylvester domains} \label{sec: pseudoSyl}

A ring $R$ is said to be \emph{stably finite} if whenever $A$ and $B$ are two $n \times n$-matrices over $R$ such that $AB = I_n$, then also $BA = I_n$. We remark that if $E$ is a field of characteristic zero or $E$ is of positive characteristic and $G$ is sofic, then $E[G]$ is stably finite \cite{Kaplansky72,ElekSzabo04}.

Let $R$ be a ring and let $A$ be an $m \times n$ matrix over $R$. The \emph{inner rank} of $A$, denoted $\rho(A)$, is defined as the least $k$ such that $A$ admits a decomposition $A = B_{m \times k} C_{k \times n}$. Recall also that the \emph{stable rank} of $A$, denoted $\rho^{*}(A)$, is given by
\[
\rho^{*}(A) : = \lim_{s \to \infty} (\rho(A \oplus I_s) - s),
\]
whenever the limit exists, where $A \oplus I_s$ denotes the block diagonal matrix with blocks $A$ and $I_s$. When $R$ is stably finite, $\rho^{*}(A)$ is well-defined and non-negative, and it is positive if $A$ is a non-zero matrix \cite[Proposition 0.1.3]{cohn06FIR}.

A ring $R$ is a \emph{pseudo-Sylvester domain} if $R$ is stably finite and satisfies the law of nullity with respect to the stable rank, that is, if $A \in \Mat_{m \times n}(R)$ and $B \in \Mat_{n \times k}(R)$ are such that $A B = 0$, then
\[
\rho^{*}(A) + \rho^{*}(B) \leq n.
\]

\begin{thm}[{\cite[Theorem 7.5.18]{cohn06FIR}}] \label{thm: univ_pseudoSyl}
	Every pseudo-Sylvester domain $R$ has a universal division $R$-ring of fractions.
\end{thm}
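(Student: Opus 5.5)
Since ranks over a division ring control dimensions of finitely presented modules, the plan is to show that the stable rank $\rho^{*}$ on a pseudo-Sylvester domain $R$ is an integer-valued Sylvester matrix rank function, realise it by an epic division $R$-ring $\mathcal{D}$, and then check that $\rho^{*}$ dominates the rank coming from every other division $R$-ring. Throughout, a finitely presented module is written as $L = R^n / R^m A$ for an $m\times n$ matrix $A$. Then $\dim_{\mathcal{D}}(\mathcal{D}\otimes_R L) = n - \operatorname{rk}_{\mathcal{D}}(A)$, so universality of $\mathcal{D}$ amounts to
\[
\operatorname{rk}_{\mathcal{E}}(A) \le \operatorname{rk}_{\mathcal{D}}(A)
\]
for every division $R$-ring $\mathcal{E}$ and every matrix $A$.

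\textbf{Step 1: $\rho^{*}$ is a Sylvester matrix rank function.} Stable finiteness makes $\rho^{*}$ well defined, non-negative, and positive on non-zero matrices (by \cite[Proposition 0.1.3]{cohn06FIR}). The easy properties are:
\begin{itemize}
\item $\rho^{*}(1)=1$;
\item $\rho^{*}(AB)\le \min\{\rho^{*}(A),\rho^{*}(B)\}$, which follows from the corresponding inequality for inner rank applied to $(A\oplus I_s)(B\oplus I_s)$;
\item $\rho^{*}(A\oplus B)\le \rho^{*}(A)+\rho^{*}(B)$.
\end{itemize}
The substantive part is the reverse block inequality
\[
\rho^{*}\begin{pmatrix} A & C\\ 0 & B\end{pmatrix}\ \ge\ \rho^{*}(A)+\rho^{*}(B),
\]
which in particular gives additivity on diagonal sums. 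I would derive it from the law of nullity as follows:
\begin{itemize}
\item Stabilise, replacing $A$ and $B$ by $A\oplus I_s$ and $B\oplus I_t$, so that stable ranks equal inner ranks.
\item Take a minimal factorisation of the block matrix through $R^k$.
\item Split the factors according to the block structure, producing a pair of matrices whose product vanishes.
\item Apply the law of nullity to that pair, which yields $\rho^{*}(A)+\rho^{*}(B)\le k$.
\end{itemize}
This is the analogue of Cohn's argument for Sylvester domains, with inner rank replaced by stable rank. I expect this step to be the main obstacle, because the law of nullity is only assumed for $\rho^{*}$, so every use of it must be arranged after stabilisation.

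\textbf{Step 2: realise $\rho^{*}$ by an epic division ring.} Let $\mathcal{P}$ be the set of square $n\times n$ matrices with $\rho^{*}<n$. Using Step 1, I would check that $\mathcal{P}$ is a prime matrix ideal: it is closed under determinantal sums, under diagonal sums with arbitrary matrices, and contains no identity matrix. I would also check primeness, meaning that $A\oplus B\in\mathcal{P}$ forces $A\in\mathcal{P}$ or $B\in\mathcal{P}$; this uses additivity of $\rho^{*}$ on diagonal sums. Cohn's localization at $\mathcal{P}$ (equivalently, Malcolmson's correspondence between integer-valued Sylvester rank functions and epic division $R$-rings) then gives an epic division $R$-ring $\mathcal{D}$ with $\operatorname{rk}_{\mathcal{D}}=\rho^{*}$. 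A non-zero $a\in R$ has $\rho^{*}(a)=1$ by stable finiteness, so $a\notin\mathcal{P}$ and the map $R\to\mathcal{D}$ is injective. Hence $\mathcal{D}$ is a division ring of fractions.

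\textbf{Step 3: universality.} Let $\mathcal{E}$ be any division $R$-ring. If $A\oplus I_s=BC$ with inner dimension $k$, then over $\mathcal{E}$ we get
\[
\operatorname{rk}_{\mathcal{E}}(A)+s = \operatorname{rk}_{\mathcal{E}}(A\oplus I_s) \le k.
\]
Taking $k=\rho(A\oplus I_s)$ and letting $s\to\infty$ gives $\operatorname{rk}_{\mathcal{E}}(A)\le\rho^{*}(A)=\operatorname{rk}_{\mathcal{D}}(A)$, which is exactly the required inequality of module dimensions.
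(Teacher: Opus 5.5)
Your proposal is correct and follows essentially the standard argument behind the cited result; the paper gives no proof of its own, only the reference to Cohn's Theorem~7.5.18. That argument runs as you describe: the law of nullity makes $\rho^{*}$ an integer-valued Sylvester rank function, Malcolmson's correspondence (or Cohn's localization at the prime matrix ideal of matrices that are not stably full) realises it by an epic division $R$-ring of fractions, and universality follows from $\operatorname{rk}_{\mathcal{E}}\le\rho^{*}$. The one point of bookkeeping to fix in Step~1 is that $s+t$ must be large enough to stabilise the whole block matrix $M$, so that $k=\rho^{*}(M)+s+t$. The law of nullity then gives $\rho^{*}(A)+\rho^{*}(B)+s+t\le k$, rather than only a bound by the possibly larger inner rank of $M$.
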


\section{Locally indicable \texorpdfstring{$3$}{3}-manifold groups are Lewin} \label{sec: Lewin}

In this section we show \cref{thm: 3mfld_Lewin}. As a consequence, we obtain that the fundamental group of an admissible $3$-manifold with positive first Betti number is a Lewin group. \cref{thm: 3mfld_Lewin} will follow from \cref{thm: univ_vFbyZ} on torsion-free $3$-manifold groups that are virtually free-by-cyclic. 

\subsection{Torsion-free virtually free-by-cyclic \texorpdfstring{$3$}{3}-manifold groups}

In this section $G$ stands for a torsion-free $3$-manifold group that is virtually free-by-cyclic, $E$ for a division ring and $E * G$ for a crossed product. Note that if $G$ is virtually free, then by \cite[Theorem B]{Swancd1} $G$ is actually a free group. In this case, $E * G$ is known to be a free ideal ring \cite{Cohn_Universality}. Consequently, the results listed below--including \cref{thm: univ_vFbyZ,thm: 3mfld_Lewin}--are already established and admit more straightforward proofs. For the remainder of the section, let $H$ denote a finite-index normal subgroup of $G$ that is free-by-cyclic (with the free-by-$\Z$ case being of primary interest). Note that $H$ is itself a $3$-manifold group. 

To show that $E * G$ is a pseudo-Sylvester domain (\cref{thm: univ_vFbyZ}), we shall use the following criterion of Henneke and L\'opez-\'Alvarez. We recall that an $R$-module $L$ is called \emph{stably free} if there exists a non-negative integer $n$ such that $L \oplus R^n$ is a free $R$-module.

\begin{thm}[{\cite[Theorem 3.4]{HennekeLopez_pseudoSylvester}}] \label{thm: pseudoSyl_criterion}
	Let $\mathcal{D}$ be a division $R$-ring of fractions. Assume that
	\begin{enumerate}[label=(\roman*)]
		\item $\Tor_1^R(\mathcal{D}, \mathcal{D}) = \{0 \}$ and
		\item for any finitely generated left or right $R$-submodule $L$ of $\mathcal{D}$ and any exact sequence $0 \to J \to R^n \to L \to 0$, the left or right $R$-module $J$ is finitely generated stably free.
	\end{enumerate}
	Then $R$ is a pseudo-Sylvester domain and $\mathcal{D}$ is the universal division $R$-ring of fractions.
\end{thm}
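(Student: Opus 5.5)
The plan is to show that for every matrix $A \in \Mat_{m \times n}(R)$ the stable rank $\rho^{*}(A)$ over $R$ equals the rank $\operatorname{rk}_{\mathcal{D}}(A)$ of $A$ viewed over $\mathcal{D}$. Everything else then follows quickly:
\begin{enumerate}[label=(\alph*)]
	\item \emph{Stable finiteness.} $R$ embeds in the division ring $\mathcal{D}$, which is stably finite, and stable finiteness passes to subrings.
	\item \emph{Law of nullity.} If $AB = 0$ over $R$, then also $AB = 0$ over $\mathcal{D}$. Linear algebra over $\mathcal{D}$ gives $\operatorname{rk}_{\mathcal{D}}(A) + \operatorname{rk}_{\mathcal{D}}(B) \leq n$, and hence $\rho^{*}(A) + \rho^{*}(B) \leq n$.
	\item \emph{Universality of $\mathcal{D}$.} For any division $R$-ring $\mathcal{E}$ and any factorization $A \oplus I_s = BC$ over $R$, we get $\operatorname{rk}_{\mathcal{E}}(A) + s \leq \rho(A \oplus I_s)$. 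Therefore $\operatorname{rk}_{\mathcal{E}}(A) \leq \rho^{*}(A) = \operatorname{rk}_{\mathcal{D}}(A)$. Translated into dimensions of $\mathcal{E} \otimes_R L$ for finitely presented $L$, this is exactly the universality condition (\cref{thm: univ_pseudoSyl} is then consistent with this).
\end{enumerate}

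The inequality $\operatorname{rk}_{\mathcal{D}}(A) \leq \rho^{*}(A)$ is immediate by the same argument as in (c). For the reverse inequality, consider the left $R$-module $L = R^m A \subseteq R^n \subseteq \mathcal{D}^n$. This is a finitely generated submodule of $\mathcal{D}^n$. I would show two things.
\begin{itemize}
	\item[(1)] The natural map $\mathcal{D} \otimes_R L \to \mathcal{D}^n$ is injective, so that $\dim_{\mathcal{D}}(\mathcal{D} \otimes_R L) = \operatorname{rk}_{\mathcal{D}}(A)$.
	\item[(2)] In $0 \to J \to R^m \to L \to 0$, the module $J$ is finitely generated stably free, and $L$ is stably free as well.
\end{itemize}
Given (1) and (2), choose $t$ with $L \oplus R^t \cong R^{r+t}$. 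Counting dimensions after tensoring with $\mathcal{D}$ forces $r = \operatorname{rk}_{\mathcal{D}}(A)$. The map $R^m \to R^n$ given by $A$ factors through $L$, so $A \oplus I_t$ factors through $R^{r+t}$. This gives $\rho^{*}(A) \leq r$.

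To prove (1) and (2) I would reduce from $\mathcal{D}^n$ to $\mathcal{D}$, where the hypotheses live, using the coordinate filtration. Set $L_k = L \cap (\mathcal{D}^k \oplus 0)$, so that each quotient $L_k/L_{k-1}$ embeds in $\mathcal{D}$ via the $k$th coordinate. The steps are as follows.
\begin{itemize}
	\item[(i)] For a finitely generated $N \leq \mathcal{D}$, the hypothesis $\Tor_1^R(\mathcal{D},\mathcal{D}) = 0$ gives $\Tor_1^R(\mathcal{D}, N) = 0$ and injectivity of $\mathcal{D} \otimes_R N \to \mathcal{D}$. The idea is to pass to the directed union of finitely generated submodules of $\mathcal{D}$, using that $\Tor$ commutes with direct limits.
	\item[(ii)] Hypothesis (ii) gives a length-one resolution of $N$ by finitely generated stably free modules.
	\item[(iii)] These properties pass to extensions via the long exact $\Tor$ sequence and the horseshoe lemma. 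Stable freeness of $L$ itself then follows from (ii) applied along the filtration, together with the fact that extensions of stably free modules by modules of projective dimension $\leq 1$ with stably free resolutions remain of that type. Comparing $\mathcal{D}$-dimensions then pins the kernel $J$ and $L$ down to stably free modules of the right rank.
\end{itemize}

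The step I expect to be the main obstacle is this filtration argument. There are two issues. First, the quotients $L_k/L_{k-1}$ must be finitely generated, since hypothesis (ii) is only stated for finitely generated submodules of $\mathcal{D}$, and this is not automatic. Second, $L$ must come out genuinely stably free, not merely of projective dimension at most one. My first attempt would be to avoid the filtration altogether. I would replace $A$ by an equivalent matrix whose row space is generated inside a single copy of $\mathcal{D}$, using that $\mathcal{D}$ is the division closure of $R$ and Cramer's rule (Cohn's display of elements of $\mathcal{D}$ as components of solutions of $R$-matrix equations). This would turn the problem into one about a finitely generated submodule of $\mathcal{D}$, to which (ii) applies directly.
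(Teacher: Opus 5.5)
Your reduction to $\rho^{*}(A)=\operatorname{rk}_{\mathcal D}(A)$ is fine, as are items (a)--(c) and the inequality $\operatorname{rk}_{\mathcal D}(A)\le\rho^{*}(A)$. The reverse bound fails because you factor $A$ through the wrong module: claim (1) is false under the hypotheses. The kernel of $\mathcal D\otimes_R L\to\mathcal D^n$ is $\Tor_1^R(\mathcal D,R^n/R^mA)$. The cokernel of $A$ is not a submodule of a $\mathcal D$-vector space, so neither (i) nor (ii) controls this kernel.

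Here are two counterexamples.
\begin{itemize}
\item Take $R=\Q[F_2]$ with $F_2=\langle x,y\rangle$, $\mathcal D=\mathcal D_{F_2}$ (this pair satisfies (i) and (ii), e.g.\ by the proof of \cref{thm: univ_vFbyZ}), and $A=(x-1,\,y-1)^{T}$. Then $L=R^2A$ is the augmentation ideal, free on $x-1,y-1$. So $\dim_{\mathcal D}(\mathcal D\otimes_R L)=2=1+b_1^{(2)}(F_2)$, while $\operatorname{rk}_{\mathcal D}(A)=1$; the kernel is $\Tor_1^R(\mathcal D,\Q)\neq 0$. Here (2) holds with $r=2$, and factoring through $L$ only gives $\rho(A)\le 2$.
\item Take $R=\Q[x,y]$ and $\mathcal D=\Q(x,y)$. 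Here $\mathcal D$ is flat, ideals have projective dimension $\le 1$, and projectives are free, so (i) and (ii) hold. The column $A=(x,\,y)^{T}$ gives $L=(x,y)$, which is not stably free, so (2) fails.
\end{itemize}
Both examples have $n=1$, so your fallback of moving the row space into a single copy of $\mathcal D$ does not help. Your step (i) is also wrong as stated. Vanishing of $\Tor_1^R(\mathcal D,\mathcal D)=\varinjlim_N\Tor_1^R(\mathcal D,N)$ does not force the terms to vanish. The asserted injectivity of $\mathcal D\otimes_R N\to\mathcal D$ fails in the first example.

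The missing idea is to replace $L$ by its saturation
\[
K:=\ker\bigl(R^n\to\mathcal D^n/\mathcal D^mA\bigr)=R^n\cap\mathcal D^mA .
\]
It contains $R^mA$, so $A$ factors through $K$. Its cokernel $R^n/K$ embeds in $\mathcal D^n/\mathcal D^mA\cong\mathcal D^{n-d}$, where $d=\operatorname{rk}_{\mathcal D}(A)$.

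First, $K$ is f.g.\ stably free. Hypothesis (ii) propagates from $\mathcal D$ to $\mathcal D^k$ by your coordinate filtration, taken from the top. The image of a f.g.\ $N\le\mathcal D^k$ in the last coordinate is f.g., hence finitely presented by (ii), so $N\cap\mathcal D^{k-1}$ is f.g. Your worry about finite generation therefore disappears. The horseshoe and Schanuel lemmas then give stably free syzygies for every presentation, and applying this to $R^n/K$ shows $K$ is f.g.\ stably free.

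Second, $K$ has rank $d$. Use the right-hand version of (ii): every f.g.\ right submodule of $\mathcal D$ has projective dimension $\le 1$. So $\mathcal D_R$ has weak dimension $\le 1$, and hence $\Tor_1^R(\mathcal D,N)\hookrightarrow\Tor_1^R(\mathcal D,\mathcal D^k)=0$ for every left submodule $N\le\mathcal D^k$. With $N=R^n/K$, the map $\mathcal D\otimes_R K\to\mathcal D^n$ is injective with image $\mathcal D K=\mathcal D^mA$. Thus $K\oplus R^t\cong R^{d+t}$, and $A\oplus I_t$ factors through $R^{d+t}$, giving $\rho^{*}(A)\le d$.

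The paper itself quotes this theorem from Henneke--L\'opez-\'Alvarez without proof, so there is no in-paper argument to compare against.
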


In order to apply \cref{thm: pseudoSyl_criterion}, we will first construct a Hughes-free division ring embedding for $E * G$, and then check that this embedding satisfies the conditions of the theorem. 

Before going into the proofs, we start showing that $G$ is indeed a locally indicable group. The following lemma is well-known in the aspherical orientable case; we include a proof because we have not seen it appear without these assumptions.

\begin{prop} \label{prop: cd2_li}
	If $M$ is a connected $3$-manifold with $\pi_1(M)$ torsion-free and $\cd_{\Q}(\pi_1(M)) \leq 2$, then $\pi_1(M)$ is locally indicable.
\end{prop}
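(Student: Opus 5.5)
We need to show that every non-trivial finitely generated subgroup $K \leqslant \pi_1(M)$ surjects onto $\Z$. The plan is to pass to the covering space of $M$ corresponding to $K$, use Scott's core theorem to replace it by a compact $3$-manifold with fundamental group $K$, and then get a positive first Betti number from an Euler characteristic computation. The dimension bound $\cd_{\Q}(K) \leq 2$ is what makes that computation work.

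\textbf{Step 1 (reduction to a compact manifold).} Let $K \leqslant G := \pi_1(M)$ be finitely generated and non-trivial, and let $\widetilde M_K \to M$ be the covering with $\pi_1(\widetilde M_K) = K$. Since $K$ is finitely generated, Scott's compact core theorem gives a compact connected $3$-submanifold $C \subseteq \widetilde M_K$ whose inclusion is a $\pi_1$-isomorphism. So $K = \pi_1(C)$ with $C$ compact. The group $K$ is torsion-free, and $\cd_{\Q}(K) \leq \cd_{\Q}(G) \leq 2$ since cohomological dimension is monotone under passing to subgroups.

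\textbf{Step 2 (reduction to orientable, prime, non-closed pieces).} If $C$ is non-orientable, pass to the orientation double cover $\widehat C$. Its group $\widehat K$ has index $\leq 2$ in $K$, and it suffices to show $b_1(K)>0$.

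I would like to argue that $b_1(\widehat K)>0$ forces $b_1(K)>0$ when $K$ is non-orientable. The cleaner route is to avoid this. I would cite the known fact that a compact non-orientable $3$-manifold that is $\mathbb{P}^2$-irreducible, or more generally contains no two-sided $\mathbb{P}^2$, has infinite $H_1$, because $\chi(\partial C)<0$-type arguments or the half-lives-half-dies argument produce a rational class. Torsion-freeness excludes $\mathbb{P}^2\times I$ summands affecting this.

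For the orientable case, use the Kneser–Milnor prime decomposition. If $K$ is a non-trivial free product, it surjects onto the first Betti number of one of its factors, so we reduce to $C$ prime. If the decomposition contains an $S^1\times S^2$ summand, we are done immediately. We may also cap off $2$-sphere boundary components by balls without changing $\pi_1$.

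\textbf{Step 3 (Euler characteristic).} For $C$ prime, orientable and irreducible with infinite $K$ (which holds since $K$ is torsion-free and non-trivial), $C$ is aspherical. Hence $\cd_{\Q}(K)=3$ if $C$ is closed. The hypothesis therefore forces $\partial C \neq \emptyset$, with no sphere components. Half-lives-half-dies gives
\[
b_1(C) \geq \tfrac12 b_1(\partial C) \geq 1,
\]
since a non-empty closed orientable surface other than $S^2$ has positive $b_1$. Thus $K$ surjects onto $\Z$.

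\textbf{Main obstacle.} The delicate step is Step 2, handling non-orientable $C$ and the prime decomposition without asphericity assumptions on $M$. In particular, one must ensure that a closed aspherical piece is excluded through the $\cd_{\Q}$ bound. This is where torsion-freeness and $\cd_{\Q}\leq 2$ together rule out the closed case. In the non-orientable case, closed $\mathbb{P}^2$-irreducible pieces are also aspherical, so again $\partial C\neq\emptyset$, and half-lives-half-dies with $\Z/2$ coefficients must be replaced by a rational version. I would first try applying the rational half-lives-half-dies, which holds for any compact $3$-manifold, to a non-orientable $C$ with non-empty boundary containing a non-sphere component.
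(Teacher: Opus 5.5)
Your route is genuinely different from the paper's. The paper argues algebraically: after reducing to finitely generated groups, it uses Kielak--Linton to see that $\pi_1(M)$ is virtually free-by-cyclic, so $b_2^{(2)}=0$. It gets the Strong Atiyah Conjecture from Linnell's class $\mathcal{C}$ and concludes with Jaikin-Zapirain--Linton's local-indicability criterion. Your orientable case is correct. The prime decomposition reduces to prime factors. The bound $\cd_{\Q}\le 2$ rules out closed irreducible factors with non-trivial group, since these are aspherical with $H^3(\pi_1 P;\Q)\cong\Q$. Orientable half-lives-half-dies then handles the bounded irreducible factors.

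\textbf{The gap is the non-orientable case, which you never actually carry out.} The fact you cite is true, but you neither prove it nor derive its hypothesis from torsion-freeness. The tool you propose for it is false: half-lives-half-dies over $\Q$ rests on Poincar\'e--Lefschetz duality with $\Q$-coefficients, i.e.\ on orientability.
\begin{itemize}
\item For $C$ the product of a M\"obius band with $S^1$, the map $H_1(\partial C;\Q)\to H_1(C;\Q)$ is injective, so nothing dies.
\item For $C=(P\times I)\natural(P\times I)$ with $P$ the Klein bottle, even the inequality $b_1(C)\ge\frac12 b_1(\partial C)$ fails. Here $b_1(C)=2$, while $\partial C$ consists of two Klein bottles and a genus-$4$ non-orientable surface, so $b_1(\partial C)=5$.
\end{itemize}

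\textbf{The right tool is the Euler characteristic.}
\begin{itemize}
\item Cap off the sphere components of $\partial C$.
\item No component of $\partial C$ is $\mathbb{RP}^2$, because such a component would put $\Z/2$ inside $K$. Indeed, it is $\pi_1$-injective: by the loop theorem a compressing disc would have two-sided boundary curve, and two-sided simple closed curves in $\mathbb{RP}^2$ bound discs.
\item If $C$ is non-orientable or $\partial C\neq\emptyset$, then $H_3(C;\Q)=0$ and $\chi(C)=\frac12\chi(\partial C)\le 0$. Hence $b_1(C)=1+b_2(C)-\chi(C)\ge 1$.
\end{itemize}
This settles the non-orientable case without using $\cd_{\Q}$ at all. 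So your remark about closed non-orientable aspherical pieces is unnecessary; it would in any case need $H^3(K;\Q[K])\neq 0$ rather than $H^3(C;\Q)$, which vanishes. The same computation also covers the bounded orientable case. The hypothesis $\cd_{\Q}\le 2$ is then needed only for closed orientable $C$, where your prime-decomposition argument shows $K$ is a non-trivial free group.

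With this repair you get a self-contained, elementary topological proof that avoids the $L^2$ and division-ring machinery the paper invokes.
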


\begin{proof}
	Without loss of generality we assume that $\pi_1(M)$ is finitely generated. By \cite[Theorem 1.1]{KielakLinton_3mfldAtiyah}, $\pi_1(M)$ is virtually free-by-cyclic, and hence, $b_2^{(2)}(\pi_1(M))$ vanishes. By the work of Friedl--L\"uck and Kielak--Linton \cite{FriedlLuck_euler, KielakLinton_3mfldAtiyah} $3$-manifold groups lie in Linnell's class $\mathcal{C}$, and hence, being torsion-free, $\pi_1(M)$ satisfies the Strong Atiyah Conjecture over $\C$ by \cite{LinnellDivRings93}. Therefore, $\pi_1(M)$ is locally indicable by \cite[Theorem 3.11]{JaikinLinton_oneRelCoherence}.
\end{proof}

\begin{cor} \label{cor: vFbyZ_li}
	Let $G$ be a torsion-free $3$-manifold group that is virtually free-by-cyclic. Then $G$ is a locally indicable group of cohomological dimension at most $2$.
\end{cor}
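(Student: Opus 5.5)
The plan is to deduce the corollary from \cref{prop: cd2_li} by showing that $\cd_{\Q}(G) \leq 2$, and in fact $\cd(G) \leq 2$. Once we know $\cd_\Q(G)\le 2$, we have $G = \pi_1(M)$ for some connected $3$-manifold $M$ with $G$ torsion-free, so \cref{prop: cd2_li} applies directly and gives local indicability.

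First we bound the cohomological dimension of the finite-index normal subgroup $H \leqslant G$, which is free-by-cyclic: $1 \to F \to H \to Z \to 1$ with $F$ free (possibly infinitely generated) and $Z$ cyclic. Since $G$ is torsion-free, so is $H$, and hence $Z$ is either trivial or infinite cyclic. Indeed, if $Z$ were finite and non-trivial, then $H$ would be virtually free. A torsion-free virtually free group is free by Stallings--Swan (cf.\ \cite[Theorem B]{Swancd1}), so in that case we may simply take $Z$ trivial. The standard subadditivity of cohomological dimension under extensions then gives $\cd(H) \leq \cd(F) + \cd(\Z) \leq 1 + 1 = 2$; this is visible concretely from the Lyndon--Hochschild--Serre spectral sequence, which has only two non-zero columns and two non-zero rows.

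Next we pass from $H$ to $G$. Since $G$ is torsion-free and contains $H$ with finite index, Serre's theorem (extended by Swan to arbitrary torsion-free groups with a finite-index subgroup of finite cohomological dimension) gives $\cd(G) = \cd(H) \leq 2$. Consequently $\cd_{\Q}(G) \leq \cd(G) \leq 2$.

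The only genuinely non-formal input is Serre--Swan's theorem on torsion-free virtually finite-dimensional groups. Alternatively, rationally one can avoid it: $\cd_\Q(G)=\cd_\Q(H)$ holds for any finite-index subgroup, by a transfer argument since $|G:H|$ is invertible in $\Q$. That rational statement already suffices for \cref{prop: cd2_li}, and it also yields the claimed bound on cohomological dimension if one reads the corollary over $\Q$. I therefore expect no real obstacle beyond citing these standard facts correctly and handling the degenerate case where $Z$ is trivial or finite.
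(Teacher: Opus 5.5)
Your proposal is correct and follows the paper's proof: $\cd(H)\le 2$, then Serre's theorem for the torsion-free group $G$ gives $\cd(G)=\cd(H)\le 2$, and \cref{prop: cd2_li} gives local indicability. The paper leaves $\cd(H)\le 2$ implicit, whereas you spell it out. One small correction there: when the cyclic quotient is finite, replacing the extension by $H$ free is a without-loss-of-generality re-choice via Swan, not a consequence of torsion-freeness as your first sentence suggests (e.g.\ $2\Z\le\Z$ with quotient $\Z/2$).
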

\begin{proof}
	Since $G$ is torsion-free and $H$ is of finite index in $G$, according to \cite[Th\'eor\`eme 1, Section 1.7]{Serre_CohomologyFI} $\cd(G) = \cd(H) \leq 2$. Therefore, by \cref{prop: cd2_li} we get that $G$ is locally indicable.
\end{proof}

Now, we construct the Hughes-free embedding. We will use a criterion due to Fisher and the author \cite[Theorem 4.2]{FisherPeralta_Kaplansky'sZD3mfld}.

\begin{thm} \label{thm: HF_vFbyZ}
	Let $G$ be a torsion-free $3$-manifold group that is virtually free-by-cyclic. Then $G$ is Hughes-free embeddable. Moreover, if $H$ denotes a finite-index normal subgroup of $G$ such that $H$ is free-by-cylic, then for any crossed product $E * G$, where $E$ is a division ring, the division $E*G$-ring $\mathcal{D}_{E * G}$ is of the form $\mathcal{D}_{E * H} * G/H$.
\end{thm}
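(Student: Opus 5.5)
We use the criterion of Fisher and the author \cite[Theorem 4.2]{FisherPeralta_Kaplansky'sZD3mfld}. As we recall it, it reads: if $G$ is locally indicable, $H \trianglelefteq G$ has finite index, $H$ is Hughes-free embeddable, and $\mathcal{D}_{E*H} * G/H$ is a division ring (equivalently, a domain, since it is finite-dimensional over the division ring $\mathcal{D}_{E*H}$), then $\mathcal{D}_{E*H} * G/H$ is the Hughes-free division $E*G$-ring.

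The plan has four steps, carried out in the following order.

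\textbf{Step 1: $G$ is locally indicable.} By \cref{cor: vFbyZ_li}, $G$ is locally indicable and $\cd(G) \leq 2$. So the Hughes-free notion makes sense for every crossed product $E*G$.

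\textbf{Step 2: the subgroup $H$.} Fix a finite-index normal subgroup $H$ that is free-by-cyclic. If $H$ is free, or free-by-(finite cyclic), then $H$ is torsion-free and virtually free. Hence $H$ is free by \cite[Theorem B]{Swancd1}, and $E*H$ is a fir with a Hughes-free universal division ring of fractions \cite{Cohn_Universality}. If $H$ is free-by-$\Z$, write $H = F \rtimes \Z$. The crossed product decomposes as $E*H = (E*F)*\Z$, and $E*F$ has the Hughes-free division ring $\mathcal{D}_{E*F}$. The Ore localization of the skew Laurent ring $\mathcal{D}_{E*F}[t^{\pm1};\tau]$ is then a division ring in which $E*H$ embeds. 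I expect the Hughes-free property to follow by the standard argument for extensions of locally indicable groups by $\Z$, as in Jaikin-Zapirain's treatment of free-by-cyclic groups \cite{JaikinZapirain2020THEUO}. In every case $\mathcal{D}_{E*H}$ exists.

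\textbf{Step 3: the extension.} The $G$-conjugation action on $E*H$ extends to $\mathcal{D}_{E*H}$, by uniqueness of Hughes-free division rings up to isomorphism \cite{HughesDivRings1970}. This lets us form the crossed product $\mathcal{D}_{E*H} * G/H$. It contains $E*G$, and it is an Artinian ring. Its elements are algebraic over the division closure of the image of $E*G$. Hence, once it is a division ring, it is epic over $E*G$.

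\textbf{Step 4: the crossed product is a domain.} This is where I expect the main obstacle. A finite crossed product of a division ring is semisimple, so it is a division ring exactly when it has no zero divisors, i.e. a Kaplansky-type zero-divisor statement for $\mathcal{D}_{E*H}*G/H$. My first attempt is to use that $G$ is torsion-free with $\cd(G)\leq 2$, and that $3$-manifold groups satisfy the Strong Atiyah Conjecture \cite{FriedlLuck_euler,KielakLinton_3mfldAtiyah}.

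The idea is to compute the dimension of the cyclic module $\mathcal{D}_{E*H}*G/H \cdot x$ as an $\mathcal{D}_{E*H}$-vector space. We would then show, in the spirit of Linnell's argument, that this dimension is a multiple of $[G:H]$ whenever $x \neq 0$. This holds if the relevant "Atiyah-type" dimension function, normalized by $[G:H]$, takes integer values. The integrality would come from the torsion-freeness of $G$ via the criterion of \cite{FisherPeralta_Kaplansky'sZD3mfld}; that paper proved exactly this for $3$-manifold groups. The result then follows from Steps 1 to 4.
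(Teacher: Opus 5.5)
\textbf{There is a genuine gap in Step 4, which carries the whole content of the theorem.} Your Steps 1--3 are sound. In fact, the conclusion you attribute to the criterion of Fisher and S\'anchez-Peralta is the easy half. If $\mathcal{D}=\mathcal{D}_{E*H}*G/H$ is a division ring, then Hughes-freeness follows by a direct check: for finitely generated $K\le G$ and $U\triangleleft K$ with $K/U\cong\Z$, intersect with $H$, use Hughes-freeness of $\mathcal{D}_{E*H}$ for $U\cap H\triangleleft K\cap H$, and use $\mathcal{D}=\bigoplus_{gH}\mathcal{D}_{E*H}u_g$.

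What is missing is the proof that $\mathcal{D}_{E*H}*G/H$ has no zero divisors, and your sketch for it does not work.
\begin{enumerate}
\item The Strong Atiyah Conjecture concerns group algebras $k[G]$, for subfields $k\subseteq\C$, inside $\mathcal{U}(G)$. It says nothing about an arbitrary division ring $E$ (possibly of positive characteristic or non-commutative) or about an arbitrary twisted crossed product $E*G$. Hughes-free embeddability of $G$ quantifies over exactly these.
\item Your claim that every nonzero cyclic submodule has $\mathcal{D}_{E*H}$-dimension $[G:H]$ is just a restatement of the ring being a division ring.
\item Torsion-freeness of $G$ alone is not known to force it. Neither the Atiyah conjecture nor the zero-divisor conjecture is known to pass to torsion-free finite extensions in general.
\item Your version of the criterion takes the domain property as input. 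So invoking that same paper at this point, without stating and checking its hypotheses, is circular.
\end{enumerate}

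\textbf{What the paper does instead.} It first reduces to $G$ finitely generated, using Jaikin-Zapirain's Theorem 3.7(1) and Gr\"ater's Corollary 8.3. You skip this step, but the criterion needs it. It then takes $\mathcal{D}_{E*H}$ from Jaikin-Zapirain's theorem that free-by-cyclic groups are Lewin. Next it verifies two extra properties of $H$ that your proposal never mentions:
\begin{itemize}
\item $H$ is \emph{good} in the sense of Serre. Cavendish shows fundamental groups of compact $3$-manifolds are good, and Scott's core theorem transfers this to $H$.
\item $H$ has the \emph{factorization property}, by Schreve's Lemma 2.4.
\end{itemize}
Together with finite generation and finite cohomological dimension (needed for Theorem 3.7 of Friedl--Schreve--Tillmann), these are the hypotheses of Fisher and S\'anchez-Peralta's Theorem 4.2. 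That theorem outputs that $\mathcal{D}_{E*H}*G/H$ is a division ring and is the Hughes-free $E*G$-ring. It is a Linnell--Schick-type argument: roughly, goodness and the factorization property let one pass from the finite quotient $G/H$ to a torsion-free elementary amenable quotient, where Ore-domain methods work for arbitrary division rings $E$. These two properties are the missing idea in your Step 4.
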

\begin{proof}
	Fix a division ring $E$. By \cite[Theorem 3.7(1)]{JaikinZapirain2020THEUO} and \cite[Corollary 8.3]{Grater20}, we assume without loss of generality that $G$ is finitely generated. According to \cite[Theorem 1.1]{JaikinZapirain2020THEUO}, $H$ is a Lewin group, and so, there exists the Hughes-free division $E * H$-ring $\mathcal{D}_{E * H}$. 
	
	Now, by \cite[Section 3.5]{Cav_Good} the fundamental group of a compact $3$-manifold group is good (in the sense of Serre). Hence, it follows from Scott Core Theorem \cite{Scott_core} that $H$ is good. Moreover, $H$ has the factorization property according to \cite[Lemma 2.4]{Schreve_AtiyahVCS}. Thus, by \cite[Theorem 4.2]{FisherPeralta_Kaplansky'sZD3mfld} (we only need finite generation and finite cohomological dimension in \cite[Theorem 3.7]{FriedlSchreveTillmann_ThurstonFox}), there exists the Hughes-free division $E * G$-ring $\mathcal{D}_{E * G}$, and moreover, it is of the form
	\[
		\mathcal{D}_{E * G} = \mathcal{D}_{E * H} * G/H.
	\]
\end{proof}

Next, we examine the properties of the Hughes-free embedding. Recall that over a ring $R$, an $R$-module $L$ is \emph{coherent} if every finitely generated $R$-submodule $J$ of $L$ is in fact finitely presented as an $R$-module.

\begin{prop} \label{prop: coherence_D}
	Let $E$ be a division ring, $G$ a torsion-free $3$-manifold group that is virtually free-by-cyclic, and $E * G$ a crossed product. Then the Hughes-free division $E * G$-ring $\mathcal{D}_{E * G}$ is coherent.
\end{prop}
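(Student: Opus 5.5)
The plan is to reduce coherence of $\mathcal{D}_{E*G}$ from $G$ to the free-by-cyclic subgroup $H$, using the decomposition $\mathcal{D}_{E*G} = \mathcal{D}_{E*H} * G/H$ of \cref{thm: HF_vFbyZ}, and then to treat the free-by-cyclic case through the structure of $E*H$ as a twisted Laurent polynomial ring over $E*F$. Throughout I treat left modules; the right-module statement is symmetric.

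\emph{Step 1: reduction to $H$.} Since $[G:H]<\infty$, $E*G$ is a free left and right $E*H$-module of finite rank, with basis given by units $u_g$ for coset representatives $g$. Likewise $\mathcal{D}_{E*G}=\bigoplus_{gH} \mathcal{D}_{E*H}u_g$ as an $E*H$-module, and each summand $\mathcal{D}_{E*H}u_g$ is isomorphic to a twist of $\mathcal{D}_{E*H}$ by the automorphism induced by conjugation by $u_g$; such a twist is again a Hughes-free division ring of a crossed product $E*H$.

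Let $J\subseteq \mathcal{D}_{E*G}$ be a finitely generated $E*G$-submodule. Then $J$ is finitely generated over $E*H$. By the standard finite-index argument, a finitely generated $E*G$-module that is finitely presented over $E*H$ is finitely presented over $E*G$: the kernel of $(E*G)^n\to J$ is finitely generated over $E*H$, hence over $E*G$. So it suffices to show that finitely generated $E*H$-submodules of $\bigoplus_{i=1}^d \mathcal{D}_{E*H}$ are finitely presented. By induction on $d$, using the exact sequence $0\to J\cap \mathcal{D}^{d-1}\to J\to \pi_d(J)\to 0$, this reduces to two facts. First, $\mathcal{D}_{E*H}$ is coherent over $E*H$. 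Second, $E*H$ is coherent. The second fact guarantees that the kernel $J\cap\mathcal{D}^{d-1}$ is finitely generated once $\pi_d(J)$ is finitely presented.

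\emph{Step 2: the free-by-cyclic case.} Write $1\to F\to H\to \Z\to 1$ and $E*H=(E*F)[t^{\pm1};\sigma]$. By Hughes-freeness, $\mathcal{D}_{E*H}$ is the Ore division ring of fractions of $\mathcal{D}_{E*F}[t^{\pm1};\sigma]$, and $\mathcal{D}_{E*F}$ is Cohn's universal field of fractions of the fir $E*F$.

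Given finitely many elements $x_1,\dots,x_n\in \mathcal{D}_{E*H}$, write $x_i=p_iq^{-1}$ with a common right denominator $q$. Right multiplication by $q$ is an isomorphism of left modules, so we may assume $J\subseteq \mathcal{D}_{E*F}[t^{\pm1};\sigma]=\mathcal{D}_{E*F}\otimes_{E*F}E*H$. The generators have finitely many $t$-coefficients $c_1,\dots,c_m\in\mathcal{D}_{E*F}$.

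I would then use that $\mathcal{D}_{E*F}$ is a directed union of finitely generated $E*F$-submodules. Each of these is free, since $E*F$ is a fir and $\mathcal{D}_{E*F}$ is its universal localization, so finitely generated submodules of $\mathcal{D}_{E*F}$ are finitely presented and, being torsion-free of finite rank over a semifir, free. The $\sigma$-twisted translates of such a submodule $P$ generate $P\otimes_{E*F}E*H$, which is finitely generated and free over $E*H$. This places $J$ inside a finitely generated free $E*H$-module. Finite presentability of $J$ then follows from coherence of $E*H$.

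\emph{Main obstacle.} The crux is therefore the coherence of $E*H$ for $H$ free-by-cyclic, possibly with infinitely generated fibre. For this I would first try to invoke the coherence results of Jaikin-Zapirain and Linton for crossed products of groups of cohomological dimension at most $2$ with vanishing $b_2^{(2)}$; \cref{cor: vFbyZ_li} together with the virtual free-by-cyclicity gives exactly these hypotheses.

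Failing that, I would argue directly through the Mayer–Vietoris / HNN structure of $E*H$ over $E*F$. Since $H$ is finitely generated, it is an ascending HNN extension over a finitely generated free subgroup. One then shows that the relevant syzygies over $(E*F)[t^{\pm1};\sigma]$ are finitely generated, using that finitely generated $E*F$-submodules are free and that $t$-degrees are bounded.

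A secondary point to check carefully is that the twisted summands in Step 1 are again Hughes-free, so that the $H$-case applies to each of them.
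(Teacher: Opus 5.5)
\textbf{Step 2 contains a false claim.} Your Step 1 is the paper's reduction: regard $J$ as a finitely generated $E*H$-submodule of $\mathcal{D}_{E*H}^{d}$, then pull finite presentability back to $E*G$. Two of your side worries there are unnecessary. First, the kernel of a surjection from a finitely generated module onto a finitely presented one is always finitely generated, so the induction on $d$ does not need coherence of $E*H$. Second, $\mathcal{D}_{E*H}u_g\cong\mathcal{D}_{E*H}$ as left $E*H$-modules via right multiplication by $u_g^{-1}$, so no twisting occurs.

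The gap is the claim that finitely generated submodules of $\mathcal{D}_{E*F}$ are free, and with it the conclusion that $J$ lies in a finitely generated free $E*H$-module. Take $F=F(a,b)$, $x=a-1$, $y=b-1$, and $P=E[F]+E[F]\,x^{-1}y\subseteq\mathcal{D}_{E[F]}$. Any homomorphism $f$ from $P$ to a free module satisfies $x\,f(x^{-1}y)=y\,f(1)$. The augmentation ideal of $E[F]$ is $xE[F]\oplus yE[F]$, so this forces $f=0$. Thus $P\neq0$ is not free, not projective, and not even torsionless. The same computation, comparing $t$-coefficients, applies to $H=F\times\Z$ (the fundamental group of a pair of pants times $S^1$, which is within the scope of the statement). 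There, $J=E[H]+E[H]\,x^{-1}y\subseteq\mathcal{D}_{E[H]}$ admits no non-zero map to any free $E[H]$-module. Clearing denominators cannot help, because right multiplication by $q$ is an isomorphism.

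\textbf{How far a repair gets you.} Replace ``free'' by ``finitely presented''. That requires coherence of $\mathcal{D}_{E*F}$ over the fir $E*F$, which is true and known but not established by your justification. Then $E*H\otimes_{E*F}P$ is a finitely presented module containing $J$, and coherence of the \emph{ring} $E*H$ would finish the proof. However, everything then rests on ring-theoretic coherence of $E*H$ for arbitrary crossed products over division rings and free-by-cyclic $H$ with possibly infinitely generated fibre. You leave this as the main obstacle, with only a tentative pointer to Jaikin-Zapirain--Linton and a sketch of an HNN argument.

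The paper avoids this entirely. It cites \cite[Proposition 3.9]{HennekeLopez_pseudoSylvester}, which gives coherence of $\mathcal{D}_{E*H}$ as an $E*H$-module directly, $E*H$ being a skew Laurent polynomial ring over the fir $E*F$. After that, only the finite-index reduction is needed.
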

\begin{proof}
	Let $L$ be a finitely generated $E * G$-submodule of $\mathcal{D}_{E * G}$, which is of the form $\mathcal{D}_{E * H} * G/H$ by \cref{thm: HF_vFbyZ}. Note that $L$ is also a finitely generated $E * H$-submodule of $\mathcal{D}_{E * H}^d$, where $d = \lvert G : H \rvert$. By \cite[Proposition 3.9]{HennekeLopez_pseudoSylvester} $\mathcal{D}_{E * H}$, and hence $\mathcal{D}_{E * H}^d$, is coherent as $E * H$-module. Therefore, $L$ is finitely presented as an $E * H$-module, and consequently as an $E * G$-module.
\end{proof}

Recall that an $R$-module $L$ is of \emph{weak dimension at most $1$} if for every right $R$-module $J$ the module $\Tor_2^R(J, L)$ vanishes. Similarly, an $R$-module $L$ is of \emph{projective dimension at most $1$} if for every $R$-module $J$ the module $\Ext_R^2(L, J)$ vanishes. There is a corresponding right-module version of the following result:

\begin{prop} \label{prop: homology_D}
	Let $E$ be a division ring, $G$ a torsion-free $3$-manifold group that is virtually free-by-cyclic, and $E * G$ a crossed product. Then the Hughes-free division $E * G$-ring $\mathcal{D}_{E * G}$ satisfies the following properties:
	\begin{enumerate}[label=(\roman*)]
		\item $\Tor_1^{E * G}(\mathcal{D}_{E * G}, \mathcal{D}_{E * G}) = \{0\}$.
		\item The $E*G$-module $\mathcal{D}_{E * G}$ is of weak dimension at most $1$.
		\item Every finitely generated $E * G$-submodule of $\mathcal{D}_{E * G}$ is of projective dimension at most $1$.
	\end{enumerate}
\end{prop}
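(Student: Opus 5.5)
The plan is to reduce all three statements to the finite-index free-by-cyclic subgroup $H$, using the decomposition $\mathcal{D}_{E*G} = \mathcal{D}_{E*H} * G/H$ from \cref{thm: HF_vFbyZ}, and then to invoke the corresponding results for $E*H$ due to Henneke and L\'opez-\'Alvarez and Jaikin-Zapirain.

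The basic observation is that $E*G$ is a crossed product $(E*H)*(G/H)$. Hence $E*G$ is free of rank $d=\lvert G:H\rvert$ as a left and as a right $E*H$-module. Restriction from $E*G$ to $E*H$ therefore preserves projectivity and flatness. Moreover, induction $E*G\otimes_{E*H}-$ is exact. As an $E*H$-bimodule, $\mathcal{D}_{E*G}\cong \bigoplus_{t} \mathcal{D}_{E*H}\, u_t$, a finite sum over coset representatives $t$ of $H$ in $G$.

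For (ii), take a projective resolution of $\mathcal{D}_{E*G}$ over $E*G$ and restrict it to $E*H$. This gives $\Tor_n^{E*G}(J,\mathcal{D}_{E*G})$ as a quotient, or direct summand, of $\Tor_n^{E*H}(J,\mathcal{D}_{E*G})$. Concretely, one can use the standard Shapiro-type argument: $J\otimes_{E*G}-$ applied to a resolution over $E*G$ is a direct summand of $J\otimes_{E*H}-$ applied to the same complex, because the characteristic of $E$ may divide $d$. If that transfer fails, I would argue directly instead: $\mathcal{D}_{E*G}$ is a finite direct sum of twisted copies $\mathcal{D}_{E*H}u_t$, each of weak dimension at most $1$ over $E*H$ by \cite{HennekeLopez_pseudoSylvester}. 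Then I would need that weak dimension over $E*G$ is bounded by weak dimension over the finite-index subring $E*H$. For crossed products with a finite group this holds when the module is flat-dimension bounded on restriction and $G$ is torsion-free of $\cd\le 2$. The cleanest route is via \cref{cor: vFbyZ_li}: $E*G$ has global weak dimension at most $2$, and $\mathcal{D}_{E*G}$ is a directed union of finitely generated submodules $L$. For (ii) it therefore suffices to prove (iii), since $\Tor$ commutes with direct limits.

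For (iii), let $L\subseteq\mathcal{D}_{E*G}$ be finitely generated. By \cref{prop: coherence_D}, $L$ is finitely presented. Take $0\to J\to (E*G)^n\to L\to 0$ with $J$ finitely generated. Restricted to $E*H$, $L$ is a finitely generated submodule of $\mathcal{D}_{E*H}^d$, which is a finite extension of submodules of copies of $\mathcal{D}_{E*H}$. By \cite{HennekeLopez_pseudoSylvester} each such submodule has projective dimension at most $1$ over $E*H$, so $J$ is projective over $E*H$. Since $\cd(G)\le 2<\infty$ and $G$ is torsion-free, Serre's theorem argument \cite{Serre_CohomologyFI} upgrades this to $J$ projective over $E*G$. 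More precisely: $J$ has finite projective dimension over $E*G$, because $E*G$ has finite global dimension ($\cd G\le 2$), and a module of finite projective dimension that is projective over a finite-index subgroup crossed product is projective. This is the crossed-product version of the standard fact. I expect this upgrade to be the main obstacle, and would prove it by the Serre/Eckmann argument: if $\Ext^k_{E*G}(J,-)\neq 0$ for top $k\geq 1$, then this top $\Ext$ is right exact and is detected on induced modules, where it vanishes by Shapiro's lemma.

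For (i), the same argument applies. $\Tor_1^{E*G}(\mathcal{D},\mathcal{D})$ is the direct limit of $\Tor_1^{E*G}(\mathcal{D},L)$ over finitely generated $L$. Using $0\to J\to (E*G)^n\to L\to 0$, it equals the kernel of $\mathcal{D}\otimes J\to\mathcal{D}^n$. Since $J$ is finitely generated projective, I would compare with the $E*H$ version, $\Tor_1^{E*H}(\mathcal{D}_{E*H},\mathcal{D}_{E*H})=0$ \cite{HennekeLopez_pseudoSylvester}. The identification $\mathcal{D}_{E*G}\otimes_{E*G}M\cong \mathcal{D}_{E*H}\otimes_{E*H}M$ as abelian groups, valid because $\mathcal{D}_{E*G}=\mathcal{D}_{E*H}\otimes_{E*H}E*G$, gives $\Tor^{E*G}_1(\mathcal{D}_{E*G},M)\cong\Tor^{E*H}_1(\mathcal{D}_{E*H},M)$. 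Applying this with $M=\mathcal{D}_{E*G}\cong\mathcal{D}_{E*H}^d$ as a left $E*H$-module yields (i). This identification also gives (ii) directly, avoiding the transfer issue above.
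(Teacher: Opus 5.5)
Your proposal is correct. For (i) and (ii), the argument you finally settle on is the paper's Shapiro-lemma argument: you identify $\Tor_n^{E*G}(\mathcal{D}_{E*G},M)\cong\Tor_n^{E*H}(\mathcal{D}_{E*H},M)$, using $\mathcal{D}_{E*G}\cong\mathcal{D}_{E*H}\otimes_{E*H}E*G$ and that $E*G$ is free of finite rank over $E*H$, and then apply the Henneke--L\'opez-\'Alvarez results for $E*H$. Your alternative derivation of (ii) from (iii) via direct limits is also valid.

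For (iii) you take a genuinely different route.
\begin{itemize}
\item \emph{The paper} stays over $E*G$. It uses (ii) and the fact that $E*G$ has global dimension at most $2$ to see that $L$ has weak dimension at most $1$, so $J$ is flat. Then \cref{prop: coherence_D} makes $J$ finitely presented, and finitely presented flat modules are projective.
\item \emph{You} descend to $E*H$, where $J$ is projective because $L$ has projective dimension at most $1$ there. You then lift projectivity back to $E*G$ by the Serre--Eckmann top-$\Ext$ argument. It uses three facts: $J$ has finite projective dimension since $E*G$ has finite global dimension; the top $\Ext$ is right exact, hence nonzero on some free module; and free $E*G$-modules are coinduced from $E*H$ because the index is finite, so Shapiro kills that $\Ext$.
\end{itemize}
Your route needs neither (ii) nor the flat-implies-projective step, and it isolates a reusable principle: projective over a finite-index sub-crossed-product plus finite projective dimension implies projective. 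The paper's route avoids the descent and lift and is shorter once (ii) is available.

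There is one step you should fill in. You pass from finitely generated submodules of $\mathcal{D}_{E*H}$ to a finitely generated submodule of $\mathcal{D}_{E*H}^d$ by a filtration. The successive quotients, for example the images of the coordinate projections, must themselves be finitely presented and the intersections finitely generated. This needs the coherence of $\mathcal{D}_{E*H}$ (Henneke--L\'opez-\'Alvarez, Proposition 3.9, as cited in the paper); with that, the induction goes through.

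You should also drop your opening discussion of (ii). The claim that a transfer or direct-summand argument works ``because the characteristic of $E$ may divide $d$'' is backwards: that is exactly when transfer fails. It is superfluous anyway, since both of your later arguments for (ii) are sound.
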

\begin{proof}
	Let $L$ be a right $E * G$-module. By \cref{thm: HF_vFbyZ}, using the Shapiro lemma, we get that
	\[
	\Tor_n^{E * G}(L, \mathcal{D}_{E * G}) \cong \Tor_n^{E *H}(L, \mathcal{D}_{E *H})
	\]
	for every non-negative integer $n$. To show \emph{(i)}, we use that the module $\Tor_1^{E * H}(\mathcal{D}_{E * H}, \mathcal{D}_{E * H})$ vanishes by \cite[Lemma 3.8(7)]{HennekeLopez_pseudoSylvester}. Hence,
	\[
	\Tor_1^{E * G}(\mathcal{D}_{E * G}, \mathcal{D}_{E * G}) \cong \bigoplus_{\lvert G: H \rvert} \Tor_1^{E * H}(\mathcal{D}_{E * H}, \mathcal{D}_{E * H}) = \{0\}.
	\]
	For the proof of \emph{(ii)}, note that every module is a directed union of its finitely generated submodules. Thus, it follows from \cite[Lemma 3.7(4)]{HennekeLopez_pseudoSylvester} that $\mathcal{D}_{E * H}$ is of weak dimension at most $1$ as an $E * H$-module, and so
	\[
	\Tor_2^{E * G}(L, \mathcal{D}_{E * G}) \cong \Tor_2^{E *H}(L, \mathcal{D}_{E *H}) = \{0\}.
	\]
	To show \emph{(iii)} assume that $L$ is a finitely generated $E * G$-submodule of $\mathcal{D}_{E * G}$. By \cref{prop: coherence_D}, $L$ is finitely presented. Moreover, since $E * G$ has global dimension at most $2$ and $\mathcal{D}_{E * G}$ is of weak dimension at most $1$, $L$ is also of weak dimension at most $1$. Thus, for any exact sequence of $E * G$-modules
	\[
	0 \to J \to (E* G)^n \to L \to 0,
	\]
	$J$ is finitely generated and flat. Another application of \cref{prop: coherence_D} shows that $J$ is also finitely presented, and hence, $J$ is a projective $E * G$-module by \cite[Theorem 3.63]{Rotman09} as we wanted.
\end{proof}

We will further need the following result on the structure of finitely generated projective $E * G$-modules.

\begin{thm}\label{thm: FJ_vFbyZ}
	Let $E$ be a division ring, $G$ a $3$-manifold group and $E * G$ a crossed product. Then every finitely generated projective $E * G$-module is stably free.
\end{thm}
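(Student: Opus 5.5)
The plan is to deduce the statement from the $K$-theoretic Farrell--Jones conjecture, which is known for $3$-manifold groups with coefficients in arbitrary additive categories (Bartels--Farrell--L\"uck, building on Roushon and on the results for CAT(0) and hyperbolic groups). The coefficient version is what lets us handle the twisted group ring $E * G$ rather than only $E[G]$. Throughout, as in the rest of this section, I will take $G$ to be torsion-free. For groups with torsion the statement fails already for $\Q[\Z/2] \cong \Q \times \Q$, so torsion-freeness is an implicit standing hypothesis here.

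The first step is a reduction to finitely generated $G$. A finitely generated projective $E*G$-module $P$ is the image of an idempotent matrix $e \in \Mat_n(E*G)$. Only finitely many group elements occur in the entries of $e$, so $e$ already lies in $\Mat_n(E*G_0)$ for some finitely generated subgroup $G_0 \leqslant G$. If the image $P_0$ of $e$ over $E*G_0$ is stably free, then so is $P \cong E*G \otimes_{E*G_0} P_0$. By Scott's core theorem \cite{Scott_core}, $G_0$ is the fundamental group of a compact $3$-manifold, and it is still torsion-free. Hence we may assume that $G$ is finitely generated, indeed the fundamental group of a compact $3$-manifold. Alternatively, one can use that the Farrell--Jones conjecture passes to directed colimits.

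Next, I will apply the Farrell--Jones conjecture with coefficients in the additive category attached to the crossed product. This says that the assembly map
\[
H_0^G(E_{\mathcal{VC}}G; \mathbf{K}_E) \to K_0(E * G)
\]
is an isomorphism, where $\mathcal{VC}$ is the family of virtually cyclic subgroups. Since $G$ is torsion-free, the virtually cyclic subgroups are trivial or infinite cyclic. For these subgroups the relevant crossed products are $E$ and twisted Laurent rings $E_\alpha[t^{\pm1}]$, both of which are regular, since they are Noetherian of finite global dimension. For regular coefficient rings the Nil-terms vanish (Waldhausen's theorem together with the Bartels--Reich/L\"uck reduction from $\mathcal{VC}$ to $\mathcal{FIN}$), so the family can be reduced to the trivial family. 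This gives
\[
K_0(E*G) \cong H_0(BG; \mathbf{K}_E).
\]
The Atiyah--Hirzebruch spectral sequence computes this group from $H_p(G; K_q(E))$ with $p+q=0$. Since $E$ is a division ring, it is regular, so $K_q(E)=0$ for $q<0$. Therefore the only contribution is $H_0(G; K_0(E)) = K_0(E) = \Z$. It follows that $K_0(E*G) \cong \Z$, generated by $[E*G]$, equivalently $\widetilde{K}_0(E*G)=0$. Hence every finitely generated projective $P$ satisfies $[P] = m[E*G]$ in $K_0$, meaning that $P \oplus (E*G)^s \cong (E*G)^{m+s}$ for some $s$. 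This is exactly the statement that $P$ is stably free.

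I expect the main obstacle to be the second step: justifying carefully that the Farrell--Jones conjecture with coefficients covers twisted crossed products $E*G$, and not just untwisted group rings, and that the vanishing of Nil-terms holds for the twisted Laurent extensions $E_\alpha[t^{\pm1}]$. I would handle this by appealing to the additive-category formulation of Bartels--Reich, in which crossed products with twisting cocycles are explicitly included, and then to the regularity of $E_\alpha[t^{\pm1}]$, which follows from the twisted Hilbert basis and syzygy theorems. Once these are in place, the remaining computation is routine.
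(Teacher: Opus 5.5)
Your argument is correct and is essentially the paper's proof. The paper likewise invokes the $K$-theoretic Farrell--Jones Conjecture with coefficients in additive categories for $3$-manifold groups (Roushon together with Bartels--Farrell--L\"uck), and then cites Henneke--L\'opez-\'Alvarez [Proposition 4.2] for the deduction $\widetilde{K}_0(E*G)=0$ that you carry out by hand: Nil-term vanishing for the regular rings $E$ and $E_\alpha[t^{\pm1}]$, reduction to the trivial family, and the Atiyah--Hirzebruch computation. Your torsion-freeness caveat is also right ($\pi_1(\R P^3)=\Z/2$ is a counterexample to the statement as literally written), and it is covered by the section's standing assumption that $G$ is torsion-free and virtually free-by-cyclic.
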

\begin{proof}
	By combined work of Roushon and Bartels--Farrell--L\"uck  \cite{Roushon_FJ3mfld,BartelsFarrellLuck_FJlattices} $G$ satisfies the $K$-theoretic Farrell--Jones Conjecture with coefficients in an additive category. Hence, the statement follows from \cite[Proposition 4.2]{HennekeLopez_pseudoSylvester}.
\end{proof}

We conclude the section showing that $E * G$ is a pseudo-Sylvester domain and that $G$ is a Lewin group.

\begin{proof}[Proof of \cref{thm: univ_vFbyZ}]
	Consider the Hughes-free division $E * G$-ring $\mathcal{D}_{E * G}$ given by \cref{thm: HF_vFbyZ}. We will check that it satisfies the conditions of \cref{thm: pseudoSyl_criterion}. By \cref{prop: coherence_D,prop: homology_D} the module $\Tor_1^{E * G}(\mathcal{D}_{E *G}, \mathcal{D}_{E * G})$ is trivial and every finitely generated $E * G$-submodule $L$ of $\mathcal{D}_{E * G}$ is a finitely presented module of projective dimension at most $1$. Thus, for any exact sequence of $E * G$-modules
	\[
	0 \to J \to (E * G)^n \to L \to 0,
	\]
	we conclude that $J$ is a finitely generated projective module. Hence, by \cref{thm: FJ_vFbyZ}, $J$ is stably free. Therefore, \cref{thm: pseudoSyl_criterion} implies that $E * G$ is a pseudo-Sylvester domain and that the Hughes-free division $E * G$-ring $\mathcal{D}_{E * G}$  is the universal division $E * G$-ring of fractions. In particular, $G$ is a Lewin group.
\end{proof}

\subsection{Locally indicable \texorpdfstring{$3$}{3}-manifold groups}

Building on the work of the previous section, we now show that locally indicable $3$-manifold groups are Lewin. Importantly, this class of groups includes every fundamental group $\pi_1(M)$ of an admissible $3$-manifold $M$ with positive first Betti number.

\begin{prop}\label{prop: loc_ind_3mfld}
	Let $M$ be a compact, connected $3$-manifold with $\pi_1(M)$ locally indicable. Then $\pi_1(M)$ is an extension of a locally \{virtually free-by-cyclic\} group and $\Z$.
\end{prop}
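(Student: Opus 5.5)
Since $G = \pi_1(M)$ is locally indicable and $M$ is compact, $G$ is finitely generated. Unless $G$ is trivial, local indicability gives an epimorphism $\phi\colon G \to \Z$. I will take this $\phi$ and let $K = \ker \phi$, so that $1 \to K \to G \to \Z \to 1$ is exact. (If $G$ is trivial, it is the extension of the trivial group by $\Z$ only in a degenerate sense; I would treat it separately or note it is vacuous.) The task is then to show that every finitely generated subgroup $K_0 \leqslant K$ is virtually free-by-cyclic.

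The argument has three steps.

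\textbf{Step 1: reduce to a $3$-manifold subgroup.} Since $K_0$ has infinite index in $G$, consider the covering space $\widetilde{M}_{K_0} \to M$ corresponding to $K_0$. By the Scott Core Theorem \cite{Scott_core}, this cover contains a compact core $C$ with $\pi_1(C) = K_0$. So $K_0$ is the fundamental group of a compact $3$-manifold, and it is torsion-free because $G$ is locally indicable.

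\textbf{Step 2: show $\cd_{\Q}(K_0) \leq 2$.} I would first pass to the prime decomposition: a locally indicable group has no finite subgroups, so after discarding $S^3$ summands the pieces have torsion-free fundamental groups. The case split is then as follows.
\begin{itemize}
\item If $M$ (after compressing the boundary) is closed and aspherical, then $G$ is a $PD_3$-group. The infinite-index subgroup $K_0$ of a $PD_3$-group has $\cd \leq 2$ by Strebel's theorem.
\item If $M$ has non-spherical boundary, or is a connected sum, then the pieces have $\cd \leq 2$ or are free products. Infinite-index subgroups of closed aspherical pieces again have $\cd \leq 2$ by Strebel. The Kurosh subgroup theorem then gives $\cd(K_0) \leq 2$.
\end{itemize}
Non-aspherical prime pieces with infinite $\pi_1$ and torsion-free group are $S^1\times S^2$, which contributes a $\Z$ factor and causes no problem.

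\textbf{Step 3: conclude.} The compact manifold $C$ has torsion-free fundamental group $K_0$ with $\cd_{\Q}(K_0) \leq 2$. By \cite[Theorem 1.1]{KielakLinton_3mfldAtiyah}, as in the proof of \cref{prop: cd2_li}, $K_0$ is virtually free-by-cyclic. Hence $K$ is locally \{virtually free-by-cyclic\}, which is the claim.

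The main obstacle is Step 2. The difficulty is handling non-orientable, non-irreducible and boundary-compressible $M$ uniformly, and making sure that infinite index in $G$ really forces $\cd \leq 2$ in every piece of the decomposition. My first attempt would be to run Strebel's theorem on the closed aspherical prime factors and use Kurosh for the free product structure. If non-orientability causes trouble, I would pass to the orientation double cover and use that $\cd$ is unchanged on finite-index subgroups of torsion-free groups \cite{Serre_CohomologyFI}.
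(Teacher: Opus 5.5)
Your outline (take the kernel of a map onto $\Z$, show it has $\cd\le 2$, then apply \cite[Theorem 1.1]{KielakLinton_3mfldAtiyah} to its finitely generated subgroups via the Scott core) matches the paper's. The genuine gap is that you take an \emph{arbitrary} epimorphism $\phi\colon G\to\Z$, and for an arbitrary $\phi$ the conclusion is false. Take $M=T^3\#(S^1\times S^2)$, so that $G=\Z^3*\Z$ is locally indicable. Let $\phi$ vanish on $\Z^3$ and be the identity on $\Z$. Then $\Z^3\leqslant\ker\phi$ is finitely generated of cohomological dimension $3$, so it is not virtually free-by-cyclic.

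The step that breaks is Step~2. The fact that $K_0$ has infinite index in $G$ does not imply that $K_0\cap gAg^{-1}$ has infinite index in $gAg^{-1}$ for a closed aspherical prime factor $A=\pi_1(M_i)$. That is exactly what Strebel's theorem needs before Kurosh can be applied. In the example, $K_0=\Z^3$ has infinite index in $G$ but equals a whole factor.

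The missing idea is to construct $\phi$ from the prime decomposition $G\cong F*(*_{i=1}^k\pi_1(M_i))$:
\begin{enumerate}
\item Local indicability gives epimorphisms $\varphi_i\colon\pi_1(M_i)\to\Z$.
\item Let $\phi$ restrict to $\varphi_i$ on each $\pi_1(M_i)$, and define it arbitrarily on $F$. Then $\phi$ is non-trivial on every conjugate of every aspherical factor.
\item Bass--Serre theory writes $\ker\phi$ as a free product of a free group and conjugates of the $\ker\varphi_i$.
\item Each $\ker\varphi_i$ is the fundamental group of an infinite cyclic cover of the compact aspherical $M_i$, so it has $\cd\le 2$. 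In the closed case this is Strebel applied to an infinite-index subgroup; in the bounded case $\cd(\pi_1(M_i))\le 2$ already.
\end{enumerate}
Hence $\cd(\ker\phi)\le 2$, and your Steps~1 and~3 then go through unchanged for every finitely generated subgroup of $\ker\phi$.

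Your caveat about $G=1$ is fair: the statement tacitly needs $G\neq 1$.
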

\begin{proof}
	Since any $3$-manifold that is prime but not irreducible is homeomorphic to $S^2 \times S^1$ or $S^2 \widetilde{\times} S^1$, which have fundamental group $\Z$, by the Prime Decomposition Theorem we obtain a finitely generated free group $F$ and finitely many compact, connected and irreducible $3$-manifolds $M_1, \ldots, M_k$ such that $G \cong F \ast (\ast_{i=1}^k \pi_1(M_i))$. Moreover, since $G$ is torsion-free, each $M_i$ is aspherical by the Sphere Theorem. 
	
	Now, each $\pi_1(M_i)$ is locally indicable and finitely generated, so there exists a map $\varphi_i$ from $\pi_1(M_i)$ onto $\Z$. These maps induce a map $\varphi$ from $G$ onto $\Z$ that sends the elements of $F$ to $0$ and the elements of $\pi_1(M_i)$ via $\varphi_i$. We claim that $H := \ker \varphi$ is locally \{virtually free-by-cyclic\}. Indeed, denote by $\widehat{M_i}$ the infinite cyclic cover associated to $\varphi_i$. Since $M_i$ is compact and aspherical, $\cd(\pi_1(\widehat{M_i})) \leq 2$. Thus, it follows from Bass-Serre theory that $H$ splits as a free product of groups of cohomological dimension at most $2$. In particular, $\cd(H) \leq 2$, and hence, by \cite[Theorem 1.1]{KielakLinton_3mfldAtiyah}, $H$ is locally \{virtually free-by-cyclic\}.
\end{proof}

\begin{proof}[Proof of \cref{thm: 3mfld_Lewin}]
	Let $M$ be a connected $3$-manifold and assume that $G := \pi_1(M)$ is locally indicable. By \cite[Theorem 3.7(1)]{JaikinZapirain2020THEUO} it suffices to consider the case where $G$ is finitely generated. So, by Scott Core Theorem \cite{Scott_core}, we may assume that $M$ is compact. Now, by \cref{prop: loc_ind_3mfld} we have that $G$ is a group extension of a locally \{virtually free-by-cyclic\} group $H$ and $\Z$. According to \cite[Theorem 3.7(3)]{JaikinZapirain2020THEUO}, it is enough to show that $H$ is a Lewin group. But another application of \cite[Theorem 3.7(1)]{JaikinZapirain2020THEUO} reduces the proof to showing that $3$-manifold groups that are torsion-free and virtually free-by-cyclic are Lewin groups. Therefore, the result follows from \cref{thm: univ_vFbyZ}. 
\end{proof}

\begin{cor} \label{cor: admissible_Lewin}
	Let $M$ be an admissible $3$-manifold such that $b_1(\pi_1(M))$ is positive. Then $\pi_1(M)$ is a Lewin group.
\end{cor}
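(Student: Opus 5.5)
By \cref{thm: 3mfld_Lewin}, it suffices to show that $G := \pi_1(M)$ is locally indicable. Being admissible, $M$ is compact, connected, orientable and irreducible, with empty or toroidal boundary and infinite fundamental group. The plan is to show first that $M$ is aspherical and $G$ torsion-free, and then to produce the required surjections onto $\Z$ for finitely generated subgroups.

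\textbf{Asphericity and torsion-freeness.} Irreducibility together with the Sphere Theorem gives $\pi_2(M) = 0$. Since $G$ is infinite, the universal cover $\widetilde M$ is a non-compact simply connected $3$-manifold, so $H_3(\widetilde M) = 0$. Its homology also vanishes above degree $3$, and by Hurewicz $\widetilde M$ is contractible. Hence $M$ is aspherical and $G$ is torsion-free, since it has finite cohomological dimension.

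\textbf{Local indicability.} Let $K \leqslant G$ be a non-trivial finitely generated subgroup. There are two cases.
\begin{enumerate}[label=(\alph*)]
	\item If $K$ has infinite index, the associated cover $M_K$ is non-compact. By the Scott Core Theorem it contains a compact core $C$ with $\pi_1(C) = K$. The core is irreducible after capping off sphere boundary components, which is possible because $M_K$ is irreducible as a cover of an aspherical manifold. Since $K$ is torsion-free and non-trivial, $C$ is not a ball, so $C$ is a compact aspherical $3$-manifold with non-empty boundary containing a component of positive genus. The half-lives-half-dies argument then gives $b_1(C) \geq 1$, so $K$ surjects onto $\Z$.
	\item If $K$ has finite index, the hypothesis $b_1(\pi_1(M)) > 0$ is used. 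A class $\phi \colon G \to \Z$ restricts to a non-zero homomorphism on $K$, because its image is a finite-index subgroup of $\phi(G) \neq 0$. Hence $K$ surjects onto $\Z$ as well.
\end{enumerate}

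Alternatively, one can invoke the Howie--Short type result directly: the fundamental group of a compact irreducible $3$-manifold with $b_1 > 0$ is locally indicable. This is the cited known fact, and it makes the case analysis above unnecessary.

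\textbf{Main obstacle.} I expect the main obstacle to be case (a), specifically making sure the core has positive $b_1$ even though $M_K$ may have sphere or $\mathbb{RP}^2$ issues. Orientability and asphericity of $M$ rule these out, so I would handle it by citing the standard lemma that a compact orientable $3$-manifold with non-empty boundary that is not a union of spheres has $b_1 \geq 1$. With local indicability established, \cref{thm: 3mfld_Lewin} yields that $G$ is a Lewin group.
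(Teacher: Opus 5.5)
Your proposal is correct and follows the paper's proof: reduce via \cref{thm: 3mfld_Lewin} to local indicability of $\pi_1(M)$, which the paper obtains by citing the Howie--Short result \cite[Theorem 3.1]{BRW_orderable3gps}, and your case analysis is essentially the standard proof of that cited fact. One small remark: in case (a) you do not need the capped-off core to be irreducible or aspherical, since it is a compact orientable submanifold of the non-compact $M_K$ with $\pi_1 \cong K$ and no sphere boundary components, so some boundary component has positive genus and half-lives-half-dies already gives $b_1 \geq 1$.
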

\begin{proof}
	By work of Howie and Short \cite[Theorem 3.1]{BRW_orderable3gps} $\pi_1(M)$ is locally indicable. Therefore, $\pi_1(M)$ is a Lewin group by \cref{thm: 3mfld_Lewin}.
\end{proof}

\section{Inequality of the Thurston norm} \label{sec: Thurston_ineq}

In this section we show \cref{thm: ineq_thurston3}. Our strategy follows the method of Friedl and L\"uck in \cite{FriedlLuck_euler} where they introduce the notion of $(\mu, \phi)$-$L^2$-Euler characteristic. Let us explain the connection between $\mathcal{D}$-Betti numbers and the $(\mu, \phi)$-$L^2$-Euler characteristic. We begin by recalling its definition. Let $X$ be a connected CW-complex and let $H$ denote its fundamental group. Let $\mu \colon H \to G$ and $\phi \colon G \to \Z$ be group homomorphism, and let $\overline{X} \to X$ be the $G$-covering associated to $\mu$. For every $n \geq 0$, define
\[
	b_n^{(2)}(\overline{X}; \VN(G); \phi) := \dim_{\VN(G)} \left( H_n \left( \VN(G) \otimes_{\Z[G]} \left( C_{*}(\overline{X}) \otimes_{\Z} \Z[\Z] \right) \right) \right)
\]
where $C_{*}(\overline{X})$ is the cellular $\Z[G]$-chain complex of $\overline{X}$, $G$ acts on $\Z[\Z]$ via $\phi$ and $G$ acts on $ C_{*}(\overline{X}) \otimes_{\Z} \Z[\Z]$ diagonally. If the sum $\sum_{n \geq 0} b_n^{(2)}(\overline{X}; \VN(G); \phi)$ is finite, then the \emph{$(\mu, \phi)$-$L^2$-Euler characteristic} is defined as the value
\[
	\chi^{(2)}(X; \mu, \phi) := \sum_{n \geq 0} (-1)^n  b_n^{(2)}(\overline{X}; \VN(G); \phi).
\]
Now, if $\phi$ is non-trivial, then the image of $\phi$ has finite index in $\Z$. Let $m_{\phi}$ be the index and let $U_1$ denote the kernel of $\phi$. As $\Z[G]$-module, $\Z[\Z]$ is isomorphic to $\Z[G/U_1]^{m_{\phi}}$. Thus, if $i^{*} \overline{X}$ denotes the $U_1$-CW-complex obtained from $\overline{X}$ by restriction of the action of $G$, for every $n \geq 0$
\[
	b_n^{(2)}(\overline{X}; \VN(G); \phi) = m_{\phi} \dim_{\VN(U_1)} \left( H_n \left( \VN(U_1) \otimes_{\Z[U_1]} C_{*}(i^{*}\overline{X}) \right) \right) 
\]
(see, for instance, \cite[Lemma 2.6]{FriedlLuck_euler} for more details). Set $\phi_2 := \phi \circ \mu$ and denote by $U_2$ the kernel of $\phi_2$. As right $\Z[H]$-modules
\[
	\VN(U_1) \otimes_{\Z[U_2]} \Z[H] \cong \VN(U_1) \otimes_{\Z[U_1]} \Z[G]
\]
where $U_2$ acts on $\VN(U_1)$ via $\mu$. Let $\widetilde{X} \to X$ be the $H$-covering. Then, since $C_{*}(\overline{X})$ is given by the complex $\Z[G] \otimes_{\Z[H]} C_{*}(\widetilde{X})$, we get that
\[
	\dim_{\VN(U_1)} \left( H_n \left( \VN(U_1) \otimes_{\Z[U_1]}  C_{*}(i^{*}\overline{X}) \right) \right)
\]
equals to
\[
	\dim_{\VN(U_1)} \left( H_n \left( \VN(U_1) \otimes_{\Z[U_2]}  C_{*}(i^{*} \widetilde{X}) \right) \right).
\]
Thus, putting all together we get that
\[
	\chi^{(2)}(X; \mu, \phi) = m_{\phi} \sum_{n \geq 0} (-1)^n \dim_{\VN(U_1)} \left( \Tor_n^{\Z[U_2]}(\VN(U_1), \Z)\right).
\]
Note that if $U_1$ satisfies the Strong Atiyah Conjecture, then it holds that the $(\mu, \phi)$-$L^2$-Euler characteristic is equal to the following alternate sum of $\mathcal{D}$-Betti numbers
\begin{equation}\label{eqtn: twisted_L2_Euler}
	\chi^{(2)}(X; \mu, \phi) = m_{\phi} \sum_{n \geq 0} (-1)^n b_n^{\mathcal{D}_{U_1}}(U_2).
\end{equation}
In any case, if $\mu = \id_H$ and $\phi$ is surjective, then $U_1 = U_2$ and $m_{\phi} = 1$, so under these circumstances $\chi^{(2)}(X; \mu, \phi)$ is simply $\chi^{(2)}(U_1)$ (recall that $U_1 = \ker \phi$). If in addition, we take $X$ to be an aspherical $3$-manifold, then $U_1$ is associated to an infinite cyclic cover, and hence $\cd(U_1) \leq 2$. Therefore, as a consequence of \cite[Theorem 1.1]{KielakLinton_3mfldAtiyah}, we get that $b_n^{(2)}(U_1) = 0$ for all $n \neq 1$. Thus, \cite[Theorem 0.2]{FriedlLuck_euler} states that for an admissible $3$-manifold $M$ and a surjective $\phi \in H^{1}(M;\Z)$
\begin{equation} \label{eqtn: Thurston_beta1}
	x_M(\phi) = \chi^{(2)}(M; \id, \phi) = b_{1}^{(2)}(\ker \phi).
\end{equation}
This equality reduces \cite[Conjecture 0.5]{FriedlLuck_euler} to an inequality of first $\mathcal{D}$-Betti numbers. As we mentioned in the introduction, our approach reframes this inequality as a problem concerning the existence of a universal epic division $\Q[G]$-ring. Since we have proved in \cref{cor: admissible_Lewin} that every fundamental group $G$ of an admissible $3$-manifold with positive $b_1(G)$ is a Lewin group, we are able to eliminate the residually-\{locally indicable and elementary amenable\} condition on the group structure and establish \cref{thm: ineq_thurston3} in full generality.

\begin{thm} \label{thm: L2Euler_inequality}
	Let $f \colon M \to N$ be a map between admissible $3$-manifolds which induces an epimorphism on the fundamental groups and an isomorphism $f_{*} \colon H_n(M; \Q) \to H_n(N; \Q)$ for every non-negative integer $n$. Denote by $H$ and $G$ the fundamental group of $M$ and $N$, respectively, and let $\phi \colon G \to \Z$ be a non-trivial group homomorphism. Then 
	\[
		b_1^{\mathcal{D}_G}(H) = 0.
	\]
\end{thm}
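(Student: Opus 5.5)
We plan to read the statement with $H$ acting on $\mathcal{D}_G$ through $f_* \colon H \twoheadrightarrow G$, so that $b_1^{\mathcal{D}_G}(H) = \dim_{\mathcal{D}_G} \Tor_1^{\Q[H]}(\mathcal{D}_G, \Q)$. The strategy is to compare $H$ with $G$ through a relative chain complex whose rational homology vanishes, and to use that $G$ is a Lewin group to transfer this vanishing to $\mathcal{D}_G$-coefficients.

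\emph{Setup.} Since $\phi$ is non-trivial, $b_1(G) > 0$, so $G$ is a Lewin group by \cref{cor: admissible_Lewin}. Hence $\mathcal{D}_G = \mathcal{D}_{\Q[G]}$ is the universal division $\Q[G]$-ring of fractions, and for every finitely presented $\Q[G]$-module $L$
\[
\dim_{\mathcal{D}_G}(\mathcal{D}_G \otimes_{\Q[G]} L) \leq \dim_{\Q}(\Q \otimes_{\Q[G]} L).
\]
The manifolds $M$ and $N$ are aspherical: they are irreducible with infinite fundamental group and empty or toroidal boundary. Replace $N$ by the mapping cylinder of $f$, a finite CW-complex containing $M$ as a subcomplex. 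Let $\overline{N} \to N$ be the universal $G$-cover and $\overline{M}$ its restriction to $M$, which is the $G$-cover of $M$ induced by $f_*$. Consider the short exact sequence of finite free $\Q[G]$-complexes
\[
0 \to C_*(\overline{M}) \to C_*(\overline{N}) \to C_*(\overline{N}, \overline{M}) \to 0.
\]
Since $C_*(\overline{M}) = \Q[G] \otimes_{\Q[H]} C_*(\widetilde{M})$ and $M$ is aspherical, applying $\mathcal{D}_G \otimes_{\Q[G]} -$ to the first complex computes $\Tor_*^{\Q[H]}(\mathcal{D}_G, \Q)$. Applying the same functor to the second complex computes $H_*(G; \mathcal{D}_G)$, whose dimensions are the $L^2$-Betti numbers of $N$.

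\emph{Key steps.} First, the rational homology isomorphism gives $H_n(\Q \otimes_{\Q[G]} C_*(\overline{N}, \overline{M})) = H_n(N, M; \Q) = 0$ for all $n$. Second, we deduce the same vanishing with $\mathcal{D}_G$-coefficients, in the form of \labelcref{eqtn: ineq_intro}: for every finite free $\Q[G]$-complex $C_*$ with rationally vanishing $n$th homology, the $n$th $\mathcal{D}_G$-homology vanishes. Third, the long exact sequence gives
\[
0 = H_2(N, M; \mathcal{D}_G) \to H_1(M; \mathcal{D}_G) \to H_1(N; \mathcal{D}_G).
\]
So $b_1^{\mathcal{D}_G}(H) \leq b_1^{(2)}(N)$. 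This is $0$ by the vanishing of $L^2$-Betti numbers of admissible $3$-manifolds (Lott--L\"uck), which is available since $G$ satisfies the Strong Atiyah Conjecture.

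\emph{Main obstacle.} The hard step is the second one: passing from the module-level inequality for finitely presented modules to vanishing of homology for complexes. We would first try the reduction of Friedl--L\"uck and Jaikin-Zapirain. Apply the dimension inequality to the cokernels $L_k = \operatorname{coker}(\partial_{k+1})$ of the truncated complex. Combine this with the Euler characteristic identity: the alternating sum of ranks of the chain modules is the same over $\Q$ and over $\mathcal{D}_G$. Rational acyclicity of $C_*(\overline{N},\overline{M})$ makes that common Euler characteristic zero. The inequalities $\dim_{\mathcal{D}_G}(\mathcal{D}_G \otimes L_k) \leq \dim_\Q(\Q \otimes L_k)$, applied degree by degree, should then force each $\mathcal{D}_G$-homology group to have dimension at most its rational counterpart, which is $0$. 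If the degree-by-degree bookkeeping does not close, the fallback is to cite the statement directly: for universal (Lewin) division rings, Jaikin-Zapirain's results give exactly \labelcref{eqtn: ineq_intro}, which is what the introduction indicates.
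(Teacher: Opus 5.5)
Your proof is correct, and it takes a genuinely different route from the paper.

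The paper never uses chain complexes of spaces. It tensors the augmentation sequence $0\to I_{\Q[H]}\to\Q[H]\to\Q\to 0$ up to $\Q[G]$. It then proves a factorisation claim: the map $H_2(H;\Q)\to H_2(G;\Q)$ factors through $\Tor_1^{\Q[G]}(\Q,\Q[G]\otimes_{\Q[H]}I_{\Q[H]})$. From this it deduces that the single module $T=\Tor_1^{\Q[H]}(\Q[G],\Q)$ satisfies $\Q\otimes_{\Q[G]}T=0$. It invokes Bieri to know that $T$ is finitely presented and applies universality to $T$ alone. It finishes with the count $\dim_{\mathcal{D}_G}(\mathcal{D}_G\otimes_{\Q[H]}I_{\Q[H]})=1+b_1^{(2)}(G)=1$.

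You instead apply universality to every differential of the finite free relative complex of the mapping cylinder. This is essentially the Friedl--L\"uck mechanism \labelcref{eqtn: ineq_intro}, now made available by the Lewin property.

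Your \emph{main obstacle} is in fact immediate. Universality applied to $\operatorname{coker}(\partial_k)$ gives $\operatorname{rk}_{\mathcal{D}_G}(\partial_k)\geq\operatorname{rk}_{\Q}(\partial_k)$ for every differential. Over either division ring, $\dim H_k=c_k-\operatorname{rk}\partial_k-\operatorname{rk}\partial_{k+1}$. So $\dim_{\mathcal{D}_G}H_k\leq\dim_{\Q}H_k$ holds degree by degree, and neither the Euler characteristic identity nor the fallback citation is needed.

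What your route buys:
\begin{itemize}
\item Finite presentation of the relevant modules is automatic, since they are cokernels of matrices, so there is no appeal to Bieri.
\item The factorisation claim disappears.
\item The rational homology hypothesis is used directly on spaces; you only need $H_2(N,M;\Q)=0$.
\item As a bonus, $H_1(N,M;\mathcal{D}_G)=0$ shows that $H_1(M;\mathcal{D}_G)\to H_1(N;\mathcal{D}_G)$ is an isomorphism.
\end{itemize}

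What the paper's route buys: it stays purely algebraic at the level of group rings and needs no cellular model of $f$. It also isolates the one explicit module $T$ that measures the discrepancy between $H$ and $G$.
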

\begin{proof}
	First, observe that since $\phi$ is non-trivial, then $b_1(G)$ is non-zero. Thus, by \cref{cor: admissible_Lewin} $G$ is a Lewin group. Now, consider the short exact sequence given by the augmentation map
	\[
		0 \to I_{\Q[H]} \to \Q[H] \to \Q \to 0.
	\]
	Note that $\pi_1(f)$ induces a natural map $\pi \colon \Q[H] \to \Q[G]$. Changing scalars to $\Q[G]$ yields the short exact sequence
	\begin{equation} \label{eqtn: augmentation}
		0 \to \Tor_1^{\Q[H]}(\Q[G], \Q) \to \Q[G] \otimes_{\Q[H]} I_{\Q[H]} \to I_{\Q[G]} \to 0
	\end{equation}
	where $g \otimes (h -1)$ is mapped to $g(\pi_1(f)(h) - 1)$.

	\begin{claim} \label{claim: fact}
		The map on the second rational homology factors through the module $\Tor_1^{\Q[G]}(\Q, \Q[G] \otimes_{\Q[H]} I_{\Q[H]})$, that is, the following diagram commutes
		\[
			\begin{tikzcd}
				 \Tor_2^{\Q[H]}(\Q, \Q) \arrow[rr] \arrow[dr] &  & \Tor_2^{\Q[G]}(\Q, \Q) \\ & \Tor_1^{\Q[G]}(\Q, \Q[G] \otimes_{\Q[H]} I_{\Q[H]}) \arrow[ur] &
			\end{tikzcd}.
		\]
	\end{claim}
	\begin{proof}
		Let $\pi_{\alpha} \colon \Q[H]^{\alpha} \to \Q[G]^{\alpha}$ be the map defined by applying $\pi$ componentwise. Consider a presentation of $I_{\Q[H]}$ given by the exact sequence of $\Q[H]$-modules
		\[
			0 \to L \xrightarrow{\iota} \Q[H]^{\alpha} \to I_{\Q[H]} \to 0.
		\]
		Note that $\pi_{\alpha}$ induces the exact sequence of $\Q[G]$-modules 
		\[
			0 \to J \to \Q[G]^{\alpha} \to I_{\Q[G]} \to 0
		\]
		where $\pi_{\alpha}(L) \subseteq J$. Thus, the module $\Tor_2^{\Q[H]}(\Q, \Q)$ is the kernel of the map given by change of scalars to $\Q$ on the presentation of $I_{\Q[H]}$
		\[
			\Q \otimes_{\Q[H]} L \to \Q \otimes_{\Q[H]} \Q[H]^{\alpha}.
		\]
		Similarly, $\Tor_2^{\Q[G]}(\Q, \Q)$ is the kernel of the map given by change of scalars to $\Q$ on the presentation of $I_{\Q[G]}$
		\[
			\Q \otimes_{\Q[G]} J \to \Q \otimes_{\Q[G]} \Q[G]^{\alpha}.
		\]
		Hence, the natural map from $\Tor_2^{\Q[H]}(\Q, \Q)$ to $\Tor_2^{\Q[G]}(\Q, \Q)$ is given by restriction of the map 
		\[
			\Q \otimes_{\Q[H]} L \to \Q \otimes_{\Q[G]} J
		\]
		that sends $1 \otimes l$ to $1 \otimes \pi_{\alpha}(l)$. But this map clearly factors through the module $\Q \otimes_{\Q[G]} \pi_{\alpha}(L)$, and so, we only need to show that the kernel of the map 
		\[
			\Q \otimes_{\Q[G]} \pi_{\alpha}(L) \to \Q \otimes_{\Q[G]} \Q[G]^{\alpha}
		\]
		is $\Tor_1^{\Q[G]}(\Q, \Q[G] \otimes_{\Q[H]} I_{\Q[H]})$.
		
		Now, from a change of scalars to $\Q[G]$ on the presentation of $I_{\Q[H]}$ we get the exact sequence of $\Q[G]$-modules
		\[
			0 \to \Q[G]L \to \Q[G] \otimes_{\Q[H]} \Q[H]^{\alpha} \to\Q[G] \otimes_{\Q[H]} I_{\Q[H]} \to 0 
		\]
		where $\Q[G] L$ stands for the image of the map $1 \otimes \iota$ from $\Q[G] \otimes_{\Q[H]} L$ to $\Q[G] \otimes_{\Q[H]} \Q[H]^{\alpha}$. Note that $\Q[G] \otimes_{\Q[H]} \Q[H]^{\alpha}$ is isomorphic to $\Q[G]^{\alpha}$ via the map $\pi_{\alpha}$, and moreover, under this isomorphism $\Q[G]L$ becomes identified with $\pi_{\alpha}(L)$. Thus, we have the following presentation of $\Q[G] \otimes_{\Q[H]} I_{\Q[H]}$
		\[
			0 \to \pi_{\alpha}(L) \to \Q[G]^{\alpha} \to\Q[G] \otimes_{\Q[H]} I_{\Q[H]} \to 0
		\]
		which shows that $\Tor_1^{\Q[G]}(\Q, \Q[G] \otimes_{\Q[H]} I_{\Q[H]})$ is the kernel of the map
		\[
			\Q \otimes_{\Q[G]} \pi_{\alpha}(L) \to \Q \otimes_{\Q[G]} \Q[G]^{\alpha}.
		\]
		 \renewcommand \qedsymbol{$\diamond$} \qedhere
	\end{proof}
	Next, a change of scalars to $\Q$ on sequence \labelcref{eqtn: augmentation} yields the exact sequence
	\begin{equation}\label{eqtn: long_exact_seq}
		\begin{aligned}
			&\Tor_1^{\Q[G]}(\Q, \Q[G] \otimes_{\Q[H]} I_{\Q[H]}) \to \Tor_2^{\Q[G]}(\Q, \Q) \to \\
			&\hspace{2cm} \Q \otimes_{\Q[G]} \Tor_1^{\Q[H]}(\Q[G], \Q) \to \Q \otimes_{\Q[H]} I_{\Q[H]} \to \\
			&\hspace{8cm} \Q \otimes_{\Q[G]} I_{\Q[G]} \to 0.
		\end{aligned}
	\end{equation}
	By assumption $f_{*} \colon H_n(M; \Q) \to H_n(N; \Q)$ is an isomorphism for every $n \geq 0$. In particular, the natural maps from $\Tor_i^{\Q[H]}(\Q, \Q)$ to $\Tor_i^{\Q[G]}(\Q, \Q)$ for $i = 1,2$ are isomorphisms. Hence, the map
	\[
		\Q \otimes_{\Q[H]} I_{\Q[H]} \to \Q \otimes_{\Q[G]} I_{\Q[G]}
	\]
	is an isomorphism, and the map
	\[
		\Tor_1^{\Q[G]}(\Q, \Q[G] \otimes_{\Q[H]} I_{\Q[H]}) \to \Tor_2^{\Q[G]}(\Q, \Q)
	\]
	is surjective by \cref{claim: fact}. Thus, we conclude from sequence \labelcref{eqtn: long_exact_seq} that 
	\[
		\dim_{\Q} \left( \Q \otimes_{\Q[G]} \Tor_1^{\Q[H]}(\Q[G], \Q) \right) = 0.
	\] 
	Moreover, both $H$ and $G$ are groups of finite type, so $\Tor_1^{\Q[H]}(\Q[G], \Q)$ is a finitely presented $\Q[G]$-module by \cite[Proposition 1.4]{Bieri_Book} (applied on sequence \labelcref{eqtn: augmentation}).  Now, $G$ is a Lewin group, and hence $\mathcal{D}_G$ is the universal division $\Q[G]$-ring of fractions (see \cref{sec: L2Betti}). Thus, we get that
	\[
		\dim_{\mathcal{D}_G} \left( \mathcal{D}_G \otimes_{\Q[G]} \Tor_1^{\Q[H]}(\Q[G], \Q) \right) = 0,
	\]
	and so, extending scalars to $\mathcal{D}_G$ on sequence \labelcref{eqtn: augmentation} we have that
	\[
		0 \to \mathcal{D}_G \otimes_{\Q[H]} I_{\Q[H]} \to \mathcal{D}_G \otimes_{\Q[G]} I_{\Q[G]} \to 0.
	\]
	In particular,
	\[
		\dim_{\mathcal{D}_G}(\mathcal{D}_G \otimes_{\Q[H]} I_{\Q[H]}) = \dim_{\mathcal{D}_G}(\mathcal{D}_G \otimes_{\Q[G]} I_{\Q[G]}) = 1 + b_1^{(2)}(G) = 1
	\]
	since $b_1^{(2)}(G)$ vanishes by \cite[Lemma 5.4(2)]{FriedlLuck_euler}. Finally, by changing scalars to $\mathcal{D}_G$ on the augmentation map of $H$ we get
	\[
		0 \to \Tor_1^{\Q[H]}(\mathcal{D}_G, \Q) \to \mathcal{D}_G \otimes_{\Q[H]} I_{\Q[H]} \to \mathcal{D}_G \to \mathcal{D}_G \otimes_{\Q[H]} \Q \to 0.
	\]
	Therefore, since $\mathcal{D}_G \otimes_{\Q[H]} \Q$ is trivial, we conclude that 
	\[
		b_1^{\mathcal{D}_G}(H) = \dim_{\mathcal{D}_G} \left( \Tor_1^{\Q[H]}(\mathcal{D}_G, \Q)\right) = 0
	\]
	as we wanted.
\end{proof}

The following result is a combination of \cite[Theorems 4.1 \& 5.5]{FriedlLuck_euler} using equalities \labelcref{eqtn: twisted_L2_Euler,eqtn: Thurston_beta1} and the fact that every chain of group epimorphisms of the form $\Z \to G \to \Z$ is actually a chain of isomorphisms.

\begin{thm} \label{thm: L2Betti_quotient}
	Let $M$ be an admissible $3$-manifold and denote by $H$ its fundamental group. Let $G$ be a torsion-free group satisfying the Strong Atiyah Conjecture. Consider two group epimorphisms $\mu \colon H \to G$ and $\phi_1 \colon G \to \Z$. Denote by $U_1$ and $U_2$ the kernel of $\phi_1$ and $\phi_2 := \phi_1 \circ \mu$, respectively. Assume that $U_1$ is non-trivial and that
	\[
		b_1^{\mathcal{D}_G}(H) = 0.
	\]
	Then $b_1^{\mathcal{D}_{U_1}}(U_2)$ is finite and 
	\[
		b_1^{\mathcal{D}_{U_1}}(U_2) \leq b_1^{(2)}(U_2).
	\]
\end{thm}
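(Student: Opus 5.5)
The plan is to translate everything into the $(\mu,\phi)$-$L^2$-Euler characteristic language of Friedl and L\"uck and then read off the inequality from \cite[Theorems 4.1 \& 5.5]{FriedlLuck_euler}, using \labelcref{eqtn: twisted_L2_Euler} and \labelcref{eqtn: Thurston_beta1} as the dictionary. Concretely, take $X = M$, with $\mu \colon H \to G$ and $\phi_1 \colon G \to \Z$ as in the statement. Since $\mu$ and $\phi_1$ are epimorphisms, $\phi_2 = \phi_1\circ\mu$ is surjective, so $m_{\phi_1} = 1$. Because $U_1 \leqslant G$ is torsion-free and satisfies the Strong Atiyah Conjecture (subgroups inherit it), \labelcref{eqtn: twisted_L2_Euler} gives
\[
\chi^{(2)}(M;\mu,\phi_1) = \sum_{n\ge 0} (-1)^n b_n^{\mathcal{D}_{U_1}}(U_2),
\]
whenever the left side is defined. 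Likewise, taking $\mu = \id_H$ and $\phi = \phi_2$, \labelcref{eqtn: Thurston_beta1} gives $\chi^{(2)}(M;\id,\phi_2) = b_1^{(2)}(U_2) = x_M(\phi_2)$.

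Step one is to turn the hypothesis $b_1^{\mathcal{D}_G}(H) = 0$ into the hypothesis of \cite[Theorem 5.5]{FriedlLuck_euler}, which requires the $L^2$-Betti numbers of the $G$-cover $\overline{M}$ to vanish. Since $M$ is admissible it is aspherical, so $b_n^{\mathcal{D}_G}(H)$ computes $\dim_{\VN(G)} H_n(\VN(G)\otimes_{\Z[H]} C_*(\widetilde M))$. In degree $0$ this vanishes because $G$ is infinite. In degree $1$ it vanishes by hypothesis. Degree $2$ follows from an Euler characteristic count, since $\chi(M) = 0$ (the boundary consists of tori or is empty), while degree $3$ vanishes by Poincar\'e duality or because there are no relevant $3$-cells when $\partial M \neq \emptyset$. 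Theorem 5.5 then says that $\chi^{(2)}(M;\mu,\phi_1)$ is defined and bounded above by $x_M(\phi_2)$. Theorem 4.1 in turn supplies finiteness of the twisted Betti numbers and the bound on the sum. This yields finiteness of $b_1^{\mathcal{D}_{U_1}}(U_2)$.

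Step two is to isolate the first Betti number. Here $b_0^{\mathcal{D}_{U_1}}(U_2) = 0$ since $U_1$ is non-trivial, and hence infinite because $G$ is torsion-free. For $b_n$ with $n \ge 2$, $U_2$ is the fundamental group of the infinite cyclic cover of the aspherical $M$, so it has $\cd \le 2$. I then need $b_2^{\mathcal{D}_{U_1}}(U_2) = 0$. This should come from Theorem 4.1/5.5 of Friedl--L\"uck, which shows the twisted chain complex is concentrated in degree $1$ under the vanishing hypothesis. Alternatively, one can use that $U_2$ is locally free-by-cyclic with $\cd\le2$, and that a $\mathcal{D}$-valued $b_2$ of a finitely generated kernel of $\phi_2$ vanishes. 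The identity $\chi^{(2)}(M;\mu,\phi_1) = -b_1^{\mathcal{D}_{U_1}}(U_2)$ then holds up to Friedl--L\"uck's sign convention, in which $x_M$ equals $-\chi^{(2)}$. Combining with Step one gives $b_1^{\mathcal{D}_{U_1}}(U_2) \le x_M(\phi_2) = b_1^{(2)}(U_2)$.

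The main obstacle will be Step two: the degree-$2$ vanishing, together with correctly matching the normalizations and sign conventions between Friedl--L\"uck's theorems and the formulas \labelcref{eqtn: twisted_L2_Euler,eqtn: Thurston_beta1}. The remark that a chain of epimorphisms $\Z\to G\to\Z$ consists of isomorphisms is needed to exclude the degenerate case where $U_1$ is trivial but $G \cong \Z$. There Friedl--L\"uck's theorem has a separate clause, and the hypothesis that $U_1$ is non-trivial excludes it.
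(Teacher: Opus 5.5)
Your proposal follows the paper's argument: the paper likewise obtains the statement by combining \cite[Theorems 4.1 \& 5.5]{FriedlLuck_euler} with the dictionary \labelcref{eqtn: twisted_L2_Euler,eqtn: Thurston_beta1}, up to the sign convention $x_M=-\chi^{(2)}$ that you note. Your added bookkeeping is correct: $b_0=0$ from $U_1\neq 1$ and the surjectivity of $U_2\to U_1$, $b_{\geq 3}=0$ from $\cd(U_2)\leq 2$, and the upgrade of $b_1^{\mathcal{D}_G}(H)=0$ to $L^2$-acyclicity via $\chi(M)=0$.

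There are two small corrections. First, the fact about $\Z\to G\to\Z$ serves to rule out $M\cong S^1\times D^2$, i.e.\ $H\cong\Z$, which would force $U_1=1$; that is the case Friedl--L\"uck exclude. Second, your ``alternative'' route to $b_2^{\mathcal{D}_{U_1}}(U_2)=0$ through the local structure of $U_2$ does not work as stated, because the coefficients enter through the non-injective map $U_2\to U_1$. Rely on Friedl--L\"uck here instead, or argue directly that the homology over $\mathcal{D}_{U_1}[t^{\pm1}]$ is torsion, which kills $H_2$ for a $2$-dimensional spine and, by duality, in the closed case.
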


We are now ready to show the main result.

\begin{proof}[Proof of \cref{thm: ineq_thurston3}]
	We follow the proof of \cite[Theorem 7.4]{FriedlLuck_euler}. Since seminorms are continuous and homogeneous it suffices to prove the statement for all primitive classes $\phi \in H^1(N;\Z)$. In particular, both $\phi$ and $f^{*} \phi$ are surjective. Denote by $H$ and $G$ the fundamental group of $M$ and $N$, respectively, and by $U_1$ and $U_2$ the kernel of $\phi$ and $f^{*} \phi$, respectively. By equality \labelcref{eqtn: Thurston_beta1}
	\[
		x_N(\phi) = b_1^{(2)}(U_1) \quad \mbox{and} \quad x_M(f^{*} \phi) = b_1^{(2)}(U_2).
	\]
	Thus, it suffices to show that
	\[
		 b_1^{(2)}(U_1) \leq b_1^{(2)}(U_2).
	\]
	We assume then that $U_1$ is non-trivial. Consider the natural surjection
	\[
		\Q[U_1] \otimes_{\Q[U_2]} I_{\Q[U_2]} \twoheadrightarrow I_{\Q[U_1]}.
	\]
	Taking dimensions over $\mathcal{D}_{U_1}$ yields
	\begin{equation} \label{eqn: phi_Tor1}
		b_1^{(2)}(U_1) \leq \dim_{\mathcal{D}_{U_1}}\left( \Tor_1^{\Q[U_2]}(\mathcal{D}_{U_1}, \Q)\right) = b_1^{\mathcal{D}_{U_1}}(U_2).
	\end{equation}
	Now, by \cref{thm: L2Euler_inequality} $b_1^{\mathcal{D}_G}(H)$ vanishes. Hence, from \cref{thm: L2Betti_quotient} the right term of inequality \labelcref{eqn: phi_Tor1} is finite, and moreover,
	\begin{equation} \label{eqn: Tor1_fphi}
		b_1^{\mathcal{D}_{U_1}}(U_2) \leq b_1^{(2)}(U_2).
	\end{equation}
	Therefore, combining inequalities \labelcref{eqn: phi_Tor1,eqn: Tor1_fphi}, we conclude the proof.
\end{proof}

We end this article with the proof of  \cref{cor: Thurston_ball} and its applications. In \cite{Gabai_Foliations} Gabai showed that if $f \colon M \to N$ is a proper map of positive degree $d$ between two compact and orientable $3$-manifolds $M$ and $N$, then for every $\phi \in H^1(N;\R)$ one has $d \cdot x_M(f^{*} \phi) = x_N(\phi)$. In particular, under the assumptions of \cref{thm: ineq_thurston3} we get that the number of faces of the Thurston norm balls $B_{x_M}$ and $B_{x_N}$ agree. For a general map $f$, we give uniform bounds for the number of faces of $B_{x_N}$.

\begin{proof}[Proof of \cref{cor: Thurston_ball}]
	Thurston \cite[Theorem 2, p. 106 and first paragraph, p. 107]{Thurston_Norm} showed that
	for a connected, orientable and compact $3$-manifold $Y$ the dual polytope $T(Y)^{*}$ in $H_1(Y;\R)$ is integral. Moreover, according to \cite[Lemma 4.34 (5)]{FriedlLuck_l2torsion} for every character $\phi \in H^1(Y; \R)$ we have
	\[
		x_Y(\phi) = \max_{p \in T(Y)^{*}} \phi(p).
	\] 
	By \cref{thm: ineq_thurston3} for every $\phi \in H^1(N; \R)$
	\[
		x_M(f^{*} \phi ) \geq x_N(\phi)
	\]
	where $f^{*} \phi$ is the pullback of $\phi$ with $f$, and hence
	\begin{equation} \label{eqtn: thickness}
		\max_{p \in T(N)^{*}} \phi(p) \leq \max_{p \in T(M)^{*}} \phi \circ f_1 (p) = \max_{q \in f_{1}(T(M)^{*})} \phi(q).
	\end{equation}
	We claim that $T(N)^{*} \subseteq f_{1}(T(M)^{*})$. Indeed, assume $p \notin f_{1}(T(M)^{*})$; then, since $f_{1}(T(M)^{*})$ is closed and convex, the (finite dimensional) hyperplane separation theorem (see, for example, \cite[Theorem 7]{Tik_separation}) provides $\psi \in H^1(N;\R)$ and a real number $a$  such that 
	\[
	\psi(p) > a \quad \mbox{and} \quad \psi(q)< a \quad \mbox{for all $q \in f_{1}(T(M)^{*})$}.
	\]
	Hence, by inequality (\ref{eqtn: thickness}) $p \notin T(N)^{*}$ proving the claim. 
	
	Recall that by duality the top dimensional faces of $B_{x_N}$ are in correspondence with the vertices of $T(N)^{*}$. But $T(N)^{*}$ is an integral polytope contained in $f_{1}(T(M)^{*})$, so the number of vertices is at most the number of integral points contained in $f_{1}(T(M)^{*})$.  Finally, observe that $f_1$ is induced by the natural map between $H_1(M)_{\text{f}}$ and $H_1(N)_{\text{f}}$, so $f_{1}(T(M)^{*})$ and $T(M)^{*}$ contain exactly the same number of integral points, namely $F_{x_M}$.
\end{proof}

\begin{rem}
	Note that the bound on the number of vertices of the dual Thurston polytope also implies that the numbers of lower dimensional faces of the Thurston norm ball are bounded as well.
\end{rem}

Beyond its independent interest, we hope this result serves to give upper bounds on the number of faces of the Thurston norm ball for general subclasses of admissible $3$-manifolds. For instance, in \cite{Cigna_2bridge} it was shown that the Thurston norm ball of the complement of a $2$-bridge link has at most $8$ faces. In \cite[Example 8.6]{BridsonReid_link} Bridson and Reid exhibited a link with fundamental group $H$ that surjects onto the free group of rank $2$, and hence onto the fundamental group of a $2$-bridge link, and has $b_1(H) = 2$. Thus, \cref{cor: Thurston_ball} captures the uniform bound phenomenon.

In a different direction, given a compact and orientable $3$-manifold $M$, Thurston \cite{Thurston_Norm} showed that if $\phi \in H^1(M; \R)$ is an integral character induced by a fibration of $M$ over the circle, then $\phi$ lies in the cone of an open maximal face of $B_{x_M}$, and every other integral character lying in the same cone also comes from a fibration (thus it is consistent to speak about fibered faces of $B_{x_M}$). A year later, Bieri--Neumann--Strebel \cite{BNSinv87} introduced the geometric BNS-invariant $\Sigma(G)$ of a finitely generated group $G$. They proved that $\Sigma(\pi_1(M)) \subseteq H^1(M; \R) \setminus \{0\}$ coincides with the union of the cones of the fibered faces (with the origin removed). It is a classical result (see, for instance, \cite[Proposition A4.5]{Strebel_BNSnotes}) that if $H$ is a finitely generated group that maps onto $G$ with finitely generated kernel, then $\Sigma(G)$ is determined by the intersection of $\Sigma(H)$ with $H^1(G; \R)$. Hence, if $\Sigma(H)$ is further determined by an integral polytope (see the introduction of \cite{KielakBNSviaNewton} for a detailed list of examples), the number of connected components of $\Sigma(G)$ is bounded by an integer which only depends on the polytope of $H$. In this fashion, \cref{cor: Thurston_ball} shows that under these circumstances it is still possible to give uniform upper bounds even if the kernel is not finitely generated.

\begin{cor} \label{cor: sigma}
	Let $f \colon M \to N$ be a map between admissible $3$-manifolds which induces an epimorphism on the fundamental groups and an isomorphism $f_{*} \colon H_n(M; \Q) \to H_n(N; \Q)$ for every non-negative integer $n$. Then the number of connected components of $\Sigma(\pi_1(N))$ is bounded by $F_{x_M}$.
\end{cor}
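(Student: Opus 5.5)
The plan is to combine the Bieri--Neumann--Strebel description of $\Sigma$ for $3$-manifold groups with the face count from \cref{cor: Thurston_ball}. By \cite{BNSinv87}, $\Sigma(\pi_1(N)) \subseteq H^1(N;\R)\setminus\{0\}$ is the union of the open cones over the fibered faces of $B_{x_N}$, with the origin removed.

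First I will show that each connected component of $\Sigma(\pi_1(N))$ is exactly one such open cone. The open cones over distinct open top-dimensional faces of $B_{x_N}$ are pairwise disjoint. Each cone is an open convex subset of $H^1(N;\R)$ not containing $0$, hence connected. Each cone is also open, since $x_N$ is linear on it. A union of pairwise disjoint nonempty open connected sets has precisely these sets as its connected components. Therefore the number of components of $\Sigma(\pi_1(N))$ equals the number of fibered faces of $B_{x_N}$. In particular it is at most the number of top-dimensional faces of $B_{x_N}$.

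There is one degenerate case to handle. If $x_N$ is not a norm (for instance if $N$ is a graph manifold), then $B_{x_N}$ is unbounded. In that case I will work in the quotient of $H^1(N;\R)$ by the subspace $\{x_N = 0\}$, where the same cone-over-faces description holds. When $b_1(N)=0$, $\Sigma$ is empty and there is nothing to prove. The bound by the number of vertices of $T(N)^*$ is exactly what the proof of \cref{cor: Thurston_ball} used, so it still applies here.

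Finally, I will apply \cref{cor: Thurston_ball} to $f$, which bounds the number of top-dimensional faces of $B_{x_N}$ by $F_{x_M}$. This gives the claim. The only delicate point is the face/component correspondence in the degenerate seminorm case. I expect that to be routine once the fibered cones are identified via Thurston's theorem.
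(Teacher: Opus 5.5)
You follow the paper's own route. The paper gives no separate proof of this corollary; the paragraph before it combines the Bieri--Neumann--Strebel identification of $\Sigma(\pi_1(N))$ with the union of the open cones over the fibered faces of $B_{x_N}$ with \cref{cor: Thurston_ball}. Your observation that these cones are pairwise disjoint, open and convex, and hence are exactly the components, makes that step explicit. The argument is correct whenever $x_N\not\equiv 0$, including when $x_N$ has a non-trivial kernel and you pass to $H^1(N;\R)/\{x_N=0\}$.

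The gap is the case you call routine: $x_N\equiv 0$ with $b_1(N)\geq 1$.
\begin{itemize}
\item There the quotient is zero, $B_{x_N}=H^1(N;\R)$ has no proper faces, and $T(N)^{*}=\{0\}$.
\item If $N$ fibers, the relevant set is all of $H^1(N;\R)\setminus\{0\}$. This set is not convex, and it is disconnected when $b_1(N)=1$.
\end{itemize}
This breaks the statement itself, not just your proof. Take $M=N=S^1\times D^2$, or $M=N$ a Sol torus bundle, and $f=\id$. Then $x_M\equiv 0$, so $T(M)^{*}=\{0\}$ and $F_{x_M}=1$. But for a generator $\phi$ of $H^1(N;\Z)\cong\Z$, the kernel of $\phi$ is finitely generated (trivial, respectively $\Z^2$). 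Hence $\pm\phi\in\Sigma(\pi_1(N))$, and $\Sigma(\pi_1(N))=H^1(N;\R)\setminus\{0\}\cong\R\setminus\{0\}$ has two components.

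The paper's sketch has the same blind spot. You need the extra hypothesis $x_N\not\equiv 0$, under which your proof is complete. Alternatively, treat the degenerate case separately. One can check that if $x_N\equiv 0$ then $\Sigma(\pi_1(N))$ is either empty or all of $H^1(N;\R)\setminus\{0\}$. So $\max\{F_{x_M},2\}$ is a valid bound in general.
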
 

We record that in general one cannot provide such uniform bounds. In \cite{FriedlTillmann_BNSor} Friedl and Tillmann revisited Brown's algorithm \cite{Brown_BNS} to compute the BNS-invariant of certain two-generator one-relator groups. Their algorithm (the marked polytope is a group invariant cf. \cite{HennekeKielak_agrarian}) shows that the number of connected components of the BNS-invariant of a two-generator one-relator group can be arbitrarily large, and yet the free group on two generators maps onto all of them. We thus pose the following question.

\begin{q}
	Let $H$ and $G$ be two groups such that there is an epimorphism from $H$ onto $G$, and $\Sigma(H)$ has finitely many connected components. Under which assumptions the number of connected components of $\Sigma(G)$ is bounded above by a constant depending only on $H$?
\end{q}

\bibliographystyle{alpha}
\bibliography{bib}

@article {AgolLiu_Simon,
	AUTHOR = {Agol, Ian and Liu, Yi},
	TITLE = {Presentation length and {S}imon's conjecture},
	JOURNAL = {J. Amer. Math. Soc.},
	FJOURNAL = {Journal of the American Mathematical Society},
	VOLUME = {25},
	YEAR = {2012},
	NUMBER = {1},
	PAGES = {151--187},
	ISSN = {0894-0347,1088-6834},
	MRCLASS = {57M25 (57N10)},
	MRNUMBER = {2833481},
	MRREVIEWER = {Bruno\ P.\ Zimmermann},
	DOI = {10.1090/S0894-0347-2011-00711-X},
	URL = {https://doi.org/10.1090/S0894-0347-2011-00711-X},
}

@article {BartelsFarrellLuck_FJlattices,
	AUTHOR = {Bartels, A. and Farrell, F. T. and L\"uck, W.},
	TITLE = {The {F}arrell-{J}ones conjecture for cocompact lattices in
	virtually connected {L}ie groups},
	JOURNAL = {J. Amer. Math. Soc.},
	FJOURNAL = {Journal of the American Mathematical Society},
	VOLUME = {27},
	YEAR = {2014},
	NUMBER = {2},
	PAGES = {339--388},
	ISSN = {0894-0347,1088-6834},
	MRCLASS = {18F25 (19D50 19G24 22E40 55R40 57N99)},
	MRNUMBER = {3164984},
	MRREVIEWER = {Tyrone\ Crisp},
	DOI = {10.1090/S0894-0347-2014-00782-7},
	URL = {https://doi.org/10.1090/S0894-0347-2014-00782-7},
}

@article {Berberian82,
    AUTHOR = {Berberian, Sterling Khazaz},
     TITLE = {The maximal ring of quotients of a finite von {N}eumann algebra},
   JOURNAL = {Rocky Mountain J. Math.},
  FJOURNAL = {The Rocky Mountain Journal of Mathematics},
    VOLUME = {12},
      YEAR = {1982},
    NUMBER = {1},
     PAGES = {149--164},
      ISSN = {},
   MRCLASS = {16A08 (16A30 46L10)},
  MRNUMBER = {0649748},
MRREVIEWER = {David Handelman},
       DOI = {10.1216/RMJ-1982-12-1-149},
       URL = {https://projecteuclid.org/journals/rocky-mountain-journal-of-mathematics/volume-12/issue-1/The-maximal-ring-of-quotients-of-a-finite-Von-Neumann/10.1216/RMJ-1982-12-1-149.full},
}

@Book{Bieri_Book,
	author    = {Bieri, Robert},
	publisher = {Queen Mary College, Department of Pure Mathematics, London},
	title     = {Homological dimension of discrete groups.},
	year      = {1981},
	edition   = {Second},
	mrclass   = {20J05 (18G20 57P10)},
	mrnumber  = {715779},
	pages     = {iv+198},
}

@article {BNSinv87,
    AUTHOR = {Bieri, Robert and Neumann, Walter D. and Strebel, Ralph},
     TITLE = {A geometric invariant of discrete groups},
   JOURNAL = {Invent. Math.},
  FJOURNAL = {Inventiones Mathematicae},
    VOLUME = {90},
      YEAR = {1987},
    NUMBER = {3},
     PAGES = {451--477},
      ISSN = {0020-9910},
   MRCLASS = {20J05 (22E40 57R19)},
  MRNUMBER = {914846},
MRREVIEWER = {Peter H. Kropholler},
       DOI = {10.1007/BF01389175},
       URL = {https://doi.org/10.1007/BF01389175},
}

@article{BoileauKitanoNozaki_Genera, 
	title={On the genera of symmetric unions of knots}, 
	DOI={10.4153/S0008414X25101740}, 
	journal={Canadian Journal of Mathematics}, 
	author={Boileau, Michel and Kitano, Teruaki and Nozaki, Yuta}, 
	year={2025}, 
	pages={1–26},
	}

@article {BRW_orderable3gps,
	AUTHOR = {Boyer, Steven and Rolfsen, Dale and Wiest, Bert},
	TITLE = {Orderable 3-manifold groups},
	JOURNAL = {Ann. Inst. Fourier (Grenoble)},
	FJOURNAL = {Universit\'e{} de Grenoble. Annales de l'Institut Fourier},
	VOLUME = {55},
	YEAR = {2005},
	NUMBER = {1},
	PAGES = {243--288},
	ISSN = {0373-0956,1777-5310},
	MRCLASS = {57M05 (06F15 20F34 20F60 57M50)},
	MRNUMBER = {2141698},
	MRREVIEWER = {Stephen\ P.\ Humphries},
	DOI = {10.5802/aif.2098},
	URL = {https://doi.org/10.5802/aif.2098},
}

@article {BridsonReid_link,
	AUTHOR = {Bridson, Martin R. and Reid, Alan W.},
	TITLE = {Nilpotent completions of groups, {G}rothendieck pairs, and
	four problems of {B}aumslag},
	JOURNAL = {Int. Math. Res. Not. IMRN},
	FJOURNAL = {International Mathematics Research Notices. IMRN},
	YEAR = {2015},
	NUMBER = {8},
	PAGES = {2111--2140},
	ISSN = {1073-7928,1687-0247},
	MRCLASS = {20E26 (20F18)},
	MRNUMBER = {3344664},
	MRREVIEWER = {Francesco\ de Giovanni},
	DOI = {10.1093/imrn/rnt353},
	URL = {https://doi.org/10.1093/imrn/rnt353},
}

@article {Brown_BNS,
	AUTHOR = {Brown, Kenneth S.},
	TITLE = {Trees, valuations, and the {B}ieri-{N}eumann-{S}trebel
	invariant},
	JOURNAL = {Invent. Math.},
	FJOURNAL = {Inventiones Mathematicae},
	VOLUME = {90},
	YEAR = {1987},
	NUMBER = {3},
	PAGES = {479--504},
	ISSN = {0020-9910,1432-1297},
	MRCLASS = {20F05 (20E06 22E40)},
	MRNUMBER = {914847},
	MRREVIEWER = {D.\ J.\ Collins},
	DOI = {10.1007/BF01389176},
	URL = {https://doi.org/10.1007/BF01389176},
}

@book {Cav_Good,
	AUTHOR = {Cavendish, Will},
	TITLE = {Finite-{S}heeted {C}overing {S}paces and {S}olenoids over
	3-manifolds},
	NOTE = {Thesis (Ph.D.)--Princeton University},
	PUBLISHER = {ProQuest LLC, Ann Arbor, MI},
	YEAR = {2012},
	PAGES = {96},
	ISBN = {978-1267-54485-8},
	MRCLASS = {99-05},
	MRNUMBER = {3078440},
	URL =
	{http://gateway.proquest.com/openurl?url_ver=Z39.88-2004&rft_val_fmt=info:ofi/fmt:kev:mtx:dissertation&res_dat=xri:pqm&rft_dat=xri:pqdiss:3522387},
}

@article {Cigna_2bridge,
	AUTHOR = {Cigna, Alessandro V.},
	TITLE = {The {T}hurston norm of 2-bridge link complements},
	JOURNAL = {J. Knot Theory Ramifications},
	FJOURNAL = {Journal of Knot Theory and its Ramifications},
	VOLUME = {34},
	YEAR = {2025},
	NUMBER = {12},
	PAGES = {Paper No. 2550059, 34},
	ISSN = {0218-2165,1793-6527},
	MRCLASS = {57K10 (57K31)},
	MRNUMBER = {4966656},
	DOI = {10.1142/S0218216525500592},
	URL = {https://doi.org/10.1142/S0218216525500592},
}

@article {Cohn_Universality,
	AUTHOR = {Cohn, P. M.},
	TITLE = {The embedding of firs in skew fields},
	JOURNAL = {Proc. London Math. Soc. (3)},
	FJOURNAL = {Proceedings of the London Mathematical Society. Third Series},
	VOLUME = {23},
	YEAR = {1971},
	PAGES = {193--213},
	ISSN = {0024-6115,1460-244X},
	MRCLASS = {16A06},
	MRNUMBER = {297814},
	MRREVIEWER = {G.\ Renault},
	DOI = {10.1112/plms/s3-23.2.193},
	URL = {https://doi.org/10.1112/plms/s3-23.2.193},
}

@book{cohn06FIR,
  title={Free Ideal Rings and Localization in General Rings},
  author={Cohn, Paul M.},
  isbn={9780511542794},
  lccn={},
  series={New Mathematical Monographs (3)},
  url={https://doi.org/10.1017/CBO9780511542794},
  year={2006},
  publisher={Cambridge University Press}
}

@article {ElekSzabo04,
	AUTHOR = {Elek, G\'abor and Szab\'o, Endre},
	TITLE = {Sofic groups and direct finiteness},
	JOURNAL = {J. Algebra},
	FJOURNAL = {Journal of Algebra},
	VOLUME = {280},
	YEAR = {2004},
	NUMBER = {2},
	PAGES = {426--434},
	ISSN = {0021-8693,1090-266X},
	MRCLASS = {16S34 (16E50 20C07)},
	MRNUMBER = {2089244},
	MRREVIEWER = {Pere\ Ara},
	DOI = {10.1016/j.jalgebra.2004.06.023},
	URL = {https://doi.org/10.1016/j.jalgebra.2004.06.023},
}

@article {FriedlTillmann_BNSor,
	AUTHOR = {Friedl, Stefan and Tillmann, Stephan},
	TITLE = {Two-generator one-relator groups and marked polytopes},
	JOURNAL = {Ann. Inst. Fourier (Grenoble)},
	FJOURNAL = {Universit\'e{} de Grenoble. Annales de l'Institut Fourier},
	VOLUME = {70},
	YEAR = {2020},
	NUMBER = {2},
	PAGES = {831--879},
	ISSN = {0373-0956,1777-5310},
	MRCLASS = {20J05 (20F65 22E40 57M07)},
	MRNUMBER = {4105952},
	MRREVIEWER = {Robert\ Fitzgerald\ Morse},
	DOI = {10.5802/aif.3325},
	URL = {https://doi.org/10.5802/aif.3325},
}

@article {FriedlLuck_l2torsion,
    AUTHOR = {Friedl, Stefan and L\"uck, Wolfgang},
     TITLE = {Universal {$L^2$}-torsion, polytopes and applications to
              3-manifolds},
   JOURNAL = {Proc. Lond. Math. Soc. (3)},
  FJOURNAL = {Proceedings of the London Mathematical Society. Third Series},
    VOLUME = {114},
      YEAR = {2017},
    NUMBER = {6},
     PAGES = {1114--1151},
      ISSN = {0024-6115,1460-244X},
   MRCLASS = {57Q10 (19B28 20J05 57M27)},
  MRNUMBER = {3661347},
MRREVIEWER = {Fumihiro\ Ushitaki},
       DOI = {10.1112/plms.12035},
       URL = {https://doi.org/10.1112/plms.12035},
}

@article {FriedlLuck_euler,
    AUTHOR = {Friedl, Stefan and L\"{u}ck, Wolfgang},
     TITLE = {{$L^2$}-{E}uler characteristics and the {T}hurston norm},
   JOURNAL = {Proc. Lond. Math. Soc. (3)},
  FJOURNAL = {Proceedings of the London Mathematical Society. Third Series},
    VOLUME = {118},
      YEAR = {2019},
    NUMBER = {4},
     PAGES = {857--900},
      ISSN = {0024-6115},
   MRCLASS = {57M27 (22D25 58J52)},
  MRNUMBER = {3938714},
MRREVIEWER = {Nikhil Savale},
       DOI = {10.1112/plms.12202},
       URL = {https://doi-org.ezproxy-prd.bodleian.ox.ac.uk/10.1112/plms.12202},
}

@article {FriedlSchreveTillmann_ThurstonFox,
    AUTHOR = {Friedl, Stefan and Schreve, Kevin and Tillmann, Stephan},
     TITLE = {Thurston norm via {F}ox calculus},
   JOURNAL = {Geom. Topol.},
  FJOURNAL = {Geometry \& Topology},
    VOLUME = {21},
      YEAR = {2017},
    NUMBER = {6},
     PAGES = {3759--3784},
      ISSN = {1465-3060},
   MRCLASS = {57M27 (20J05 57M05 57R19)},
  MRNUMBER = {3693575},
MRREVIEWER = {Daniel Matei},
       DOI = {10.2140/gt.2017.21.3759},
       URL = {https://doi-org.ezproxy-prd.bodleian.ox.ac.uk/10.2140/gt.2017.21.3759},
}

@article {Gabai_Foliations,
	AUTHOR = {Gabai, David},
	TITLE = {Foliations and the topology of {$3$}-manifolds},
	JOURNAL = {J. Differential Geom.},
	FJOURNAL = {Journal of Differential Geometry},
	VOLUME = {18},
	YEAR = {1983},
	NUMBER = {3},
	PAGES = {445--503},
	ISSN = {0022-040X,1945-743X},
	MRCLASS = {57N10 (57R30)},
	MRNUMBER = {723813},
	MRREVIEWER = {Jean-Pierre\ Otal},
	URL = {http://projecteuclid.org/euclid.jdg/1214437784},
}

@article {Gabai_FoliationsIII,
	AUTHOR = {Gabai, David},
	TITLE = {Foliations and the topology of {$3$}-manifolds. {III}},
	JOURNAL = {J. Differential Geom.},
	FJOURNAL = {Journal of Differential Geometry},
	VOLUME = {26},
	YEAR = {1987},
	NUMBER = {3},
	PAGES = {479--536},
	ISSN = {0022-040X,1945-743X},
	MRCLASS = {57N10 (57R30)},
	MRNUMBER = {910018},
	MRREVIEWER = {Jean-Pierre\ Otal},
	URL = {http://projecteuclid.org/euclid.jdg/1214441488},
}

@article {Grater20,
    AUTHOR = {Gr{\"a}ter, Joachim},
     TITLE = {Free division rings of fractions of crossed products of groups
              with {C}onradian left-orders},
   JOURNAL = {Forum Math.},
  FJOURNAL = {Forum Mathematicum},
    VOLUME = {32},
      YEAR = {2020},
    NUMBER = {3},
     PAGES = {739--772},
      ISSN = {0933-7741},
   MRCLASS = {16S35 (12E15 16S34 16S85 16W60 20F60)},
  MRNUMBER = {4095506},
       DOI = {10.1515/forum-2019-0264},
       URL = {https://ezproxy-prd.bodleian.ox.ac.uk:2102/10.1515/forum-2019-0264},
}

@article {GordonLuecke_Knots,
	AUTHOR = {Gordon, C. McA. and Luecke, J.},
	TITLE = {Knots are determined by their complements},
	JOURNAL = {J. Amer. Math. Soc.},
	FJOURNAL = {Journal of the American Mathematical Society},
	VOLUME = {2},
	YEAR = {1989},
	NUMBER = {2},
	PAGES = {371--415},
	ISSN = {0894-0347,1088-6834},
	MRCLASS = {57M25 (57M40)},
	MRNUMBER = {965210},
	MRREVIEWER = {Martin\ Scharlemann},
	DOI = {10.2307/1990979},
	URL = {https://doi.org/10.2307/1990979},
}

@article {HartleyMurasugi_Meridians,
	AUTHOR = {Hartley, Richard and Murasugi, Kunio},
	TITLE = {Homology invariants},
	JOURNAL = {Canadian J. Math.},
	FJOURNAL = {Canadian Journal of Mathematics. Journal Canadien de
	Math\'ematiques},
	VOLUME = {30},
	YEAR = {1978},
	NUMBER = {3},
	PAGES = {655--670},
	ISSN = {0008-414X,1496-4279},
	MRCLASS = {57M25 (57M10)},
	MRNUMBER = {491590},
	MRREVIEWER = {S.\ C.\ Althoen},
	DOI = {10.4153/CJM-1978-057-6},
	URL = {https://doi.org/10.4153/CJM-1978-057-6},
}

@incollection {Hempel_RF3mflds,
	AUTHOR = {Hempel, John},
	TITLE = {Residual finiteness for {$3$}-manifolds},
	BOOKTITLE = {Combinatorial group theory and topology ({A}lta, {U}tah,
	1984)},
	SERIES = {Ann. of Math. Stud.},
	VOLUME = {111},
	PAGES = {379--396},
	PUBLISHER = {Princeton Univ. Press, Princeton, NJ},
	YEAR = {1987},
	ISBN = {0-691-08409-2; 0-691-08410-6},
	MRCLASS = {57M05 (20E26 20F34 57N10)},
	MRNUMBER = {895623},
}

@article {HennekeLopez_pseudoSylvester,
    AUTHOR = {Henneke, Fabian and L\'{o}pez-\'{A}lvarez, Diego},
     TITLE = {Pseudo-{S}ylvester domains and skew {L}aurent polynomials over
              firs},
   JOURNAL = {J. Algebra Appl.},
  FJOURNAL = {Journal of Algebra and its Applications},
    VOLUME = {21},
      YEAR = {2022},
    NUMBER = {8},
     PAGES = {Paper No. 2250168, 28},
      ISSN = {0219-4988,1793-6829},
   MRCLASS = {16E60 (16E30 16K40 19A31)},
  MRNUMBER = {4469346},
MRREVIEWER = {Ivan\ D.\ Chipchakov},
       DOI = {10.1142/S0219498822501687},
       URL = {https://doi.org/10.1142/S0219498822501687},
}

@article {HennekeKielak_agrarian,
    AUTHOR = {Henneke, Fabian and Kielak, Dawid},
     TITLE = {Agrarian and {$L^2$}-invariants},
   JOURNAL = {Fund. Math.},
  FJOURNAL = {Fundamenta Mathematicae},
    VOLUME = {255},
      YEAR = {2021},
    NUMBER = {3},
     PAGES = {255--287},
      ISSN = {0016-2736},
   MRCLASS = {20J05 (12E15 16S35 20E06 57Q10)},
  MRNUMBER = {4324826},
MRREVIEWER = {Brita E. A. Nucinkis},
       DOI = {10.4064/fm808-4-2021},
       URL = {https://doi.org/10.4064/fm808-4-2021},
}

@article {HughesDivRings1970,
    AUTHOR = {Hughes, Ian},
     TITLE = {Division rings of fractions for group rings},
   JOURNAL = {Comm. Pure Appl. Math.},
  FJOURNAL = {Communications on Pure and Applied Mathematics},
    VOLUME = {23},
      YEAR = {1970},
     PAGES = {181--188},
      ISSN = {0010-3640},
   MRCLASS = {20.80},
  MRNUMBER = {263934},
MRREVIEWER = {Sudarshan K. Sehgal},
       DOI = {10.1002/cpa.3160230205},
       URL = {https://ezproxy-prd.bodleian.ox.ac.uk:2102/10.1002/cpa.3160230205},
}

@article {JaikinZapirain2020THEUO,
    AUTHOR = {Jaikin-Zapirain, Andrei},
     TITLE = {The universality of {H}ughes-free division rings},
   JOURNAL = {Selecta Math. (N.S.)},
  FJOURNAL = {Selecta Mathematica. New Series},
    VOLUME = {27},
      YEAR = {2021},
    NUMBER = {4},
     PAGES = {Paper No. 74, 33},
      ISSN = {1022-1824},
   MRCLASS = {16K40 (12E15 16S34 16S35 20F65)},
  MRNUMBER = {4292784},
MRREVIEWER = {Mai Hoang Bien},
       DOI = {10.1007/s00029-021-00691-w},
       URL = {https://ezproxy-prd.bodleian.ox.ac.uk:2102/10.1007/s00029-021-00691-w},
}

@article {JaikinLinton_oneRelCoherence,
	AUTHOR = {Jaikin-Zapirain, Andrei and Linton, Marco},
	TITLE = {On the coherence of one-relator groups and their group
	algebras},
	JOURNAL = {Ann. of Math. (2)},
	FJOURNAL = {Annals of Mathematics. Second Series},
	VOLUME = {201},
	YEAR = {2025},
	NUMBER = {3},
	PAGES = {909--959},
	ISSN = {0003-486X,1939-8980},
	MRCLASS = {20E07 (20E08 20J05)},
	MRNUMBER = {4899802},
	MRREVIEWER = {Stefan\ Kohl},
	DOI = {10.4007/annals.2025.201.3.4},
	URL = {https://doi.org/10.4007/annals.2025.201.3.4},
}

@article {JaikinLopez_Atiyah,
    AUTHOR = {Jaikin-Zapirain, Andrei and L\'{o}pez-{\'{A}}lvarez, Diego},
     TITLE = {The strong {A}tiyah and {L}\"{u}ck approximation conjectures for
              one-relator groups},
   JOURNAL = {Math. Ann.},
  FJOURNAL = {Mathematische Annalen},
    VOLUME = {376},
      YEAR = {2020},
    NUMBER = {3-4},
     PAGES = {1741--1793},
      ISSN = {0025-5831},
   MRCLASS = {20F05 (16K40 16S34)},
  MRNUMBER = {4081128},
MRREVIEWER = {Shoumin Liu},
       DOI = {10.1007/s00208-019-01926-0},
       URL = {https://doi.org/10.1007/s00208-019-01926-0},
}

@article {JaikinSouza_Sylvesterprop,
	AUTHOR = {Jaikin-Zapirain, Andrei and Souza, Henrique},
	TITLE = {Sylvester domains and pro-{$p$} groups},
	JOURNAL = {Doc. Math.},
	FJOURNAL = {Documenta Mathematica},
	VOLUME = {31},
	YEAR = {2026},
	NUMBER = {2},
	PAGES = {453--502},
	ISSN = {1431-0635,1431-0643},
	MRCLASS = {20E18 (16S35 16S85 20E22)},
	MRNUMBER = {5009504},
	DOI = {10.4171/dm/1034},
	URL = {https://doi.org/10.4171/dm/1034},
}

@book {Kaplansky72,
	AUTHOR = {Kaplansky, Irving},
	TITLE = {Fields and rings},
	SERIES = {Chicago Lectures in Mathematics},
	EDITION = {Second},
	PUBLISHER = {University of Chicago Press, Chicago, Ill.-London},
	YEAR = {1972},
	PAGES = {x+206},
	MRCLASS = {13-02 (12-02 16-02)},
	MRNUMBER = {349646},
}

@article {KielakBNSviaNewton,
    AUTHOR = {Kielak, Dawid},
     TITLE = {The {B}ieri-{N}eumann-{S}trebel invariants via {N}ewton
              polytopes},
   JOURNAL = {Invent. Math.},
  FJOURNAL = {Inventiones Mathematicae},
    VOLUME = {219},
      YEAR = {2020},
    NUMBER = {3},
     PAGES = {1009--1068},
      ISSN = {0020-9910},
   MRCLASS = {20F65},
  MRNUMBER = {4055183},
MRREVIEWER = {Igor Belegradek},
       DOI = {10.1007/s00222-019-00919-9},
       URL = {https://doi.org/10.1007/s00222-019-00919-9},
}

@article {KielakLinton_3mfldAtiyah,
    AUTHOR = {Kielak, Dawid and Linton, Marco},
     TITLE = {Group rings of three-manifold groups},
   JOURNAL = {Proc. Amer. Math. Soc.},
  FJOURNAL = {Proceedings of the American Mathematical Society},
    VOLUME = {152},
      YEAR = {2024},
    NUMBER = {5},
     PAGES = {1939--1946},
      ISSN = {0002-9939,1088-6826},
   MRCLASS = {20J05 (20F34 57K30)},
  MRNUMBER = {4728464},
       DOI = {10.1090/proc/16716},
       URL = {https://doi.org/10.1090/proc/16716},
}

@incollection {Kirby_Problems,
	AUTHOR = {Kirby, Rob},
	TITLE = {Problems in low dimensional manifold theory},
	BOOKTITLE = {Algebraic and geometric topology ({P}roc. {S}ympos. {P}ure
	{M}ath., {S}tanford {U}niv., {S}tanford, {C}alif., 1976),
	{P}art 2},
	SERIES = {Proc. Sympos. Pure Math.},
	VOLUME = {XXXII},
	PAGES = {273--312},
	PUBLISHER = {Amer. Math. Soc., Providence, RI},
	YEAR = {1978},
	ISBN = {0-8218-1433-8},
	MRCLASS = {57-02},
	MRNUMBER = {520548},
	MRREVIEWER = {Louis\ H.\ Kauffman},
}

@article {LinnellDivRings93,
    AUTHOR = {Linnell, Peter A.},
     TITLE = {Division rings and group von {N}eumann algebras},
   JOURNAL = {Forum Math.},
  FJOURNAL = {Forum Mathematicum},
    VOLUME = {5},
      YEAR = {1993},
    NUMBER = {6},
     PAGES = {561--576},
      ISSN = {0933-7741},
   MRCLASS = {20C07 (16K40 16S35 22D25 46L10)},
  MRNUMBER = {1242889},
MRREVIEWER = {Alain Valette},
       DOI = {10.1515/form.1993.5.561},
       URL = {https://ezproxy-prd.bodleian.ox.ac.uk:2102/10.1515/form.1993.5.561},
}

@incollection {LinnellZeroDivc,
    AUTHOR = {Linnell, Peter A.},
     TITLE = {Analytic versions of the zero divisor conjecture},
 BOOKTITLE = {Geometry and cohomology in group theory ({D}urham, 1994)},
    SERIES = {London Math. Soc. Lecture Note Ser.},
    VOLUME = {252},
     PAGES = {209--248},
 PUBLISHER = {Cambridge Univ. Press, Cambridge},
      YEAR = {1998},
      ISBN = {0-521-63556-X},
   MRCLASS = {20C07 (43A99 46L99 46M20)},
  MRNUMBER = {1709960},
       DOI = {10.1017/CBO9780511666131.015},
       URL = {https://doi.org/10.1017/CBO9780511666131.015},
}

@incollection {Linnell_localizations,
	AUTHOR = {Linnell, Peter A.},
	TITLE = {Noncommutative localization in group rings},
	BOOKTITLE = {Non-commutative localization in algebra and topology},
	SERIES = {London Math. Soc. Lecture Note Ser.},
	VOLUME = {330},
	PAGES = {40--59},
	PUBLISHER = {Cambridge Univ. Press, Cambridge},
	YEAR = {2006},
	ISBN = {978-0-521-68160-5; 0-521-68160-X},
	MRCLASS = {16S34 (16S10 20C07)},
	MRNUMBER = {2222481},
	MRREVIEWER = {Mykola\ Ya.\ Komarnitski\u i},
	DOI = {10.1017/CBO9780511526381.010},
	URL = {https://doi.org/10.1017/CBO9780511526381.010},
}

@book {Luck02,
    AUTHOR = {L{\"u}ck, Wolfgang},
     TITLE = {{$L\sp 2$}-invariants: theory and applications to geometry and
              {$K$}-theory},
  PUBLISHER = {Springer-Verlag},
   ADDRESS = {Berlin},
      YEAR = {2002},
     PAGES = {xvi+595},
      ISBN = {3-540-43566-2},
   MRCLASS = {58J22 (19K56 46L80 57Q10 57R20 58J52)},
  MRNUMBER = {2003m:58033},
MRREVIEWER = {Thomas Schick},
}

@book {Rotman09,
    AUTHOR = {Rotman, Joseph J.},
     TITLE = {An introduction to homological algebra},
  PUBLISHER = {Springer},
   ADDRESS = {New York},
      YEAR = {2009},
     PAGES = {xiv+709},
      ISBN = {978-0-387-24527-0},
   MRCLASS = {18Gxx (13Dxx 16Exx 18-01 20J06)},
  MRNUMBER = {2455920},
MRREVIEWER = {Fernando Muro},
}

@article {Roushon_FJ3mfld,
	AUTHOR = {Roushon, S. K.},
	TITLE = {The {F}arrell-{J}ones isomorphism conjecture for 3-manifold
	groups},
	JOURNAL = {J. K-Theory},
	FJOURNAL = {Journal of K-Theory. K-Theory and its Applications in Algebra,
	Geometry, Analysis \& Topology},
	VOLUME = {1},
	YEAR = {2008},
	NUMBER = {1},
	PAGES = {49--82},
	ISSN = {1865-2433,1865-5394},
	MRCLASS = {57N37 (19D35 19J10)},
	MRNUMBER = {2424566},
	DOI = {10.1017/is007011012jkt005},
	URL = {https://doi.org/10.1017/is007011012jkt005},
}

@article {SchickIntegrality,
    AUTHOR = {Schick, Thomas},
     TITLE = {Erratum: ``{I}ntegrality of {$L^2$}-{B}etti numbers''},
   JOURNAL = {Math. Ann.},
  FJOURNAL = {Mathematische Annalen},
    VOLUME = {322},
      YEAR = {2002},
    NUMBER = {2},
     PAGES = {421--422},
      ISSN = {0025-5831},
   MRCLASS = {55N25 (16S34 46L99 58J22)},
  MRNUMBER = {1894160},
MRREVIEWER = {Warren Dicks},
       DOI = {10.1007/s002080100282},
       URL = {https://doi.org/10.1007/s002080100282},
}

@article {Schreve_AtiyahVCS,
    AUTHOR = {Schreve, Kevin},
     TITLE = {The strong {A}tiyah conjecture for virtually cocompact special
              groups},
   JOURNAL = {Math. Ann.},
  FJOURNAL = {Mathematische Annalen},
    VOLUME = {359},
      YEAR = {2014},
    NUMBER = {3-4},
     PAGES = {629--636},
      ISSN = {0025-5831},
   MRCLASS = {20F65},
  MRNUMBER = {3231009},
MRREVIEWER = {Qin Wang},
       DOI = {10.1007/s00208-014-1007-9},
       URL = {https://doi.org/10.1007/s00208-014-1007-9},
}

@article {Scott_core,
    AUTHOR = {Scott, G. Peter},
     TITLE = {Compact submanifolds of {$3$}-manifolds},
   JOURNAL = {J. London Math. Soc. (2)},
  FJOURNAL = {Journal of the London Mathematical Society. Second Series},
    VOLUME = {7},
      YEAR = {1973},
     PAGES = {246--250},
      ISSN = {0024-6107},
   MRCLASS = {57A10},
  MRNUMBER = {326737},
MRREVIEWER = {C. D. Feustel},
       DOI = {10.1112/jlms/s2-7.2.246},
       URL = {https://doi-org.ezproxy-prd.bodleian.ox.ac.uk/10.1112/jlms/s2-7.2.246},
}

@article {Serre_CohomologyFI,
    AUTHOR = {Serre, Jean-Pierre},
     TITLE = {Cohomologie des groupes discrets},
   JOURNAL = {Ann. of Math. Studies},
  FJOURNAL = {Annals of Mathematics Studies},
    VOLUME = {70},
      YEAR = {1971},
    NUMBER = {},
     PAGES = {77--169},
      ISSN = {},
   MRCLASS = {22E40},
  MRNUMBER = {0385006},
MRREVIEWER = {H. Bass},
       DOI = {},
       URL = {},
}

@misc{Strebel_BNSnotes,
	title = {Notes on the Sigma invariants}, 
	author = {Strebel, Ralph},
	howpublished = "\url{https://arxiv.org/abs/1204.0214}",
	year = {2013},
	eprint = {1204.0214},
	archivePrefix = {arXiv},
	primaryClass = {math.GR}
}

@article {Swancd1,
    AUTHOR = {Swan, Richard G.},
     TITLE = {Groups of cohomological dimension one},
   JOURNAL = {J. Algebra},
  FJOURNAL = {Journal of Algebra},
    VOLUME = {12},
      YEAR = {1969},
     PAGES = {585--610},
      ISSN = {0021-8693},
   MRCLASS = {20.10},
  MRNUMBER = {240177},
MRREVIEWER = {L.\ Neuwirth},
       DOI = {10.1016/0021-8693(69)90030-1},
       URL = {https://doi.org/10.1016/0021-8693(69)90030-1},
}

@article {Thurston_Norm,
	AUTHOR = {Thurston, William P.},
	TITLE = {A norm for the homology of {$3$}-manifolds},
	JOURNAL = {Mem. Amer. Math. Soc.},
	FJOURNAL = {Memoirs of the American Mathematical Society},
	VOLUME = {59},
	YEAR = {1986},
	NUMBER = {339},
	PAGES = {i--vi and 99--130},
	ISSN = {0065-9266,1947-6221},
	MRCLASS = {57N10 (57M25 57R20 57R30)},
	MRNUMBER = {823443},
	MRREVIEWER = {Martin\ Scharlemann},
}

@incollection {Tik_separation,
	AUTHOR = {Tikhomirov, V. M.},
	TITLE = {Convex analysis},
	BOOKTITLE = {Current problems in mathematics. {F}undamental directions,
	{V}ol.\ 14 ({R}ussian)},
	SERIES = {Itogi Nauki i Tekhniki},
	PAGES = {5--101, 272},
	PUBLISHER = {Akad. Nauk SSSR, Vsesoyuz. Inst. Nauchn. i Tekhn. Inform.,
	Moscow},
	YEAR = {1987},
	MRCLASS = {49-02 (41A65 46N05 49-01 49A50 90C25)},
	MRNUMBER = {915773},
}

\end{document}